\theoremstyle{change} 
\newtheorem{theorem}{Theorem}[section] 
\newtheorem{lemma}[theorem]{Lemma} 
\newtheorem{proposition}[theorem]{Proposition}
\newtheorem{corollary}[theorem]{Corollary}
\newtheorem{remark}[theorem]{Remark}
\newtheorem{example}[theorem]{Example}
\newtheorem{definition}[theorem]{Definition}
\newtheorem{notation}[theorem]{Notation}
\newtheorem{nothing}[theorem]{} 
\newenvironment{proof}{\noindent{\bf Proof}\ }{\qed\bigskip}
\renewcommand{\le}{\leqslant}
\renewcommand{\ge}{\geqslant} 
\renewcommand{\leq}{\leqslant}
\renewcommand{\geq}{\geqslant}
\renewcommand{\unlhd}{\trianglelefteqslant}
\renewcommand{\marginpar}[1]{}
\newcommand{\Aut}{\mathrm{Aut}}
\newcommand{\Atilde}{\tilde{A}}
\newcommand{\BIGOP}[1]
  {\mathop{\mathchoice
  {\raise-0.22em\hbox{\huge $#1$}}
  {\raise-0.05em\hbox{\Large $#1$}}{\hbox{\large $#1$}}{#1}}}
\newcommand{\bigtimes}{\BIGOP{\times}}
\newcommand{\calC}{\mathcal{C}}
\newcommand{\calS}{\mathcal{S}}
\newcommand{\calT}{\mathcal{T}}
\newcommand{\catC}{\catfont{C}}
\newcommand{\catCtilde}{\tilde{\catC}}
\newcommand{\catB}{\catfont{B}}
\newcommand{\catD}{\catfont{D}}
\newcommand{\catfont}{\mathsf}
\newcommand{\etilde}{\tilde{e}}
\newcommand{\End}{\mathrm{End}}
\newcommand{\Hom}{\mathrm{Hom}}
\newcommand{\GLH}{\llap{\phantom{L}}_G L_H}
\newcommand{\GLMK}{\llap{\phantom{(L*M)}}_G (L*M)_K}
\newcommand{\gU}{\lexp{g}{U}}
\newcommand{\Hd}{\mathrm{Hd}}
\newcommand{\HMK}{\llap{\phantom{M}}_H M_K}
\newcommand{\HMG}{\llap{\phantom{M}}_HM_G}
\newcommand{\id}{\mathrm{id}}
\newcommand{\Inf}{\mathrm{Inf}}
\newcommand{\Inn}{\mathrm{Inn}}
\newcommand{\Mor}{\mathrm{Mor}}
\newcommand{\myiso}{\buildrel\sim\over\to}
\newcommand{\lexp}[2]{\setbox0=\hbox{$#2$} \setbox1=\vbox to
                 \ht0{}\,\box1^{#1}\!#2}
\newcommand{\link}{-}
\newcommand{\Ltilde}{\tilde{L}}
\newcommand{\liso}{\buildrel\sim\over\longrightarrow}
\newcommand{\Mat}{\mathrm{Mat}}
\newcommand{\Mtilde}{\tilde{M}}
\newcommand{\NN}{\mathbb{N}}
\newcommand{\Ob}{\mathrm{Ob}}
\newcommand{\Out}{\mathrm{Out}}
\newcommand{\PP}{\mathbb{P}}
\newcommand{\qed}{\nobreak\hfill
                   \vbox{\hrule\hbox{\vrule\hbox to 5pt
                   {\vbox to 8pt{\vfil}\hfil}\vrule}\hrule}}
\newcommand{\QQ}{\mathbb{Q}}
\newcommand{\Rad}{\mathrm{Rad}}
\newcommand{\scrA}{\mathscr{A}}
\newcommand{\scrD}{\mathscr{D}}
\newcommand{\scrP}{\mathscr{P}}
\newcommand{\scrS}{\mathscr{S}}
\newcommand{\scrX}{\mathscr{X}}
\newcommand{\scrY}{\mathscr{Y}}
\newcommand{\shat}{\hat{s}}
\newcommand{\stab}{\mathrm{stab}}
\newcommand{\scrStilde}{\tilde{\scrS}}
\newcommand{\startilde}{\tilde{*}}
\newcommand{\that}{\hat{t}}
\newcommand{\tr}{\mathrm{tr}}
\newcommand{\Ttilde}{\tilde{T}}
\newcommand{\Wtilde}{\tilde{W}}
\newcommand{\ZZ}{\mathbb{Z}}
\title{A Ghost Algebra of the Double Burnside Algebra in Characteristic Zero\footnote{{\bf MR Subject Classification:} 19A22, 20C20. {\bf Keywords:} Burnside ring, double Burnside ring, mark homomorphism, ghost ring, idempotent condensation, biset, biset functor, twisted category algebra}}
\author{\small Robert Boltje\\
  \small Department of Mathematics\\
  \small University of California\\
  \small Santa Cruz, CA 95064\\
  \small U.S.A.\\
  \small boltje@ucsc.edu
  \and
  \small Susanne Danz\\
  \small Department of Mathematics\\ 
  \small University of Kaiserslautern\\
  \small P.O. Box 3049\\
  \small 65653 Kaiserslautern\\
  \small Germany\\
  \small danz@mathematik.uni-kl.de}
\date{March 6, 2012\\ {\small revised: June 19, 2012}}
\begin{document}

\sloppy

\maketitle


\begin{abstract}
For a finite group $G$, we introduce a multiplication on the $\QQ$-vector space with basis $\scrS_{G\times G}$, the set of subgroups of $G\times G$. The resulting $\QQ$-algebra $\Atilde$ can be considered as a ghost algebra for the double Burnside ring $B(G,G)$ in the sense that the mark homomorphism from $B(G,G)$ to $\Atilde$ is a ring homomorphism. Our approach interprets $\QQ B(G,G)$ as an algebra $eAe$, where $A$ is a twisted monoid algebra and $e$ is an idempotent in $A$. The monoid underlying the algebra $A$ is again equal to $\scrS_{G\times G}$ with multiplication given by composition of relations (when a subgroup of $G\times G$ is interpreted as a relation between $G$ and $G$). The algebras $A$ and $\Atilde$ are isomorphic via M\"obius inversion in the poset $\scrS_{G\times G}$. As an application we improve results by Bouc on the parametrization of simple modules of $\QQ B(G,G)$ and also of simple biset functors, by using results of Linckelmann and Stolorz on the parametrization of simple modules of finite category algebras. Finally, in the case where $G$ is a cyclic group of order $n$, we give an explicit isomorphism between $\QQ B(G,G)$ and a direct product of matrix rings over group algebras of the automorphism groups of cyclic groups of order $k$, where $k$ divides $n$.
\end{abstract}


\section{Introduction}

The main goal of this paper is to find a ghost ring for the double Burnside 
ring $B(G,G)$ of a finite group $G$, as an analogue of the classical ghost ring
of the usual 
Burnside ring $B(G)$, and as a generalization of the results in \cite{BD11}, where 
this was achieved for the subring $B^\lhd(G,G)$ of $B(G,G)$, which is additively 
generated by the classes of left-free $(G,G)$-bisets. This goal is achieved over the field $\QQ$ of rational numbers.

\smallskip
Recall that the {\it Burnside ring} $B(G)$, the Grothendieck ring of the category of 
finite left $G$-sets with respect to disjoint unions and direct products, is 
embedded via the {\it mark homomorphism} $\rho_G :B(G)\to \prod_{U\le G}\ZZ$, 
$[X]\mapsto (|X^U|)_{U\le G}$, into a direct product of copies of $\ZZ$, also called the 
{\it ghost ring} of $B(G)$, a ring with a considerably simpler structure than $B(G)$ itself. Here, $X$ denotes a left $G$-set,
$[X]$ denotes its image in $B(G)$, and $X^U$ denotes the set of $U$-fixed points of $X$, for a subgroup $U$ of $G$. Every element in the image of $\rho_G$ is constant on conjugacy classes of subgroups of $G$. After tensoring with $\QQ$ one obtains a $\QQ$-algebra isomorphism 
\begin{equation*}
  \rho_G\colon \QQ B(G)\liso (\prod_{U\le G}\QQ)^G \subseteq \prod_{U\le G} \QQ
\end{equation*}
onto the subalgebra of $G$-fixed points under the conjugation action. This isomorphism has become one of the main tools to obtain answers to questions related to the ring structure of $B(G)$: for instance, Dress determined the primitive idempotents of $B(G)$, cf.~\cite{D}, and Yoshida characterized the units of $B(G)$, cf.~\cite{Yoshida}, using the mark homomorphism. 

\smallskip
The double Burnside ring $B(G,G)$ is the Grothendieck ring of the category of finite $(G,G)$-bisets with respect to disjoint unions and the tensor product of bisets over $G$. In contrast to $B(G)$, it is not commutative when $G\neq 1$. 
The Burnside ring and the double Burnside ring are linked through a natural embedding $B(G)\to B(G,G)$. The double Burnside ring and related constructions have led to the theory of biset functors, initiated by Bouc in \cite{BoucJA}, see also the more recent book \cite{BoucSLN}. This theory was the main tool in the determination of the Dade group, an invariant of a $p$-group that is important in the modular representation theory of finite groups. It was achieved in a sequence of papers by various authors, including 
Bouc, Th\'evenaz and Carlson, cf.~\cite{BoucThevenaz00} and \cite{Thevenaz}. Another application of biset functors was the complete determination of the unit group of $B(P)$ for a $p$-group $P$, cf.~\cite{Bouc2007}. Bisets also led to surprising and interesting invariants, called stabilizing bisets, of representations of finite groups over arbitrary commutative base rings, cf.~\cite{BoucThevenaz2010}. 
Moreover, there is a connection of the double Burnside ring with algebraic topology, cf.~\cite{MartinoPriddy}, and fusion systems, cf.~\cite{BLO1}, ~\cite{RS}, although these connections only use the subring $B^\lhd(G,G)$ of $B(G,G)$.

\smallskip
Given the renewed interest in the double Burnside ring, the potential for applications, and the usefulness of the ghost ring of $B(G)$, it seems desirable to determine a ghost algebra of $B(G,G)$ in the following sense. The double Burnside ring can, as an abelian group, be canonically identified with the Burnside group $B(G\times G)$, and this way one has the usual additive embedding via the mark homomorphism $\rho_{G,G}\colon B(G,G)\to \ZZ\scrS_{G\times G}$ into the free $\ZZ$-module over the set $\scrS_{G\times G}$ of subgroups of $G\times G$, so that one obtains an isomorphism of $\QQ$-vector spaces,
\begin{equation*}
  \rho_{G,G}\colon \QQ B(G,G)\liso (\QQ\scrS_{G\times G})^{G\times G}\subseteq \QQ\scrS_{G\times G}\,,
\end{equation*}
onto the subspace of $G\times G$-fixed points under the conjugation action of $G\times G$ on the set of its subgroups, $\scrS_{G\times G}$. The multiplication on $\QQ B(G,G)$ induces a unique algebra structure on the fixed point set that turns $\rho_{G,G}$ into a $\QQ$-algebra isomorphism. It is natural to ask what this multiplication on $(\QQ\scrS_{G\times G})^{G\times G}$ looks like. One can also ask whether there is a \lq natural\rq\ $\QQ$-algebra structure on the bigger vector space $\QQ\scrS_{G\times G}$ extending the multiplication on the fixed points. Of course, such an extension might not be unique. Using the (single) Burnside ring $B(G)$ and its mark homomorphism as a model, one would also hope that such a $\QQ$-algebra structure on $\QQ\scrS_{G\times G}$ should be simpler than the one of $\QQ B(G,G)$ itself. Also, it is reasonable to expect that this multiplication should involve the construction of \lq composition\rq, $L*M$, of two subgroups $L$ and $M$ of $G\times G$, given by 
\begin{equation*}
  L*M:=\{(x,y)\in G\times G\mid \exists g\in G\colon (x,g)\in L, (g,y)\in M\}\,,
\end{equation*}
which is used in the explicit description of the multiplication in $\QQ B(G,G)$ by a Theorem of Bouc, cf.~Proposition~\ref{prop Mackey}.

\smallskip
The key observation leading to answers to the above questions is that the additive map $\alpha_{G,G}\colon \QQ B(G,G)\to \QQ \scrS_{G\times G}$, $[X]\mapsto \sum_{x\in X}\stab_{G\times G}(x)$, is almost multiplicative when $\QQ\scrS_{G\times G}$ is viewed as monoid algebra over the monoid $(\scrS_{G\times G},*)$. This map is multiplicative if one introduces a $2$-cocycle on the monoid $\scrS_{G\times G}$ and uses the twisted monoid algebra structure on $\QQ\scrS_{G\times G}$. For the purpose of this introduction, we denote this algebra by $A$. The map $\alpha_{G,G}$ has the effect of \lq straightening out\rq\ the Mackey-type formula for the multiplication in $B(G,G)$, by omitting the summation over double cosets. Moreover, $\alpha_{G, G}$ is injective with image $eAe$ for a certain idempotent $e$ of $A$. Thus, the double Burnside algebra and its representation theory can be viewed as the result of idempotent condensation applied to a twisted monoid algebra. The $\QQ$-linear isomorphism $\zeta_{G,G}\colon\QQ\scrS_{G\times G}\liso \QQ\scrS_{G\times G}$, $L\mapsto \sum_{L'\le L}L'$, transports the twisted monoid algebra structure $A$ on $\QQ\scrS_{G\times G}$ to another algebra structure, which we denote by $\Atilde$ on the same $\QQ$-vector space. It turns out that the composition $\zeta_{G,G}\circ\alpha_{G,G}$ is equal to the mark homomorphism $\rho_{G,G}$ and we obtain a commutative diagram (omitting the indices of the maps)
\begin{diagram}[100]
  \QQ B(G,G) & \movearrow(0,0){\Ear[20]{\alpha}} & \movevertex(30,0){(\QQ\scrS_{G\times G})^{G\times G}=eAe} & 
                   \movearrow(50,0){\subseteq} & \movevertex(50,-5){\QQ\scrS_{G\times G} = A} &&
   & \seaR{\rho} & \movearrow(10,0){\saR{\zeta}} &            & \movearrow(30,0){\saR{\zeta}} &&
   &               & \movevertex(30,0){(\QQ\scrS_{G\times G})^{G\times G}=\etilde\Atilde\etilde} & \movevertex(50,0){\subseteq}  & 
                \movevertex(50,-5){\QQ\scrS_{G\times G}=\Atilde} &&
\end{diagram}
of algebra isomorphisms, with $\etilde:=\zeta(e)$.

\smallskip
The first part of the paper is arranged as follows. Section~\ref{sec basics} recalls the basic notions related to bisets and the double Burnside ring, Section~\ref{sec embeddings} establishes the properties related to the above diagram, and Section~\ref{sec aLMN} explicitly determines the multiplication formula for the basis elements $L$ and $M$ in $\Atilde$, cf.~Theorem~\ref{thm aLMN}. It turns out that, in general, the product of $L$ and $M$ is an alternating sum of very few subgroups $N$ of $G\times G$; and if the image of the right projection of $L$ to $G$ and the image of the left projection of $M$ to $G$ are different then the product is equal to $0$. 

\smallskip
The rest of the paper is dedicated to applications of the approach to view 
$\QQ B(G,G)$ as a condensed algebra $eAe$ of the twisted monoid algebra $A$. The first application is the study of simple $\QQ B(G,G)$-modules (and also simple biset functors) by first determining the simple $A$-modules and then using Green's theory, cf.~\cite[Section~6.2]{Gr2}. Recall that, by Green's theory, every simple $eAe$-module is of the form $eS$, where $S$ is a simple $A$-module. This way, there is a natural injective map from the set of isomorphism classes of simple $eAe$-modules into the set of isomorphism classes of simple $A$-modules. The class of $S$ is contained in the image of this embedding if and only if $e\cdot S\neq \{0\}$, a condition that often cannot be checked easily. Section~\ref{sec category algebras} recalls Green's theory and also results on the parametrization of simple modules of twisted category algebras due to Linckelmann--Stolorz (\cite{LS}), and on the structure of inverse category algebras due to Linckelmann (\cite{L}). This is used in Section~\ref{sec simples of A} to obtain a parametrization of the simple $A$-modules (cf.~Theorem~\ref{thm simples of A}). In Section~\ref{sec simple biset functors} we give an explicit criterion when a simple $A$-module $S$ satisfies ~$e\cdot S\neq\{0\}$, cf.~Theorem~\ref{thm simples of RB(G,G)}. This leads to improvements, but only in characteristic $0$, of results by Bouc,  as given in \cite{BoucSLN}, on the parametrization of simple $\QQ B(G,G)$-modules, cf.~Theorem~\ref{thm simple RB(G,G)-modules}, and simple biset functors, cf.~Theorem~\ref{thm simple biset functors}. By Bouc's results, the simple $\QQ B(G,G)$-modules are parametrized by pairs $(H,W)$, where $H$ is a subquotient of $G$ and $W$ is a simple $\QQ\Out(H)$-module. But not all simple modules $W$ occur in this parametrization. We give a purely representation-theoretic necessary condition for $W$ to occur in this parametrization. Finally, in Section~\ref{sec cyclic}, we prove that the algebra $\Atilde$ is the category algebra of an inverse category when $G$ is cyclic. In this case we can use the results from Section~\ref{sec category algebras} to determine an explicit isomorphism between $\Atilde$ and a product of matrix rings over group algebras,  cf.~Theorem~\ref{thm A matrix dec}.

\smallskip
For simplicity we have, so far, only given an account of the results in the setting where one considers a single finite group $G$. Throughout the paper we develop a more general theory with respect to double Burnside groups $RB(G,H)$, where $R$ is an appropriate commutative ring and $B(G,H)$ is the double Burnside group of two finite groups $G$ and $H$. Only this more general setting leads to a description of the category of biset functors as a category of modules over a condensed algebra of a twisted category algebra, cf.~Example~\ref{ex Bouc algebra}(c). This is where one really needs category algebras and not only monoid algebras. Our set-up of the underlying category on which biset functors are defined is slightly more general than the one used in \cite{BoucSLN} by using Condition~(\ref{eqn closed}) in Section~\ref{sec simples of A}, which avoids to require that all group isomorphisms are present in the category. For results on simple biset functors see also \cite{BST}.

\medskip
Unless specified otherwise, all groups occurring in this paper will
be finite, and $R$ will always denote an associative commutative unitary ring.

\bigskip
{\bf Acknowledgements.}\, The authors' research on this project was supported through
a Marie Curie Intra-European Fellowship (grant PIEF-GA-2008-219543), 
a `R\"uckkehrstipendium' of the German Academic Exchange Service (grant D/11/05654), 
and a
`Research in Pairs' Grant from the Mathematisches Forschungsinstitut 
Oberwolfach (MFO) in February 2012.
It is a pleasure to thank the Universities of Oxford, Jena and Kaiserslautern 
as well as the 
MFO, where parts of this paper were written, for their kind hospitality.

\smallskip The authors would like to thank the referee for providing Lemma~\ref{lem referee}, including a proof, together with the suggestion to use it in the proof of Theorem~\ref{thm aLMN}. This yields a much more conceptual understanding of Theorem~\ref{thm aLMN} than its original ad-hoc proof.


\section{Preliminaries}\label{sec basics}

Throughout this section, let $G$, $H$, and $K$ be finite groups.
We begin by fixing some general notation, and summarize some
basic results that will be essential for this paper. More details can be found in \cite[Part I]{BoucSLN}.

\begin{nothing}\label{noth groups}{\bf Subgroups and sections.}\;
(a)\, For every subgroup $U$ of $G$ and every $g\in G$, we set
$U^g:=g^{-1}Ug$ and ${}^gU:=gUg^{-1}$. Moreover, we denote by $c_g$ the
automorphism $G\to G:\, h\mapsto ghg^{-1}$. 

The set of all subgroup of $G$ will be denoted by $\scrS_G$, and
$\tilde{\scrS}_G\subseteq \scrS_G$ will denote a transversal for the conjugacy classes
of subgroups of $G$. For $U\leq G$, we denote its $G$-conjugacy class by $[U]_G$ and its isomorphism class by $[U]$.

\smallskip
(b)\, By a {\it section} of $G$ we understand a pair $(U,V)$ such that
$V\unlhd U\leq G$. The group $G$ acts on the set of all its sections via conjugation:
if $(U,V)$ is a section of $G$ and if $g\in G$ then we set ${}^g(U,V):=({}^gU,{}^gV)$ and
$(U,V)^g:=(U^g,V^g)$.

\smallskip
(c)\, We denote the canonical projections
$G\times H\to G$ and $G\times H\to H$ by $p_1$ and $p_2$, respectively,
and for every $L\leq G\times H$ we further set
$$k_1(L):=\{g\in G\mid (g,1)\in L\}\quad \text{ and }\quad k_2(L):=\{h\in H\mid (1,h)\in L\}\,.$$
Note that, for $i=1,2$, we have $k_i(L)\unlhd p_i(L)$ and $p_i$ induces
a group isomorphism $\bar{p}_i:L/(k_1(L)\times k_2(L))\to p_i(L)/k_i(L)$. 
Thus one has a group isomorphism
$$\eta_L:=\bar{p}_1\circ \bar{p}_2^{-1}:p_2(L)/k_2(L)\myiso p_1(L)/k_1(L)$$
with $\eta_L(hk_2(L))=gk_1(L)$, for $(g,h)\in L$. 
In this way one obtains a bijection between the
set $\scrS_{G\times H}$ and the set of quintuples $(B,A,\eta,D,C)$, where
$(B,A)$ is a section of $G$, $(D,C)$ is a section of $H$, and $\eta:D/C\myiso B/A$
is a group isomorphism; the inverse of this bijection maps a quintuple
$(B,A,\eta,D,C)$ to the group $\{(g,h)\in B\times D\mid \eta(hC)=gA\}$.
Since $L/(k_1(L)\times k_2(L))\cong p_i(L)/k_i(L)$, for $i=1,2$, one has
\begin{equation}\label{eqn |L|}
  |L| =|p_1(L)|\cdot|k_2(L)| = |k_1(L)|\cdot|p_2(L)|\,.
\end{equation}
In the sequel we will freely identify every subgroup $L\leq G\times H$ with
its corresponding quintuple $(p_1(L),k_1(L),\eta_L,p_2(L),k_2(L))$. The common
isomorphism class of the groups $p_1(L)/k_1(L)$ and $p_2(L)/k_2(L)$ will be 
denoted by $q(L)$.
If $\alpha$ is an automorphism of $G$ then we abbreviate the
group $(G,1,\alpha,G,1)\leq G\times G$ by $\Delta_\alpha(G)$; if $\alpha=c_g$ for some
$g\in G$ then we just set $\Delta_g(G):=\Delta_\alpha(G)$, and if $g=1$ then we set
$\Delta(G):=\Delta_1(G)$.

\smallskip
(d)\, For subgroups $L\leq G\times H$ and $M\leq H\times K$, we set
$$L*M:=\{(g,k)\in G\times K\mid\exists\, h\in H: (g,h)\in L,\, (h,k)\in M\},$$
which is a subgroup of $G\times K$.
\end{nothing}

\begin{nothing}\label{noth bisets}{\bf Bisets and tensor products.}\:
(a)\, By a {\it $(G,H)$-biset} we will always understand a finite set, equipped
with a left $G$-action and a right $H$-action that commute with each other. 
The $(G,H)$-bisets form a category whose morphisms are the $(G,H)$-equivariant
set maps.
As usual, we will
freely identify $(G,H)$-bisets with left $G\times H$-sets, by setting
$$(g,h) x:=gxh^{-1}\quad\text{ and }\quad gyh:=(g,h^{-1})y,$$
for every $(G,H)$-biset $X$, every left $G\times H$-set $Y$, $g\in G$, $h\in H$,
$x\in X$, $y\in Y$.
In particular, for every $(G,H)$-biset $X$ and every $L\leq G\times H$, one can
define the {\it $L$-fixed points} $X^L$ of $X$.

\smallskip
(b)\, For every $(G,H)$-biset $X$, we denote by $X^\circ$ its {\it opposite biset}, which
an $(H,G)$-biset: as a set $X^\circ$ equals $X$, and the biset structure on $X^\circ$ is given
by $hx^\circ g:=(g^{-1}xh^{-1})$, where $g\in G$, $h\in H$, $x\in X$, and $x^\circ$ is $x$
viewed as an element in $X^\circ$. Note that $X=(X^\circ)^\circ$ as $(G,H)$-bisets.

On the other hand, one defines, for every subgroup $L\leq G\times H$,
its {\it opposite group} 
$$L^\circ:=\{(h,g)\in H\times G\mid (g,h)\in L\}\,,$$
which is clearly a subgroup of $H\times G$. 
If $M\leq H\times K$ then $(L*M)^\circ=M^\circ*L^\circ$.
Moreover, one has an isomorphism
of $(H,G)$-bisets
$$(H\times G)/L^\circ\to (G\times H/L)^\circ,\quad (h,g)L^\circ\mapsto ((g,h)L)^\circ.$$

\smallskip
(c)\, Suppose that $X$ is a $(G,H)$-biset and that $Y$ is an $(H,K)$-biset. Then their cartesian product
$X\times Y$ becomes a $(G,K)$-biset in the obvious way. Moreover, $X\times Y$ is also a left
$H$-set via $h(x,y):=(xh^{-1},hy)$, for $h\in H$, $x\in X$, $y\in Y$. This $H$-action
commutes with the $G\times K$-action, and the set $X\times_H Y$ of $H$-orbits on $X\times Y$
inherits the $(G,K)$-biset structure. We call $X\times_H Y$ the
{\it tensor product} of $X$ and $Y$ and denote the $H$-orbit of an element $(x,y)\in X\times Y$ by $x\times_H y\in X\times _H Y$.
\end{nothing}

\begin{nothing}\label{noth Burnside group}{\bf (Double) Burnside rings.}\,
(a)\, The {\em Burnside ring} $B(G)$ of a finite group $G$ is the Grothendieck ring of the category of finite left $G$-sets with respect to disjoint unions and direct products. The element in $B(G)$ associated to a finite left $G$-set $X$ will be denoted by $[X]$. Thus, if $\scrStilde_G\subseteq\scrS_G$ is a set of representatives of the conjugacy classes of subgroups of $G$ then the elements $[G/U]$, $U\in\scrStilde_G$, form a $\ZZ$-basis of $B(G)$, the {\em standard basis} of $B(G)$.

\smallskip
(b)\, The Grothendieck group of the category of $(G,H)$-bisets will be denoted
by $B(G,H)$, and is called the {\it double Burnside group} of $G$ and $H$.
Identifying $(G,H)$-bisets with left $G\times H$-sets as in \ref{noth bisets}(a), we may
also identify $B(G,H)$ with the classical Burnside group $B(G\times H)$. As in Part~(a), for every $(G,H)$-biset
$X$, the image of its isomorphism class in $B(G,H)$ will be denoted by $[X]$, and if $\tilde{\scrS}_{G\times H}\subseteq\scrS_{G\times H}$ denotes a transversal for the conjugacy classes of subgroup of $G\times H$ then the
elements $[G\times H/L]$, $L\in \tilde{\scrS}_{G\times H}$, form a $\mathbb{Z}$-basis of $B(G,H)$.

\smallskip
(c)\, The tensor product of bisets gives rise to a $\ZZ$-bilinear map
$$-\cdot_H -:B(G,H)\times B(H,K)\to B(G,K),\; ([X],[Y])\mapsto [X\times_H Y],$$
where $X$ is a $(G,H)$-biset and $Y$ is an $(H,K)$-biset. In the case where $G=H=K$
this defines a multiplication, turning $B(G,G)$ into a ring with identity element
$[G]=[G\times G/\Delta(G)]$; we call $B(G,G)$ the {\it double Burnside ring of $G$}. 

\smallskip
(d)\, Sending each $(G,H)$-biset to its opposite biset induces a group isomorphism
$$-^\circ:B(G,H)\to B(H,G),\; [X]\mapsto [X^\circ]$$
with
$$([X]\cdot_H[Y])^\circ=[Y]^\circ\cdot_H[X]^\circ\in B(K,G)\quad\text{ and }\quad ([X]^\circ)^\circ=[X]\in B(G,H),$$
where $X$ is a $(G,H)$-biset and $Y$ is an $(H,K)$-biset. Thus, in the case where $G=H=K$,
this defines an anti-involution of the ring $B(G,G)$.
\end{nothing}

The following Mackey-type formula shows how to express the tensor product of 
a standard basis element of $B(G,H)$ and a standard basis element of $B(H,K)$ as
a sum of standard basis elements of $B(G,K)$.

\begin{proposition}[\cite{BoucSLN}, 2.3.34]\label{prop Mackey}
For $L\leq G\times H$ and $M\leq H\times K$, one
has
$$[G\times H/L]\cdot_H [H\times K/M]=\sum_{h\in[p_2(L)\backslash H/p_1(M)]}[G\times K/(L*{}^{(h,1)}M)]\in B(G,K),$$
where $[p_2(L)\backslash H/p_1(M)]\subseteq H$ denotes a set of double coset representatives.
\end{proposition}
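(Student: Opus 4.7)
The plan is to compute the tensor product $(G\times H/L)\times_H(H\times K/M)$ explicitly as a $(G,K)$-biset, decompose it into $(G,K)$-orbits, and identify the stabilizer of a representative in each orbit. By definition of the product $\cdot_H$, this directly yields the claimed formula in $B(G,K)$.

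First I would describe the set $(G\times H/L)\times_H(H\times K/M)$ concretely. Using the identifications in \ref{noth bisets}(a), the right $H$-action on $G\times H/L$ sends $(g_0,h_0)L$ to $(g_0,h^{-1}h_0)L$, and the left $H$-action on $H\times K/M$ sends $(h_0',k_0)M$ to $(hh_0',k_0)M$. So a general element $((g_0,h_0)L,(h_0',k_0)M)$ is $H$-equivalent to $((g_0,hh_0)L,(hh_0',k_0)M)$ for every $h\in H$. Choosing $h=h_0^{-1}$ one sees that each class has a representative of the form $(g,1)L\times_H(h,k)M$, and then applying the $(G,K)$-action one may further normalize to the element
\[
  e_h:=(1,1)L\times_H(h,1)M.
\]
Thus every $(G,K)$-orbit contains some $e_h$.

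Next I would determine when two such representatives $e_h$ and $e_{h'}$ lie in the same $(G,K)$-orbit. Unwinding the $H$-equivalence, $(g,1)L\times_H(h,k)M=(1,1)L\times_H(h',1)M$ holds if and only if there exists $h''\in H$ with $(1,h'')L=(g,1)L$ and $(h''h',1)M=(h,k)M$. The first equality forces $h''\in p_2(L)$ (and records the first coordinate in $p_1(L)$); the second forces $k\in p_2(M)$ and $h=h''h'm_1$ with $m_1\in p_1(M)$. Hence $e_h$ and $e_{h'}$ lie in the same $(G,K)$-orbit if and only if $h\in p_2(L)\,h'\,p_1(M)$, which is precisely the double-coset relation appearing in the statement.

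For the stabilizer of $e_h$, the same bookkeeping gives: $(g,k)\in\stab_{G\times K}(e_h)$ iff there is $h''\in H$ with $(g,h'')\in L$ and $(h^{-1}h''h,k)\in M$, i.e., with $(h'',k)\in {}^{(h,1)}M$. But this is exactly the defining condition in \ref{noth groups}(d) for $(g,k)\in L*{}^{(h,1)}M$. Summing over a transversal for $p_2(L)\backslash H/p_1(M)$ then produces the stated decomposition
\[
  (G\times H/L)\times_H(H\times K/M)\;\cong\;\bigsqcup_{h\in[p_2(L)\backslash H/p_1(M)]}(G\times K)/(L*{}^{(h,1)}M).
\]
The only real care is demanded by the biset conventions: right action of $h$ on $G\times H/L$ corresponds to left multiplication by $(1,h^{-1})$, so signs and sides must be tracked consistently when translating the equivalence relation defining $\times_H$ into membership conditions in $L$ and $M$. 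Once this bookkeeping is done correctly, the stabilizer and double-coset statements fall out simultaneously, and passing to classes in $B(G,K)$ yields the proposition.
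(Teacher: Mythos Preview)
Your argument is correct and is the standard direct proof: normalize representatives, read off the double-coset parametrization of orbits, and compute the stabilizer of $e_h$ to be $L*{}^{(h,1)}M$. The paper itself does not supply a proof of this proposition; it simply quotes the result from \cite[2.3.34]{BoucSLN}, so there is nothing to compare against beyond noting that your computation is exactly the one carried out in Bouc's book. A minor point of presentation: in your paragraph on when $e_h$ and $e_{h'}$ lie in the same orbit, the coset equalities you record (e.g.\ ``$(1,h'')L=(g,1)L$'') have the roles of $h$ and $h'$ slightly swapped relative to what the stabilizer paragraph needs, but this is only notational---the substance is right, as your own caveat about tracking sides and signs acknowledges.
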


In order to analyze the map
$-\cdot_H-:B(G,H)\times B(H,K)\to B(G,K)$ in more detail,
it will, therefore, be important to examine the $*$-product of subgroups of $G\times H$ with
subgroups of $H\times K$. 

\smallskip
The first of the following two lemmas is a classical result due to Zassenhaus. The second one is most likely well known to experts; however, we have not been able to find a 
suitable reference, and therefore include a proof for the reader's 
convenience.

\begin{lemma}[Butterfly Lemma, \cite{Hupp} Hilfssatz I.11.3]\label{lem butter}
Let $(B,A)$ and $(D,C)$ be two sections of $G$. Then there exists a canonical isomorphism 
$$\beta(B',A';D',C')\colon D'/C'\to B'/A'\,,$$ 
where $A\le A'\unlhd B'\le B$ and 
$C\le C'\unlhd D'\le D$ are defined as
\begin{equation*}
  B':=(B\cap D)A\,,\quad A':=(B\cap C)A\,,\quad D':=(D\cap B)C\,,\quad\text{and}\quad C':=(D\cap A)C\,.
\end{equation*}
The isomorphism $\beta(B',A';D',C')$ is uniquely determined by the property 
that it maps $xC'$ to $xA'$ for every $x\in B\cap D$.
\end{lemma}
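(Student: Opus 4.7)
The plan is to deduce the lemma from two applications of the second isomorphism theorem, both pivoted on the subgroup $B \cap D$. Specifically, I aim to show that both $B'/A'$ and $D'/C'$ are canonically isomorphic to $(B \cap D)/N$, where $N := (B \cap C)(D \cap A)$; the desired $\beta$ then arises as the composition of one such isomorphism with the inverse of the other, and by construction it sends $xC' \mapsto xA'$ for every $x \in B \cap D$.

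As a preliminary step I would verify the asserted subgroup inclusions and normalities. The inclusions $A \le A' \le B' \le B$ and $C \le C' \le D' \le D$ are immediate from the definitions. For $A' \unlhd B'$ it suffices to check that $B' = (B \cap D)A$ normalizes both factors of $A' = (B \cap C)A$: clearly $A \unlhd B' \le B$, and $B \cap D$ normalizes $B \cap C$ since for $x \in B \cap D$ one has ${}^x(B \cap C) = B \cap {}^xC = B \cap C$ (using $x \in B$ and $C \unlhd D$), while $A$ normalizes $(B \cap C)A$ because conjugation by $A$ acts trivially on $B/A$. The assertion $C' \unlhd D'$ is symmetric.

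The crux is then to analyze the surjective homomorphism $B \cap D \to B'/A'$, $x \mapsto xA'$, whose surjectivity follows from $B' = (B \cap D)A$. The key calculation is that its kernel $(B \cap D) \cap A' = (B \cap D) \cap (B \cap C)A$ equals $N$: if $x = ya \in B \cap D$ with $y \in B \cap C$ and $a \in A$, then $a = y^{-1}x \in D$ (since $y \in C \le D$ and $x \in D$), hence $a \in D \cap A$ and $x \in N$; conversely, both $B \cap C$ and $D \cap A$ lie in $(B \cap D) \cap A'$, so their product $N$ does too. This kernel identification is the only nontrivial computation in the argument and the main technical point.

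Swapping the roles of $(B,A)$ and $(D,C)$ yields, by the same reasoning, a canonical isomorphism $(B \cap D)/N' \liso D'/C'$ with $N' := (D \cap A)(B \cap C)$. The normality observations above show that $B \cap D$ normalizes both $B \cap C$ and $D \cap A$, so their product is a subgroup and $N = N'$. Composing the two canonical isomorphisms therefore produces $\beta \colon D'/C' \liso B'/A'$, and the construction explicitly realizes $\beta$ by $xC' \mapsto xA'$ for $x \in B \cap D$, establishing both existence and the stated uniqueness property.
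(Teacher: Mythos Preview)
Your proof is correct and is the standard argument for the Zassenhaus Lemma. Note that the paper does not supply its own proof of this lemma; it is quoted as a classical result with a reference to Huppert, so there is nothing to compare against beyond observing that your approach matches the textbook treatment.
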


The following definition is due to Bouc, cf.~\cite[Definition~4.3.12]{BoucSLN}.

\begin{definition}\label{def linked}
Sections $(B,A)$ and $(D,C)$ of $G$ are called {\it linked} if 
$$D\cap A=C\cap B,\quad (D\cap B)C=D,\quad \text{ and }\quad (D\cap B)A=B;$$
in this case we write $(D,C) \link (B,A)$.
\end{definition}

\begin{lemma}\label{lem butterfly}
Let $L=(P_1,K_1,\phi,P_2,K_2)\le G\times H$ and $M=(P_3,K_3,\psi,P_4,K_4)\le H\times K$.
Then
\begin{equation*}
  L*M = (P'_1,K'_1,\bar{\phi}\circ \beta(P'_2,K'_2;P'_3,K'_3)\circ \bar{\psi}, P'_4,K'_4)\,,
\end{equation*}
where
\begin{itemize}
\item $K_2\le K'_2\le P'_2\le P_2$ and $K_3\le K'_3\le P'_3\le P_3$ are determined by 
the Butterfly Lemma applied to the sections $(P_2,K_2)$ and $(P_3,K_3)$ of $H$;
\item $K_1\le K'_1\le P'_1\le P_1$ and $K_4\le K'_4\le P'_4\le P_4$ are determined by
\begin{gather*}
  P'_1/K_1=\phi(P'_2/K_2)\,,\quad K'_1/K_1=\phi(K'_2/K_2)\,, \\
  P'_4/K_4=\psi^{-1}(P'_3/K_3)\,,\quad K'_4/K_4=\psi^{-1}(K'_3/K_3)\,;
\end{gather*}
\item the isomorphisms $\bar{\phi}\colon P'_2/K'_2\to P'_1/K'_1$ and $\bar{\psi}\colon P'_4/K'_4\to P'_3/K'_3$ 
are induced by the isomorphisms $\phi$ and $\psi$.
\end{itemize}

\noindent
In particular, 

{\rm (i)}\, if $P_2=P_3$ then $(P_2',K_2')=(P_2,K_2K_3)=(P_3',K_3')$ and
$$L*M=(P_1,K_1',\bar{\phi}\circ\bar{\psi},P_4,K_4'),$$
where $K_1'$ and $K_4'$ are such that $K_1'/K_1=\phi(K_2K_3/K_2)$ and
$K_4'/K_4=\psi^{-1}(K_2K_3/K_3$);

{\rm (ii)}\, if $(P_2, K_2) = (P_3,K_3)$ then 
\begin{equation*}
  L*M = (P_1,K_1,\phi\circ\psi,P_4,K_4).
\end{equation*}
\end{lemma}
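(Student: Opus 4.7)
The plan is to compute the quintuple $(p_1(L*M),k_1(L*M),\eta_{L*M},p_2(L*M),k_2(L*M))$ directly from the definition. Unwinding the $*$-product: $(g,k)\in L*M$ if and only if there exists $h\in P_2\cap P_3$ with $gK_1=\phi(hK_2)$ and $hK_3=\psi(kK_4)$. Everything then reduces to bookkeeping with this characterization and the Butterfly Lemma identifications applied to the sections $(P_2,K_2)$ and $(P_3,K_3)$ of $H$.

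First I would determine the two projections and the two kernels of $L*M$. The element $g\in P_1$ lies in $p_1(L*M)$ iff there is some $h\in P_2\cap P_3$ with $gK_1=\phi(hK_2)$, since for such $h$ an element $k\in P_4$ automatically exists: $hK_3\in P_3/K_3$ and $\psi$ maps $P_4/K_4$ onto $P_3/K_3$. Hence $p_1(L*M)/K_1=\phi((P_2\cap P_3)K_2/K_2)=\phi(P_2'/K_2)$, which is by definition $P_1'/K_1$; analogously $p_2(L*M)=P_4'$. For the kernel, $(g,1)\in L*M$ forces $h\in K_3\cap P_2$, so $k_1(L*M)/K_1=\phi((K_3\cap P_2)K_2/K_2)=\phi(K_2'/K_2)=K_1'/K_1$, and symmetrically $k_2(L*M)=K_4'$.

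Next I would identify $\eta_{L*M}\colon P_4'/K_4'\to P_1'/K_1'$ as the claimed composite. Choose $(g,k)\in L*M$ with witness $h\in P_2\cap P_3$. Then $\bar\psi(kK_4')=hK_3'$; the Butterfly isomorphism sends $hK_3'$ to $hK_2'$ by its characterizing property, since $h\in P_2\cap P_3$; and finally $\bar\phi(hK_2')=\phi(hK_2)K_1'/K_1=gK_1'=\eta_{L*M}(kK_4')$. This proves the general formula. The well-definedness of $\bar\phi$ on $P_2'/K_2'$ and of $\bar\psi$ on $P_4'/K_4'$ is exactly the content of the defining relations $K_1'/K_1=\phi(K_2'/K_2)$ and $K_4'/K_4=\psi^{-1}(K_3'/K_3)$.

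The main obstacle I expect is purely notational: matching the Butterfly Lemma output $(P_2',K_2';P_3',K_3')$ inside $H$ with the preimage/image data of $\phi$ and $\psi$, and keeping straight which subgroup normalizes which. Once the general formula is in place, the special cases are immediate. In (i), $P_2=P_3$ gives $P_2'=P_3'=P_2$ and $K_2'=(P_2\cap K_3)K_2=K_2K_3=(P_3\cap K_2)K_3=K_3'$, so $\beta$ is the identity on $P_2/K_2K_3$ and the formula collapses to the stated one. In (ii), $(P_2,K_2)=(P_3,K_3)$ further yields $K_2'=K_2$ and $K_3'=K_3$, hence $K_1'=K_1$ and $K_4'=K_4$, and the composite reduces to $\phi\circ\psi$.
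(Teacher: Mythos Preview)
Your proposal is correct and follows essentially the same approach as the paper: both compute $p_1$, $p_2$, $k_1$, $k_2$ of $L*M$ directly from the definition via the existence of a witness $h\in P_2\cap P_3$, and then verify $\eta_{L*M}=\bar\phi\circ\beta\circ\bar\psi$ by tracking what happens to such a witness. Your write-up is in fact slightly more explicit than the paper's in spelling out the Butterfly step and the special cases (i) and (ii), which the paper leaves implicit.
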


The following diagram illustrates the result of the lemma:

\bigskip
\bigskip
\bigskip

\begin{center}

\unitlength 7mm
{\scriptsize
\begin{picture}(13,6)
\put(0,5.5){$\bullet$} \put(0,0){$\bullet$}
\put(0.2,5.7){$G$}      \put(0.2,0.2){$1$}
\put(0,5){$\bullet$} \put(0,1){$\bullet$}
\put(0.2,5.2){$P_1$}   \put(0.2,1.2){$K_1$}
\put(0.05,5.05){\line(1,0){3}}
\put(0.05,1.05){\line(1,0){3}}
\put(1.6,5){\vector(0,-20){4}}
\put(1.6,1){\vector(0,20){4}}
\put(1.9,4){$\phi$}
\put(0.05,5.55){\line(0,-1){5.5}}

\put(0,3){$\bullet$} \put(0,2){$\bullet$}
\put(0.2,3.2){$P_1'$}  \put(0.2,2.2){$K_1'$}
\multiput(0.05,3.05)(0.2,0){60}{\line(1,0){0.1}}
\multiput(0.05,2.05)(0.2,0){60}{\line(1,0){0.1}}
\put(0.9,3){\vector(0,-1){1}}
\put(0.9,2){\vector(0,1){1}}
\put(1.1,2.5){$\bar{\phi}$}

\put(3,7){$\bullet$}  \put(3,-0.7){$\bullet$}
\put(3.2,7.2){$H$}      \put(3.2,-0.5){$1$}
\put(3,5){$\bullet$} \put(3,1){$\bullet$}
\put(3.2,5.2){$P_2$}   \put(3.2,1.2){$K_2$}
\put(3,3){$\bullet$} \put(3,2){$\bullet$}
\put(3.2,3.2){$P_2'$}  \put(3.2,2.2){$K_2'$}
\put(5.9,3){\vector(0,-1){1}}
\put(5.9,2){\vector(0,1){1}}
\put(6.1,2.5){$\beta$}
\put(3.05,7.05){\line(0,-1){7.7}}

\put(9,6.5){$\bullet$}  \put(9,-1.1){$\bullet$}
\put(9.2,6.7){$H$}      \put(9.2,-0.9){$1$}
\put(9,4){$\bullet$} \put(9,0){$\bullet$}
\put(9.2,4.2){$P_3$}   \put(9.2,0.2){$K_3$}
\put(9,3){$\bullet$} \put(9,2){$\bullet$}
\put(9.2,3.2){$P_3'$}   \put(9.2,2.2){$K_3'$}
\put(9.05,4.05){\line(1,0){3}}
\put(9.05,0.05){\line(1,0){3}}
\put(11.2,3){\vector(0,-1){1}}
\put(11.2,2){\vector(0,1){1}}
\put(11.4,2.5){$\bar{\psi}$}
\put(10,4){\vector(0,-1){4}}
\put(10,0){\vector(0,1){4}}
\put(10.2,1){$\psi$}
\put(9.05,6.55){\line(0,-1){7.6}}

\put(12,6){$\bullet$}  \put(12,-1,2){$\bullet$}
\put(12.2,6.2){$K$}     \put(12.2,-1){$1$}
\put(12,4){$\bullet$}\put(12,0){$\bullet$}
\put(12.2,4.2){$P_4$}   \put(12.2,0.2){$K_4$}
\put(12,3){$\bullet$} \put(12,2){$\bullet$}
\put(12.2,3.2){$P_4'$}   \put(12.2,2.2){$K_4'$}
\put(12.05,6.05){\line(0,-1){7.2}}

\end{picture}}

\end{center}

\bigskip
\bigskip
\bigskip

\begin{proof}
For $g\in G$ one has
\begin{align*}
  g\in p_1(L*M) & \iff \exists h\in H, k\in K: (g,h)\in L, (h,k)\in M \\
  & \iff \exists h\in P_2\cap P_3: (g,h)\in L \\
  & \iff gK_1\in\phi((P_2\cap P_3)K_2/K_2)\,.
\end{align*}
This implies that $p_1(L*M)=P'_1$. Similarly one shows that $p_2(L*M)=P'_4$. Next, for $g\in G$ one has
\begin{align*}
  g\in k_1(L*M) & \iff \exists h\in H\colon (g,h)\in L, (h,1)\in M \\
  & \iff \exists h\in k_1(M): (g,h)\in L \\
  & \iff gK_1 \in \phi((K_3\cap P_2)K_2/K_2)\,.
\end{align*}
This implies that $k_1(L*M)=K'_1$. Similarly one shows that $k_2(L*M)=K'_4$. Finally, for $(g,k)\in G\times K$ one has
\begin{align*}
  (g,k)\in L*M & \iff \exists h\in H: (g,h)\in L, (h,k)\in M \\
  & \iff \exists h\in P_2\cap P_3: gK_1=\phi(hK_2), hK_3=\psi(kK_4) \\
  & \ \, \Longrightarrow \exists h\in P_2\cap P_3: gK'_1 = \bar{\phi}(hK'_2), hK'_3 = \bar{\psi}(kK'_4) \\
  & \ \, \Longrightarrow gK'_1 = \bigl(\bar{\phi}\circ \beta(P'_2,K'_2;P'_3,K'_3)\circ\bar{\psi}\bigr)(kK'_4)\,.
\end{align*}
This implies that the isomorphism $\eta_{L*M}\colon p_2(L*M)/k_2(L*M)\to p_1(L*M)/k_1(L*M)$ is equal to 
$\bar{\phi}\circ \beta(P'_2,K'_2;P'_3,K'_3)\circ\bar{\psi}$.
\end{proof}


\section{Natural embeddings of the (double) Burnside ring}\label{sec embeddings}

\noindent
Recall from \ref{noth groups} that, for every finite group $G$, we denote by 
$\scrS_G$ the set of subgroups of $G$, and by $\tilde{\scrS}_G\subseteq \scrS_G$ a transversal
for the conjugacy classes of $\scrS_G$.
Moreover, for $U\leq G$, we denote by $[U]_G$ the $G$-conjugacy 
class of $U$. If $R$ is a commutative ring then 
we denote by $[U]_G^+\in R\scrS_G$ the sum of the elements in $[U]_G$; here, 
$R\scrS_G$ denotes the free $R$-module with basis $\scrS_G$. Since $G$ acts on 
$\scrS_G$ by conjugation, we can view $R\scrS_G$ as a permutation $RG$-module. Its 
fixed points are denoted by $(R\scrS_G)^G$. We use the same notation $X^G$ for the 
set of $G$-fixed points on any $G$-set $X$.

Recall that the {\em M\"obius function} $\scrX\times\scrX\to \ZZ$, $(x,y)\mapsto \mu^\scrX_{x,y}$, of a finite partially ordered set (poset), $(\scrX,\le)$ is defined by setting $\mu^{\scrX}_{x,y}=0$ if $x\not\le y$, and in the case that $x\le y$ the integer $\mu^{\scrX}_{x,y}$ is defined recursively by the equation $\sum_{x\le z\le y} \mu_{x,z}^{\scrX} = \delta_{x,y}$, where $\delta_{x,y}$ denotes the Kronecker symbol.

\begin{definition}\label{def alpha,zeta,rho}
We define additive maps
\begin{alignat*}2
  \alpha_G\colon B(G) & \to \ZZ\scrS_G\,, & \quad [X] & \mapsto \sum_{x\in X} \stab_G(x)\,, \\ 
  \rho_G\colon B(G) & \to \ZZ\scrS_G\,, & \quad [X] & \mapsto \sum_{U\le G} |X^U|\cdot U\,, \\
  \zeta_G\colon \ZZ\scrS_G & \to \ZZ\scrS_G\,, & \quad U & \mapsto \sum_{U'\le U} U'\,.
\end{alignat*}
The map $\rho_G$ is the well-studied classical {\em mark homomorphism}.
Clearly, $\zeta_G$ is an isomorphism with inverse $\mu_G$ given by $\mu_G(U)=\sum_{U'\le U}\mu_{U',U}\cdot U'$, 
where $\mu_{U',U}$ denotes the M\"obius function with respect to the poset $\scrS_G$. Extending scalars 
from $\ZZ$ to $R$ one obtains $R$-module homomorphisms $\alpha_G\colon RB(G)\to R\scrS_G$, 
$\zeta_G\colon R\scrS_G\to R\scrS_G$ and $\rho_G\colon RB(G)\to R\scrS_G$. 
Then $\zeta_G\colon R\scrS_G\to R\scrS_G$ 
is an isomorphism of $RG$-modules and induces an isomorphism
\begin{equation*}
  \zeta_G\colon (R\scrS_G)^G\to (R\scrS_G)^G\,.
\end{equation*}
\end{definition}

\begin{proposition}\label{prop B(G)}
Let $G$ be a finite group and let $R$ be a commutative ring.

\smallskip
{\rm (a)} For $U\le G$, one has $\alpha_G([G/U])=[N_G(U):U] \cdot [U]_G^+$. 
In particular, if $R$ has trivial $|G|$-torsion then $\alpha_G\colon RB(G)\to R\scrS_G$ is injective.

\smallskip
{\rm (b)} One has $\zeta_G\circ\alpha_G=\rho_G$. 

\smallskip
{\rm (c)} The images of $\alpha_G$ and $\rho_G$ are contained in $(R\scrS_G)^G$.  
In particular, one obtains a commutative diagram
\begin{diagram}[90]
  RB(G) & \Ear{\alpha_G} & (R\scrS_G)^G & \subseteq & R\scrS_G &&
        & \seaR{\rho_G} & \saR{\zeta_G} &            & \saR{\zeta_G} &&
        &               & (R\scrS_G)^G & \subseteq  & R\scrS_G &&
\end{diagram}
of $R$-module homomorphisms in which both vertical maps $\zeta_G$ are 
isomorphisms. If $|G|$ is invertible in $R$ then also $\alpha_G$ and $\rho_G$ 
in the above diagram are isomorphisms.
For $R=\ZZ$ one has
\begin{equation*}
  [(\ZZ\scrS_G)^G:\alpha_G(B(G))] = \prod_{U\in\scrStilde_G} [N_G(U):U] = [(\ZZ\scrS_G)^G:\rho_G(B(G))].\,
\end{equation*}
\end{proposition}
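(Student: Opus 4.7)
My strategy is to verify the three claims in sequence by unwinding definitions; the only auxiliary fact I will need is that $\zeta_G\colon R\scrS_G\to R\scrS_G$ is an $R$-linear automorphism (with inverse given by M\"obius inversion) that commutes with the $G$-action, hence restricts to an automorphism of $(R\scrS_G)^G$; this is transparent from its unitriangular shape with respect to the subgroup order.

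For part~(a) I would first rewrite $\alpha_G([G/U])=\sum_{gU\in G/U}{}^gU$ via the elementary identity $\stab_G(gU)={}^gU$, and then group cosets according to the conjugate they produce: given a conjugate $V={}^{g_0}U$ of $U$, the cosets $gU$ with ${}^gU=V$ are precisely those with $g\in g_0N_G(U)$, and there are exactly $[N_G(U):U]$ of them. This yields $\alpha_G([G/U])=[N_G(U):U]\cdot[U]_G^+$. For the injectivity claim, note that $\alpha_G$ carries the standard $R$-basis $\{[G/U]\}_{U\in\scrStilde_G}$ of $RB(G)$ to the family $\{[N_G(U):U]\cdot[U]_G^+\}_{U\in\scrStilde_G}$; the class sums $[U]_G^+$ are $R$-linearly independent (being supported on pairwise disjoint subsets of the basis $\scrS_G$), and each integer $[N_G(U):U]$ divides $|G|$, hence is a non-zero-divisor in $R$ under the torsion hypothesis. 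Part~(b) reduces to an interchange of summation:
\[
(\zeta_G\circ\alpha_G)([X]) = \sum_{x\in X}\sum_{V\le\stab_G(x)}V
 = \sum_{V\le G}\bigl|\{x\in X:V\le\stab_G(x)\}\bigr|\cdot V
 = \sum_{V\le G}|X^V|\cdot V
 = \rho_G([X])\,.
\]

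For part~(c), $G$-invariance of $\alpha_G([X])$ is immediate from the change of summation variable $x\mapsto gx$, and $G$-invariance of $\rho_G$ then follows from (b) together with the $RG$-equivariance of $\zeta_G$. The diagram commutativity is exactly (b), and the two vertical copies of $\zeta_G$ are isomorphisms by the auxiliary fact. When $|G|$ is invertible in $R$, part~(a) shows that $\alpha_G$ sends the standard $R$-basis of $RB(G)$ to a rescaling by units of $R$ of the natural basis $\{[U]_G^+\}_{U\in\scrStilde_G}$ of $(R\scrS_G)^G$, so $\alpha_G$, and hence $\rho_G=\zeta_G\circ\alpha_G$, is an isomorphism. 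Finally, for $R=\ZZ$ the same basis comparison gives $[(\ZZ\scrS_G)^G:\alpha_G(B(G))]=\prod_{U\in\scrStilde_G}[N_G(U):U]$, and the equality with $[(\ZZ\scrS_G)^G:\rho_G(B(G))]$ follows from $\rho_G=\zeta_G\circ\alpha_G$ together with the fact that $\zeta_G$ restricts to a $\ZZ$-automorphism of $(\ZZ\scrS_G)^G$, so the images of $\alpha_G$ and $\rho_G$ have the same index. I do not anticipate any real obstacle here; the only point requiring mild care is verifying that $\zeta_G$ does stabilize the fixed-point submodule as an automorphism, for which the unitriangularity observation suffices.
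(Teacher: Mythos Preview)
Your proposal is correct and follows essentially the same approach as the paper: the same stabilizer count for~(a), the same interchange-of-summation for~(b), and the same basis comparison via the class sums $[U]_G^+$ for~(c). The only cosmetic difference is that the paper deduces $G$-invariance of $\alpha_G$ directly from the explicit formula in~(a) rather than by your change of variable $x\mapsto gx$, and is slightly terser about the index formula; neither is a substantive deviation.
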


\begin{proof}
(a) The stabilizer subgroup of $gU$ is equal to $\gU$ and it occurs as the 
stabilizer of precisely $[N_G(U):U]$ elements in $G/U$, namely the elements 
$gnU$ for $n\in N_G(U)$.

\smallskip
(b) For any finite $G$-set $X$ one has
\begin{equation*}
  \zeta_G(\alpha_G([X])) = \sum_{x\in X} \sum_{K\le\stab_G(x)} K =
  \sum_{\substack{(K,x)\in \scrS_G\times X \\ x\in X^K}} K = \sum_{K\le G}|X^K| \cdot K  = \rho_G([X])\,.
\end{equation*}

\smallskip
(c) By Part~(a), the image of $\alpha_G$ is contained in $(R\scrS_G)^G$ and, 
since $\zeta_G$ is an isomorphism of $RG$-modules, Part~(b) implies that 
$\rho_G(RB(G))\subseteq (R\scrS_G)^G$. 
The last two statements follow from Part~(a) 
and the fact that the elements $[U]_G^+$, $U\in\scrStilde_G$, form an $R$-basis 
of $(R\scrS_G)^G$.
\end{proof}

\begin{remark}
Let $G$ and $H$ be finite groups and let $R$ be a commutative ring.

(a) Although we will not make use of the following fact in this paper it seems 
worth to point it out. One can endow $\ZZ\scrS_G$ with two ring structures. 
The first multiplication is given by $(U,V)\mapsto U\cap V$ and the second one by 
$(U,V)\mapsto \delta_{U,V}U$, for $U,V\in\scrS_G$. Extending scalars to 
$R$ we obtain two $R$-algebra structures on $R\scrS_G$. It is easy to verify that 
the $G$-fixed points $(R\scrS_G)^G$ form a subalgebra for both $R$-algebra 
structures. Moreover, it is easy to verify that all maps in the commutative diagram in 
Proposition~\ref{prop B(G)}(c) are $R$-algebra homomorphisms if one endows $RB(G)$ 
with the usual multiplication coming from the ring structure of the Burnside 
ring $B(G)$, and if one endows $R\scrS_G$ in the top and bottom row with the 
first and second multiplication, respectively.

\smallskip
(b) From Proposition~\ref{prop B(G)}(c) we obtain a commutative diagram
\begin{diagram}
  \movevertex(-10,0){RB(G,H)} & \movearrow(-10,0){\Ear[25]{\alpha_{G,H}}} & (R\scrS_{G\times H})^{G\times H} & \subseteq & R\scrS_{G\times H} &&
  & \seaR{\rho_{G,H}} & \saR{\zeta_{G,H}} & & \saR{\zeta_{G,H}} &&
  & & (R\scrS_{G\times H})^{G\times H} & \subseteq & R\scrS_{G\times H} &&
\end{diagram}
by replacing $G$ with $G\times H$ and slightly renaming the maps. 
If also $K$ is a finite group we will next define a multiplication map 
$R\scrS_{G\times H}\times R\scrS_{H\times K}\to R\scrS_{G\times K}$ that is compatible 
with the tensor product construction $RB(G, H)\times RB(H,K)\to RB(G,K)$ via the embedding $\alpha$.
\end{remark}

\begin{definition}\label{def *_H}
Let $G$, $H$, and $K$ be finite groups, and let $R$ be a commutative ring such that $|H|$ is invertible in $R$.
For $L\le G\times H$ and $M\le H\times K$ we set
\begin{equation*}
  \kappa(L,M):=\frac{|k_2(L)\cap k_1(M)|}{|H|}\,\in\, R
\end{equation*}
and we define the $R$-bilinear map
\begin{equation*}
  - *_H^\kappa-\colon R\scrS_{G\times H} \times R\scrS_{H\times K} \to R\scrS_{G\times K}
\end{equation*}
by
\begin{equation*}
  L*_H^\kappa M := \kappa(L,M) \cdot (L*M)\,.
\end{equation*}
\end{definition}

\begin{proposition}\label{prop cocycle}
Let $G$, $H$, $K$, and $I$ be finite groups, and let $R$ be a commutative ring such 
that $|H|$ and $|K|$ are invertible in $R$. For any $L\le G\times H$, 
$M\le H\times K$, and $N\le K\times I$, the $2$-cocycle relation
\begin{equation*}
  \kappa(L,M)\cdot\kappa(L*M,N) =\kappa(L,M*N)\cdot\kappa(M,N)
\end{equation*}
holds in $R^\times$. In particular, the bilinear maps defined in 
Definition~\ref{def *_H} behave associatively:
\begin{equation*}
  (L *_H^\kappa M) *_K^\kappa N = L *_H^\kappa (M *_K^\kappa N)\,.
\end{equation*}
Moreover, if $|G|$ is invertible in $R$ then the $R$-module $R\scrS_{G\times G}$ 
together with the multiplication $*_G^\kappa$ is an $R$-algebra with identity element 
$|G|\cdot\Delta(G)$.
\end{proposition}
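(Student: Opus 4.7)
The plan is as follows. First, I would verify that each $\kappa(L,M)$ is a unit in $R$: since $k_2(L)\cap k_1(M)$ is a subgroup of $H$, the positive integer $[H : k_2(L)\cap k_1(M)]$ divides $|H|$, hence is invertible in $R$, so $\kappa(L,M)$, being the inverse of its image in $R$, lies in $R^\times$. This guarantees that the stated 2-cocycle identity is an equation in $R^\times$.

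The core of the proof is the cocycle relation itself. After multiplying through by the unit $|H|\cdot|K|$, it reduces to the integer identity
\begin{equation*}
|k_2(L)\cap k_1(M)| \cdot |k_2(L*M)\cap k_1(N)| = |k_2(L)\cap k_1(M*N)| \cdot |k_2(M)\cap k_1(N)|\,.
\end{equation*}
My plan is to establish this by counting the auxiliary set
\begin{equation*}
V := \{(h,k)\in H\times K : (1,h)\in L,\ (h,k)\in M,\ (k,1)\in N\}
\end{equation*}
along its two coordinate projections. For each $h$ in the image $k_2(L)\cap k_1(M*N)$ of the first projection, the fiber $\{k\in K : (h,k)\in M,\ (k,1)\in N\}$ is a coset of $k_2(M)\cap k_1(N)$: if $k_0, k_1$ both lie in the fiber, then $(1, k_0^{-1}k_1)=(h,k_0)^{-1}(h,k_1)\in M$ and $(k_0^{-1}k_1, 1)\in N$, which places $k_0^{-1}k_1$ in $k_2(M)\cap k_1(N)$, and conversely any such element translates one fiber member to another. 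Thus $|V| = |k_2(L)\cap k_1(M*N)|\cdot|k_2(M)\cap k_1(N)|$. The symmetric argument via the second projection, whose image is $k_2(L*M)\cap k_1(N)$, yields $|V| = |k_2(L*M)\cap k_1(N)|\cdot|k_2(L)\cap k_1(M)|$, and equating the two counts proves the identity.

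Associativity of $*_H^\kappa$ then follows by combining the cocycle relation with the set-theoretic associativity $(L*M)*N = L*(M*N)$, which is immediate from the definition of $*$: both sides consist of those $(g,i)\in G\times I$ for which there exist $h\in H$ and $k\in K$ with $(g,h)\in L$, $(h,k)\in M$, $(k,i)\in N$. For the identity element claim, noting that $k_1(\Delta(G)) = k_2(\Delta(G)) = 1$ and $\Delta(G)*L = L = L*\Delta(G)$ for every $L\le G\times G$, a direct computation gives $\kappa(\Delta(G), L) = \kappa(L, \Delta(G)) = 1/|G|$, whence $(|G|\cdot\Delta(G)) *_G^\kappa L = L = L *_G^\kappa (|G|\cdot\Delta(G))$. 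The main obstacle is setting up the double count for the cocycle identity correctly; once the fibers are identified as cosets of the relevant intersection subgroups, everything else is routine bookkeeping.
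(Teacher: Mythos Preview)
Your proposal is correct and follows essentially the same approach as the paper: both reduce the cocycle relation to the integer identity and establish it via the correspondence $(h,k)\in M$ linking elements $h\in k_2(L)\cap k_1(M*N)$ to elements $k\in k_2(L*M)\cap k_1(N)$. The paper packages this as an explicit group isomorphism between the quotients $\bigl(k_2(L)\cap k_1(M*N)\bigr)/\bigl(k_2(L)\cap k_1(M)\bigr)$ and $\bigl(k_2(L*M)\cap k_1(N)\bigr)/\bigl(k_2(M)\cap k_1(N)\bigr)$, whereas you obtain the same cardinality equality by double-counting your set $V$, which is essentially the graph of that isomorphism.
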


\begin{proof}
It suffices to show that 
\begin{equation*}
  |k_2(L)\cap k_1(M)|\cdot|k_2(L*M)\cap k_1(N)| = |k_2(L)\cap k_1(M*N)|\cdot|k_2(M)\cap k_1(N)|\,
\end{equation*}
holds in $\ZZ$. To show this it suffices to show that one has a group isomorphism
\begin{equation*}
  \frac{k_2(L)\cap k_1(M*N)}{k_2(L)\cap k_1(M)} \cong \frac{k_2(L*M)\cap k_1(N)}{k_2(M)\cap k_1(N)}\,.
\end{equation*}
In order to construct a homomorphism between these two groups, let 
$h\in k_2(L)\cap k_1(M*N)$. Then $(1,h)\in L$ and $(h,1)\in M*N$. This 
implies that there exists $k\in K$ such that $(h,k)\in M$ and $(k,1)\in N$. 
Thus, $k\in k_2(L*M)\cap k_1(N)$. If also $k'\in k_1(N)$ is such that $(h,k')\in M$ 
then $k^{-1}k'\in k_2(M)\cap k_1(N)$. Therefore we have a well-defined function 
that maps the class of $h$ to the class of $k$, with $(h,k)\in M$ and $(k,1)\in N$. 
Symmetrically, one has a well-defined function in the other direction that 
maps the class of an element $k\in k_2(L*M)\cap k_1(N)$ to the class of $h$, where 
$(h,k)\in M$ and $(1,h)\in L$. Clearly these two functions are inverses of each 
other. It is also easy to see that they are group homomorphisms.

The last two statements are immediate consequences of the $2$-cocycle relation.
\end{proof}

\begin{proposition}\label{prop alpha compatible}
Let $G$, $H$, and $K$ be finite groups, and let $R$ be a commutative ring such that 
$|H|$ is invertible in $R$. Moreover let $a\in RB(G,H)$ and $b\in RB(H,K)$. Then
\begin{equation*}
  \alpha_{G,K}(a \cdot_H b) = \alpha_{G,H}(a) *_H^\kappa \alpha_{H,K}(b)\,.
\end{equation*}
\end{proposition}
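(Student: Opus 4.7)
The plan is to reduce to basis elements by $R$-bilinearity: it suffices to prove the identity when $a=[X]$ and $b=[Y]$ for a $(G,H)$-biset $X$ and an $(H,K)$-biset $Y$. In that case the left-hand side expands as
$$\alpha_{G,K}([X\times_H Y]) = \sum_{z\in X\times_H Y} \stab_{G\times K}(z),$$
while the right-hand side, by the definition of $*_H^\kappa$, expands as
$$\sum_{x\in X}\sum_{y\in Y} \kappa(\stab_{G\times H}(x),\stab_{H\times K}(y))\cdot\bigl(\stab_{G\times H}(x) * \stab_{H\times K}(y)\bigr).$$
So the task is to match these two sums.

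The first key step is to verify the pointwise stabilizer identity
$$\stab_{G\times K}(x\times_H y) = \stab_{G\times H}(x) * \stab_{H\times K}(y)\quad\text{for all }(x,y)\in X\times Y.$$
Using the convention $(g,h)\cdot x = gxh^{-1}$, one has $(g,k)\in\stab_{G\times K}(x\times_H y)$ iff $gx\times_H yk^{-1} = x\times_H y$, which by the definition of tensor product is equivalent to the existence of $h\in H$ with $gx=xh$ and $hy=yk$, i.e.\ $(g,h)\in\stab(x)$ and $(h,k)\in\stab(y)$. This is exactly the defining condition of the $*$-product. With this identity the right-hand side becomes
$$\sum_{(x,y)\in X\times Y} \kappa(\stab(x),\stab(y))\cdot\stab_{G\times K}(x\times_H y),$$
which one can group according to the $H$-orbit on $X\times Y$ that each pair $(x,y)$ defines (recalling that $H$ acts on $X\times Y$ via $h(x,y)=(xh^{-1},hy)$).

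The second key step is to check that for each $z\in X\times_H Y$, the sum of $\kappa(\stab(x),\stab(y))$ over the $H$-orbit mapping to $z$ equals $1$. For this I compute the $H$-stabilizer of $(x,y)$: $h\in\stab_H(x,y)$ iff $xh^{-1}=x$ and $hy=y$, which translates to $h\in k_2(\stab(x))\cap k_1(\stab(y))$. Hence $|\stab_H(x,y)|/|H|=\kappa(\stab(x),\stab(y))$ and the orbit of $(x,y)$ has size $1/\kappa(\stab(x),\stab(y))$ by orbit--stabilizer. Since $\stab_H(h\cdot(x,y))=h\stab_H(x,y)h^{-1}$, the quantity $\kappa(\stab(x),\stab(y))$ is constant on each $H$-orbit, so summing it over an orbit gives orbit size times its value, namely $1$.

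Combining these two steps, the right-hand side collapses to $\sum_{z\in X\times_H Y}\stab_{G\times K}(z)$, which is the left-hand side. The only real obstacle is getting the stabilizer formula for $x\times_H y$ right, since it requires careful bookkeeping with the biset conventions; everything else is a tidy orbit-counting argument that also illuminates why the cocycle $\kappa$ is defined precisely as $|k_2(L)\cap k_1(M)|/|H|$.
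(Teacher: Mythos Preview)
Your proof is correct and follows essentially the same approach as the paper's: reduce to bisets $X$ and $Y$, use the stabilizer identity $\stab_{G\times K}(x\times_H y)=\stab_{G\times H}(x)*\stab_{H\times K}(y)$, and then pass between the sum over $X\times_H Y$ and the sum over $X\times Y$ via the orbit--stabilizer count $|\stab_H(x,y)|=|k_2(\stab(x))\cap k_1(\stab(y))|$. The paper presents this as a single chain of equalities without spelling out the two key steps; you have supplied the justifications that the paper leaves to the reader, and you run the argument from right to left rather than left to right, but the content is identical.
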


\begin{proof}
We may assume that $a=[X]$ and $b=[Y]$ for a finite $(G,H)$-biset $X$ and a finite 
$(H,K)$-biset $Y$.
Then we have
\begin{align*}
  \alpha_{G,K}([X]\cdot_H[Y]) & = \alpha_{G,K}([X\times_H Y]) = 
  \sum_{x \times\!_H y\,\in X\times_H Y} \stab_{G\times K} (x\times_H y) \\
  & = \sum_{x \times\!_H\, y\in X\times_H Y} \stab_{G\times H}(x) * \stab_{H\times K}(y) \\
  & = \sum_{(x,y)\in X\times Y} \frac{|k_2(\stab_{G\times H}(x))\cap k_1(\stab_{H\times K}(y))|}{|H|} \cdot
                 \stab_{G\times H}(x) * \stab_{H\times K}(y) \\
  & = \alpha_{G,H}([X]) *_H^\kappa \alpha_{H,K}([Y])\,.
\end{align*}

\end{proof}

For any finite group $G$ and any commutative ring $R$ we set
\begin{equation*}
  e_G:= \sum_{g\in G} \Delta_g(G) = |Z(G)|\cdot \sum_{c\in\Inn(G)} \Delta_c(G) \ \in R\scrS_{G\times G}\,.
\end{equation*}
In the case where $|G|$ is invertible in $R$ it 
follows immediately from the definition of the bilinear map $-*_H^\kappa-$ in Definition~\ref{def *_H} that $e_G$ is an idempotent in the $R$-algebra $(R\scrS_{G\times G},*_G^\kappa)$.

\begin{proposition}\label{prop fixed points}
Let $R$ be a commutative ring, and let $G$ and $H$ be finite groups whose orders are invertible in $R$.

\smallskip
{\rm (a)} One has 
\begin{equation*}
  \alpha_{G,H}(RB(G,H)) = (R\scrS_{G\times H})^{G\times H} = 
  e_G *_G^\kappa (R\scrS_{G\times H}) *_H^\kappa e_H\,.
\end{equation*}

\smallskip
 {\rm (b)} The double Burnside algebra $RB(G,G)$ is isomorphic to $e_G *_G^\kappa (R\scrS_{G\times G})*_G^\kappa e_G$.
\end{proposition}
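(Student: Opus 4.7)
My plan is to prove (a) by showing that left multiplication by $e_G$ and right multiplication by $e_H$ implement, up to a scalar, the averaging projection onto the $G\times H$-fixed points; part (b) will then follow by recognizing $\alpha_{G,G}$ as the required algebra isomorphism. The single substantive calculation is the explicit evaluation of $\Delta_g(G)*_G^\kappa L$; everything else is formal bookkeeping.

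For the first equality in (a), I will apply Proposition~\ref{prop B(G)}(c) to the group $G\times H$, whose order is invertible in $R$ by hypothesis; this yields $\alpha_{G,H}(RB(G,H)) = (R\scrS_{G\times H})^{G\times H}$. For the second equality, the key identity is
\begin{equation*}
  \Delta_g(G)*_G^\kappa L \;=\; \frac{1}{|G|}\cdot {}^{(g,1)}L \qquad (g\in G,\ L\le G\times H).
\end{equation*}
This follows directly from Definition~\ref{def *_H}: since $k_2(\Delta_g(G))=\{1\}$ one has $\kappa(\Delta_g(G),L)=1/|G|$, and unwinding the definition of $*$ in \ref{noth groups}(d) shows $\Delta_g(G)*L = {}^{(g,1)}L$. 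Summing over $g$ gives $e_G *_G^\kappa L = \frac{1}{|G|}\sum_{g\in G}{}^{(g,1)}L$, and a symmetric computation (after a reindexing $h\mapsto h^{-1}$) yields $L *_H^\kappa e_H = \frac{1}{|H|}\sum_{h\in H}{}^{(1,h)}L$. Combining these, for any $x\in R\scrS_{G\times H}$,
\begin{equation*}
  e_G *_G^\kappa x *_H^\kappa e_H \;=\; \frac{1}{|G|\,|H|}\sum_{(g,h)\in G\times H}{}^{(g,h)}x,
\end{equation*}
which is the standard averaging projection onto $(R\scrS_{G\times H})^{G\times H}$: its image is the invariants, and it restricts to the identity there. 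The second equality in (a) follows.

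For (b), applying (a) with $H=G$ together with the injectivity of $\alpha_{G,G}$ given by Proposition~\ref{prop B(G)}(a) (applicable since $|G\times G|$ is invertible in $R$) shows that $\alpha_{G,G}$ is an $R$-linear bijection onto $e_G*_G^\kappa R\scrS_{G\times G}*_G^\kappa e_G$, and Proposition~\ref{prop alpha compatible} shows that it is multiplicative. It remains to verify that $\alpha_{G,G}$ sends the unit $[G]=[G\times G/\Delta(G)]$ of $RB(G,G)$ to $e_G$, which is the unit of the condensed algebra by idempotency of $e_G$. Identifying $G\times G/\Delta(G)$ with $G$ via $(a,b)\Delta(G)\mapsto ab^{-1}$, a direct computation gives $\stab_{G\times G}((a,b)\Delta(G)) = \Delta_{ab^{-1}}(G)$, hence $\alpha_{G,G}([G]) = \sum_{x\in G}\Delta_x(G) = e_G$, completing the proof. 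The main potential obstacle is the explicit identification of $e_G *_G^\kappa L$, but once the cocycle value $\kappa(\Delta_g(G),L) = 1/|G|$ is isolated, the remainder reduces to a direct unpacking of the defining formula for $*$.
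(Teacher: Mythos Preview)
Your proof is correct and follows essentially the same approach as the paper. The only cosmetic differences are that the paper computes $\Delta_g(G)*L*\Delta_h(H)={}^{(g,h^{-1})}L$ in one step by citing Lemma~\ref{lem butterfly}, whereas you handle the two sides separately directly from the definition of~$*$, and that your explicit verification $\alpha_{G,G}([G])=e_G$ in part~(b) is not strictly needed (a bijective multiplicative map between unital rings is automatically unital), though it is certainly correct.
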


\begin{proof}
(a) The first equation was already proved in Proposition~\ref{prop B(G)}(c). For the proof of 
the second equation note that, for $g\in G$, $h\in H$ and $L\le G\times H$, one has
\begin{equation*}
  \Delta_g(G) * L * \Delta_h(H) = \lexp{(g,h^{-1})}{L}\,,
\end{equation*}
by Lemma~\ref{lem butterfly}. This implies that 
\begin{equation*}
  e_G *_G^\kappa L *_H^\kappa e_H = \frac{|N_{G\times H}(L)|}{|G\times H|} \cdot [L]_{G\times H}^+\,. 
\end{equation*}
Now the second equation is immediate.

\smallskip
(b) This is an immediate consequence of Proposition~\ref{prop alpha compatible} and Part~(a), 
since $\alpha_{G,G}$ is injective.
\end{proof}

Finally, we will show in the next proposition that the bilinear map $- *_H^\kappa -$ from
Definition~\ref{def *_H} is translated via the isomorphism $\zeta$ into the following bilinear map.

\begin{definition}\label{def aLMN}
Let $G$, $H$, and $K$ be finite groups, and let $R$ be a commutative ring such that $|H|$ is 
invertible in $R$. We define the $R$-bilinear map
\begin{equation*}
  - \startilde_H^\kappa - \colon R\scrS_{G\times H} \times R\scrS_{H\times K} \to R\scrS_{G\times K}\,, \quad
  (L,M)\mapsto \sum_{N\le G\times K} a^{L,M}_N \cdot N\,,
\end{equation*}
where, for $L\le G\times H$, $M\le H\times K$, and $N\le G\times K$, the coefficient $a_N^{L,M}\in R$ is defined by
\begin{equation*}
  a^{L,M}_N :=\frac{1}{|H|} \sum_{\substack{(L',M')\le (L,M)\\N\le L'*M'}} 
  \mu_{(L',M'),(L,M)}^{\scrS_{G\times H}\times\scrS_{H\times K}} \cdot |k_2(L')\cap k_1(M')| 
  = \sum_{\substack{(L',M')\in\scrS_{G\times H}\times\scrS_{H\times K}\\ N\le L'*M'}} 
  \mu_{(L',M'),(L,M)}^{\scrS_{G\times H}\times\scrS_{H\times K}} \cdot \kappa(L',M')\,.
\end{equation*}
Here, $\mu_{(L',M'),(L,M)}^{\scrS_{G\times H}\times\scrS_{H\times K}}$ denotes the M\"obius 
function on the poset $\scrS_{G\times H}\times \scrS_{H\times K}$ equipped with 
the direct product partial order on $\scrS_{G\times H}\times \scrS_{H\times K}$, 
i.e., $(L',M')\le (L,M)$ if and only if $L'\le L$ and $M'\le M$. Note that 
$a^{M^\circ,L^\circ}_{N^\circ} =a^{L,M}_N$ and 
$(a\startilde_H^\kappa b)^\circ = b^\circ \startilde_H^\kappa a^\circ$ 
for $a\in R\scrS_{G\times H}$ and $b\in R\scrS_{H\times K}$. Here, $-^\circ\colon R\scrS_{G\times H}\to R\scrS_{H\times G}$ is defined as the $R$-linear extension of the map 
$\scrS_{G\times H}\to \scrS_{H\times G},\; L\mapsto L^\circ$.
\end{definition}

\begin{proposition}\label{prop zeta}
Let $G$, $H$, and $K$ be finite groups, and let $R$ be a commutative ring such 
that $|H|$ is invertible in $R$.

\smallskip
{\rm (a)} For all $a\in R\scrS_{G\times H}$ and $b\in R\scrS_{H\times K}$ one has 
$\zeta_{G,K}(a*_H^\kappa b) = \zeta_{G,H}(a)\ \startilde_H^\kappa\ \zeta_{H,K}(b)$.

\smallskip
{\rm (b)} If $|G|$ is invertible in $R$ then the map $\rho_{G,G}$ defines an 
isomorphism between the double Burnside $R$-algebra $RB(G,G)$ and the subring 
\begin{equation*}
  (R\scrS_{G\times G})^{G\times G}= \etilde_G\ \startilde_G^\kappa\ R\scrS_{G\times G}\ 
  \startilde_G^\kappa \ \etilde_G\end{equation*} 
of $(R\scrS_{G\times G}, \startilde_G^\kappa)$,
where $\etilde_G:=\zeta_G(e_G)$ is an idempotent in $(R\scrS_{G\times G},\startilde_G^\kappa)$.
\end{proposition}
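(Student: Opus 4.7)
Part (a) should reduce to a formal Möbius-inversion calculation. By $R$-bilinearity it suffices to verify the identity on basis elements $L\in\scrS_{G\times H}$ and $M\in\scrS_{H\times K}$. The left-hand side is immediate:
\begin{equation*}
  \zeta_{G,K}(L *_H^\kappa M) \;=\; \zeta_{G,K}\bigl(\kappa(L,M)\cdot L*M\bigr) \;=\; \kappa(L,M)\sum_{N\le L*M} N\,.
\end{equation*}
For the right-hand side, I would expand $\zeta_{G,H}(L)\startilde_H^\kappa\zeta_{H,K}(M)$ using the definition of $\zeta$, substitute the structure constants $a^{L',M'}_N$ from Definition~\ref{def aLMN}, and swap the order of summation to arrive at
\begin{equation*}
  \sum_N N \cdot \sum_{\substack{(L'',M'')\le(L,M)\\ N\le L''*M''}} \kappa(L'',M'') \cdot \sum_{(L'',M'')\le(L',M')\le(L,M)} \mu^{\scrS_{G\times H}\times\scrS_{H\times K}}_{(L'',M''),(L',M')}\,.
\end{equation*}
The innermost sum over $(L',M')$ equals $\delta_{(L'',M''),(L,M)}$ by the defining recursion of the Möbius function on the product poset $\scrS_{G\times H}\times\scrS_{H\times K}$, so only the term $(L'',M'')=(L,M)$ survives. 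The right-hand side therefore collapses to $\kappa(L,M)\sum_{N\le L*M} N$, matching the left-hand side.

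Part (b) is then a formal synthesis of results already in place. Since $|G\times G|=|G|^2$ is invertible in $R$, Proposition~\ref{prop B(G)}(c) applied with $G$ replaced by $G\times G$ supplies an $R$-module isomorphism $\rho_{G,G}\colon RB(G,G)\liso (R\scrS_{G\times G})^{G\times G}$ together with the factorization $\rho_{G,G}=\zeta_{G,G}\circ\alpha_{G,G}$. Part~(a) shows that $\zeta_{G,G}$ is an $R$-algebra isomorphism from $(R\scrS_{G\times G},*_G^\kappa)$ onto $(R\scrS_{G\times G},\startilde_G^\kappa)$; in particular $\etilde_G=\zeta_{G,G}(e_G)$ is automatically idempotent with respect to $\startilde_G^\kappa$ because $e_G$ is idempotent with respect to $*_G^\kappa$. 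Composing with Proposition~\ref{prop alpha compatible} then gives that $\rho_{G,G}$ is a ring homomorphism into $(R\scrS_{G\times G},\startilde_G^\kappa)$. Finally, since $\zeta_{G,G}$ is $G\times G$-equivariant, it preserves the fixed-point submodule, and applying it to the description provided by Proposition~\ref{prop fixed points}(a) yields
\begin{equation*}
  (R\scrS_{G\times G})^{G\times G} \;=\; \zeta_{G,G}\bigl(e_G *_G^\kappa R\scrS_{G\times G} *_G^\kappa e_G\bigr) \;=\; \etilde_G \startilde_G^\kappa R\scrS_{G\times G} \startilde_G^\kappa \etilde_G\,,
\end{equation*}
completing the identification of the image.

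The only genuinely delicate step is the bookkeeping in Part~(a): one must carefully untangle the triply nested sum coming from the definitions of $\zeta$ and $a^{L',M'}_N$, and recognize that the remaining inner sum is precisely the convolution-inverse relation for the Möbius function on the product poset $\scrS_{G\times H}\times\scrS_{H\times K}$. Once that step is cleared, no further substantive work is required; the rest is a direct assembly of Propositions~\ref{prop B(G)}(c), \ref{prop alpha compatible}, \ref{prop fixed points}(a), and Part~(a).
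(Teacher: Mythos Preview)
Your proof is correct and follows essentially the same strategy as the paper. The only minor difference is in Part~(a): the paper verifies the identity on the basis $a=\mu_{G,H}(L)$, $b=\mu_{H,K}(M)$, so that $\zeta_{G,H}(a)=L$ and $\zeta_{H,K}(b)=M$ and the right-hand side is immediately $\sum_N a^{L,M}_N N$ by the definition of $\startilde_H^\kappa$, while the left-hand side is the expansion requiring the M\"obius manipulation. You instead take $a=L$, $b=M$, making the left-hand side immediate and pushing the M\"obius collapse to the right-hand side. These are dual versions of the same computation; your triple-sum bookkeeping is accurate and the recursion $\sum_{(L'',M'')\le (L',M')\le (L,M)}\mu_{(L'',M''),(L',M')}=\delta_{(L'',M''),(L,M)}$ is exactly what is needed. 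Part~(b) is the same assembly of prior results as in the paper.
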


\begin{proof}
(a) Recalling from Definition~\ref{def alpha,zeta,rho} that the maps $\zeta$ are 
isomorphisms with inverse $\mu$, it suffices to prove the equation for $a=\mu_{G,H}(L)$ 
and $b=\mu_{H,K}(M)$ with $L\le G\times H$ and $M\le H\times K$. In this case we have
\begin{align*}
  \zeta_{G,K}(a *_H^\kappa b) & = \sum_{L'\le L} \sum_{M'\le M} \mu_{L',L}\cdot\mu_{M',M}\cdot \zeta_{G,K}(L' *_H^\kappa M') \\
  & =  \sum_{(L',M')\le (L,M)} \mu_{(L',M'),(L,M)}^{\scrS_{G\times H}\times\scrS_{H\times K}}
  \cdot\kappa(L',M') \cdot \sum_{N\le L'*M'} N \\
  & = \sum_{N\le G\times K}\Bigl( \sum_{\substack{(L',M')\le (L,M)\\N\le L'*M'}}
  \mu_{(L',M'),(L,M)}^{\scrS_{G\times H}\times\scrS_{H\times K}} \cdot \kappa(L',M')\Bigr) \cdot N \\
  & = \sum_{N\le G\times K} a^{L,M}_N \cdot N = L\startilde_H^\kappa M = \zeta_{G\times H}(a)\ \startilde_H^\kappa\ \zeta_{H\times K}(b)\,.
\end{align*}

\smallskip
(b) This follows immediately from Part~(a) and Proposition~\ref{prop fixed points}.
\end{proof}


\section{Simplifying the coefficient $a^{L,M}_N$}\label{sec aLMN}

Throughout this section, $G$, $H$, and $K$ denote finite groups, and $R$ denotes a 
commutative ring such that $|H|$ is invertible in $R$. Moreover, we will consider subgroups
$L\le G\times H$, $M\le H\times K$, and $N\le G\times K$. The goal of this section 
is a simplification of the formula for the `multiplication' structure constants $a^{L,M}_N$ in 
Definition~\ref{def aLMN}. 
For this we will need to work with the partially ordered set $\scrS_{G\times H}\times \scrS_{H\times K}$ endowed with 
the direct product poset structure of $\scrS_{G\times H}$ and $\scrS_{H\times K}$, cf.~Definition~\ref{def aLMN}.
For $N\le G\times K$, we set
\begin{equation*}
  \scrX_N=:= \{(L,M)\in\scrS_{G\times H}\times \scrS_{H\times K} 
  \mid N\le L*M\}
\end{equation*}
and consider it as a subposet of $\scrS_{G\times H}\times\scrS_{H\times K}$.
Note that for any two elements $(L',M')\le (L,M)$ in $\scrX_N$ one has
\begin{equation}\label{eqn Gamma consequence}
  p_1(N)\le p_1(L'*M')\le p_1(L')\le p_1(L)\quad\text{and}\quad
  p_2(N)\le p_2(L'*M')\le p_2(M')\le p_2(M)\,.
\end{equation}
Note also that $\scrX_N$ is a {\em convex} subposet of $\scrS_{G\times H}\times \scrS_{H\times K}$, i.e., for $x,y,z\in\scrS_{G\times H}\times\scrS_{H\times K}$ with $x\le y\le z$ and $x,z\in \scrX_N$ one also has $y\in\scrX_N$. Thus, we can rewrite $a^{L,M}_N$ from Definition~\ref{def aLMN} as
\begin{equation}\label{eqn aLMN}
  a^{L,M}_N = \sum_{(L',M')\in\scrX_N} \mu_{(L',M'),(L,M)}^{\scrX_N} \cdot \kappa(L',M')\,.
\end{equation}
We will simplify this sum using the following lemma.

\begin{lemma}\label{lem referee}
Let $(\scrX,\le)$ be a finite poset and let $A$ be an abelian group. Moreover, let $\phi\colon \scrX\to\scrX$ and $\kappa\colon\scrX\to A$ be maps such that

\smallskip
{\rm (i)} $\phi$ is a map of posets, $\phi\circ\phi=\phi$, and $\phi(x)\le x$ for all $x\in\scrX$, and

\smallskip
{\rm (ii)} $\kappa\circ\phi=\kappa\colon\scrX\to A$.

\smallskip\noindent
Set $\scrY:=\phi(\scrX)$. Then, for each $z\in\scrX$, one has
\begin{equation*}
  \sum_{x\in\scrX}\mu_{x,z}^{\scrX}\cdot \kappa(x) =
  \begin{cases} 0 & \text{if $z\notin\scrY$,}\\
                         \mathop{\sum}\limits_{y\in\scrY} \mu_{y,z}^{\scrY}\cdot \kappa(y) & \text{if $z\in\scrY$,}
  \end{cases}
\end{equation*}
where $\mu^{\scrX}$ denotes the M\"obius function of $\scrX$, and $\mu^{\scrY}$ denotes the M\"obius function of $\scrY$, considered as a subposet of $\scrX$.
\end{lemma}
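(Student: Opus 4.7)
The plan is to prove the formula by invoking uniqueness of Möbius inversion on $\scrX$. First, I would define an auxiliary function $h\colon\scrX\to A$ by declaring $h(y):=\sum_{y'\in\scrY,\,y'\le y}\mu^{\scrY}_{y',y}\kappa(y')$ for $y\in\scrY$ and $h(x):=0$ for $x\notin\scrY$. The lemma then amounts to the identity $h(z)=\sum_{x\in\scrX}\mu^{\scrX}_{x,z}\kappa(x)$ for every $z\in\scrX$. By standard Möbius inversion on $\scrX$, this reduces to verifying the single identity $\sum_{x\le z}h(x)=\kappa(z)$ for every $z\in\scrX$.

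The heart of the argument is a small adjunction-type statement: for every $y\in\scrY$ and every $z\in\scrX$, I would establish
\[y\le z\ \Longleftrightarrow\ y\le\phi(z).\]
The direction ``$\Rightarrow$'' uses that $\phi$ is a poset map together with $\phi(y)=y$ (since $\scrY=\phi(\scrX)$ is exactly the fixed-point set of the idempotent $\phi$); the direction ``$\Leftarrow$'' uses only $\phi(z)\le z$. In particular, this identifies $\{y\in\scrY: y\le z\}$ with $\{y\in\scrY: y\le\phi(z)\}$.

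With this in hand I would compute, for an arbitrary $z\in\scrX$,
\[\sum_{x\le z}h(x)\ =\ \sum_{\substack{y\in\scrY\\ y\le z}}h(y)\ =\ \sum_{\substack{y\in\scrY\\ y\le\phi(z)}}h(y)\ =\ \kappa(\phi(z))\ =\ \kappa(z),\]
where the first equality uses the support of $h$, the second uses the equivalence above, the third is Möbius inversion inside the subposet $\scrY$ applied to $\phi(z)\in\scrY$ (which is precisely the content of the definition of $h|_{\scrY}$), and the last is hypothesis~(ii). This would give the desired formula, with the ``$z\notin\scrY$'' case of the lemma reflected by the vanishing of $h$ off $\scrY$, and the ``$z\in\scrY$'' case matching the definition of $h$ there.

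The main difficulty I foresee is not technical but bookkeeping: one must carefully track which Möbius function belongs to which poset at each step. The argument becomes transparent once one recognises that $\phi$ is right adjoint to the inclusion $\scrY\hookrightarrow\scrX$, and it is precisely this adjunction that forces the inner Möbius inversion on $\scrY$ to be compatible with the outer one on $\scrX$.
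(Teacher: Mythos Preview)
Your argument is correct and takes a genuinely different route from the paper's proof. The paper rewrites $\sum_{x\in\scrX}\mu^{\scrX}_{x,z}\kappa(x)$ as an alternating sum over chains in $\scrX$ ending at $z$, and then constructs a sign-reversing involution $\psi$ on the set of chains not entirely contained in $\scrY$: for the first term $x_i$ of a chain lying outside $\scrY$, one either removes or inserts $\phi(x_i)$ just below it. This involution preserves $\kappa$ of the bottom element and flips parity, so the ``bad'' chains cancel and only chains in $\scrY$ survive.

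Your approach is more algebraic: you define $h$ to be the claimed right-hand side, observe that the lemma is equivalent via M\"obius inversion on $\scrX$ to the identity $\sum_{x\le z}h(x)=\kappa(z)$, and verify the latter directly using the equivalence $y\le z\Leftrightarrow y\le\phi(z)$ for $y\in\scrY$ together with M\"obius inversion inside $\scrY$. This is cleaner and shorter, and your remark that $\phi$ is right adjoint to the inclusion $\scrY\hookrightarrow\scrX$ is exactly the conceptual content: the equivalence you prove is the unit--counit description of that adjunction, and such adjunctions are well known to transport M\"obius sums between a poset and a coreflective subposet. The paper's involution, by contrast, makes the cancellation mechanism explicit and generalises readily to other alternating-sum identities where no adjunction is present. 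Either proof is perfectly adequate here.
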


\begin{proof} 
Consider the set $\calC_z$ of chains $\sigma=(x_0<\ldots <x_n=z)$
in $\scrX$, and for each such $\sigma\in\mathcal{C}_z$, set 
$\kappa(\sigma):=\kappa(x_0)$. Then we have
\begin{equation*}
  \sum_{x\in\scrX}\mu^{\scrX}_{x,z}\cdot\kappa(x)=
  \sum_{\sigma\in\mathcal{C}_z}(-1)^{|\sigma|}\cdot\kappa(\sigma)\,,
\end{equation*}
where $|\sigma|$ denotes the length of the chain $\sigma$, i.e., $|\sigma|=n$ for the chain $\sigma$ above.

We now denote by $\mathcal{D}_z\subseteq\mathcal{C}_z$ the set of those chains
all of whose terms belong to $\scrY$, and we 
define a map $\psi:\mathcal{C}_z\smallsetminus \mathcal{D}_z\to\mathcal{C}_z\smallsetminus \mathcal{D}_z$ such that
$$\psi\circ\psi=\id,\quad |\psi(\sigma)|-|\sigma|\in\{\pm 1\},\quad  \kappa(\psi(\sigma))=\kappa(\sigma)\,.$$
For this, let 
$\sigma\in\mathcal{C}_z\smallsetminus \mathcal{D}_z$ with $\sigma=(x_0<\ldots<x_n=z)$. Then there
is a minimal index $i\in\{0,\ldots,n\}$ such that $x_i\notin\scrY$, that is, 
$y_i:=\phi(x_i)<x_i$, since, by definition,
$\scrY:=\{x\in\scrX\mid \phi(x)=x\}$. If $i\geq 1$ and $x_{i-1}=y_i$ then we define
$\psi(\sigma)$ as the chain obtained from $\sigma$ by removing $x_{i-1}$,
and if $i=0$ or if $i\geq 1$ and $x_{i-1}<y_i$ then we define $\psi(\sigma)$
as the chain obtained from $\sigma$ by inserting $y_i$.
This yields the desired map $\psi$, and we thus get
\begin{equation*}
  \sum_{x\in\scrX}\mu^{\scrX}_{x,z}\cdot \kappa(x)=
  \sum_{\sigma\in\mathcal{C}_z}(-1)^{|\sigma|}\cdot\kappa(\sigma)=  
  \sum_{\sigma\in\mathcal{D}_z}(-1)^{|\sigma|}\cdot\kappa(\sigma)=
  \begin{cases}0&\text{ if } z\notin\scrY,\\
     \sum_{y\in\scrY}\mu^{\scrY}_{y,z}\cdot \kappa(y)&\text{ if } z\in\scrY\,.
  \end{cases}
\end{equation*}
\end{proof}

%
%
\bigskip

Let $N\le G\times K$. We will apply Lemma~\ref{lem referee} to the partially ordered set $\scrX:=\scrX_N$, to the map $\phi\colon\scrX\to\scrX$, $(L,M)\mapsto(\Ltilde,\Mtilde)$, where
\begin{gather*}
  \Ltilde:=\{(g,h)\in L\mid g\in p_1(N), \exists k\in p_2(N)\colon (h,k)\in M\}\,, \\
  \Mtilde:=\{(h,k)\in M \mid k\in p_2(N), \exists g\in p_1(N)\colon (g,h)\in L\}\,,
\end{gather*}
to the additive group of the ring $R$, and to the function $\kappa\colon \scrX\to R$, $(L,M)\mapsto |k_2(L)\cap k_1(M)|/|H|$ as in Definition~\ref{def *_H}. Note that for simplicity we used abusive notation: the subgroup $\Ltilde$ depends on $L$ and $M$, not just on $L$. Similarly, $\Mtilde$ depends on $L$ and $M$. It is now a straightforward verification that the conditions (i) and (ii) in Lemma~\ref{lem referee} are satisfied and that
\begin{equation*}
  \scrY_N:=\phi(\scrX_N) = \{(L,M)\in\scrX_N\mid p_1(L)=p_1(N),\ p_2(L)=p_2(M),\ p_2(M)=p_2(N)\}\,.
\end{equation*}
For $(L,M)\in\scrY_N$ we also set
\begin{equation*}
  \scrY_N^{L,M}:=\{(L',M')\in\scrY_N\mid (L',M')\le (L,M)\}\,.
\end{equation*}
The following theorem is now an immediate consequence of Lemma~\ref{lem referee} applied to Equation~(\ref{eqn aLMN}).

\begin{theorem}\label{thm aLMN}
Let $L\le G\times H$, $M\le H\times K$ and $N\le G\times K$. Then
\begin{equation*}
  a^{L,M}_N = \begin{cases} 0 & \text{if $(L,M)\notin\scrY_N$,} \\
         \mathop{\sum}\limits_{(L',M')\in\scrY_N^{L,M}} \mu^{\scrY_N}_{(L',M'),(L,M)}\cdot \kappa(L',M') 
         & \text{if $(L,M)\in\scrY_N$.}
   \end{cases}
\end{equation*}
\end{theorem}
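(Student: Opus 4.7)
The plan is to apply Lemma~\ref{lem referee} directly to the data laid out immediately before the theorem: the finite poset $\scrX_N$, the self-map $\phi\colon (L,M)\mapsto(\Ltilde,\Mtilde)$, the abelian group $(R,+)$, and the function $\kappa$ of Definition~\ref{def *_H}. Combined with the reformulation~(\ref{eqn aLMN}) of $a^{L,M}_N$, this will immediately yield the claimed case distinction, provided one verifies the hypotheses (i) and (ii) of the lemma and identifies $\phi(\scrX_N)$ with the set $\scrY_N$ from the statement.

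For hypothesis (i), I would first check that $\phi$ really lands inside $\scrX_N$, i.e.\ that $N\le \Ltilde*\Mtilde$: for $(g,k)\in N$ and any $h\in H$ with $(g,h)\in L$, $(h,k)\in M$, the conditions $g\in p_1(N)$ and $k\in p_2(N)$ force $(g,h)\in\Ltilde$ and $(h,k)\in\Mtilde$. That $\phi$ is order-preserving and satisfies $\phi(L,M)\le(L,M)$ is immediate from the definition. For idempotency $\phi\circ\phi=\phi$, note that if $(g,h)\in \Ltilde$, with witness $k\in p_2(N)$ such that $(h,k)\in M$, then the pair $g\in p_1(N)$ together with $(g,h)\in L$ itself witnesses $(h,k)\in \Mtilde$; hence $(g,h)$ still belongs to the first component of $\phi(\Ltilde,\Mtilde)$, and the symmetric argument handles the second component.

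For hypothesis (ii) I must verify $k_2(\Ltilde)\cap k_1(\Mtilde)=k_2(L)\cap k_1(M)$. The inclusion $\subseteq$ is automatic from $\Ltilde\le L$ and $\Mtilde\le M$. For the converse, given $h\in k_2(L)\cap k_1(M)$ one has $(1,h)\in L$ and $(h,1)\in M$; since $1\in p_1(N)\cap p_2(N)$, these elements themselves witness $(1,h)\in\Ltilde$ and $(h,1)\in\Mtilde$. Consequently $\kappa(\Ltilde,\Mtilde)=\kappa(L,M)$, so $\kappa\circ\phi=\kappa$.

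It remains to identify $\scrY_N = \phi(\scrX_N)$, i.e.\ the set of $(L,M)\in\scrX_N$ satisfying $\Ltilde=L$ and $\Mtilde=M$. Unpacking these two equalities and using the automatic inclusions $p_1(N)\le p_1(L)$ and $p_2(N)\le p_2(M)$ coming from $N\le L*M$ (cf.~(\ref{eqn Gamma consequence})), one checks that being fixed by $\phi$ is equivalent to the three equalities listed in the definition of $\scrY_N$. Lemma~\ref{lem referee} applied to~(\ref{eqn aLMN}) then gives the formula of the theorem, where restricting the sum from $\scrY_N$ to $\scrY_N^{L,M}$ is harmless because $\mu^{\scrY_N}_{(L',M'),(L,M)}$ vanishes unless $(L',M')\le(L,M)$. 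I do not foresee any real obstacle, the only mildly delicate step being the verification of idempotency of $\phi$; the substantive combinatorics has been absorbed into the referee's lemma, and only the routine verifications above remain.
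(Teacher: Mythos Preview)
Your proposal is correct and follows exactly the approach of the paper: apply Lemma~\ref{lem referee} to Equation~(\ref{eqn aLMN}) with the data $(\scrX_N,\phi,\kappa)$ introduced just before the theorem. The paper declares the verification of hypotheses~(i) and~(ii) and the identification of $\phi(\scrX_N)$ with $\scrY_N$ to be ``straightforward'' and omits it; you have carried out precisely that omitted verification, so your write-up is a faithful (and slightly more detailed) version of the intended proof.
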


From the above theorem and Definition~\ref{def aLMN} the following corollary is immediate.

\begin{corollary}\label{cor aLMN}
Let $L\le G\times H$ and $M\le H\times K$. Then $L\startilde_H^\kappa M$ is a linear combination of subgroups $N\le G\times K$ satisfying $p_1(N)=p_1(L)$ and $p_2(N)=p_2(M)$. Moreover, if $p_1(L)\neq p_2(M)$ then $L\startilde_H^\kappa M = 0$.
\end{corollary}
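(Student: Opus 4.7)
The plan is to read off both claims directly from Theorem~\ref{thm aLMN} together with the explicit description of the poset $\scrY_N$ established just above that theorem. Since, by Definition~\ref{def aLMN}, we have $L\startilde_H^\kappa M = \sum_{N\le G\times K} a^{L,M}_N\cdot N$, everything reduces to controlling when the structure constants $a^{L,M}_N$ can be nonzero.

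For the first claim, Theorem~\ref{thm aLMN} asserts that $a^{L,M}_N = 0$ whenever $(L,M)\notin\scrY_N$, and the explicit description of $\scrY_N = \phi(\scrX_N)$ forces any $(L,M)\in\scrY_N$ to satisfy $p_1(L) = p_1(N)$ and $p_2(M) = p_2(N)$. Consequently, the only $N\le G\times K$ that can appear with a nonzero coefficient are those with $p_1(N) = p_1(L)$ and $p_2(N) = p_2(M)$, exactly as asserted in the first sentence of the corollary.

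For the second claim, I would isolate the remaining defining condition of $\scrY_N$, namely the $H$-projection compatibility between $L$ and $M$ (that is, $p_2(L)$ must coincide with $p_1(M)$, both of which are subgroups of $H$). Since this condition concerns $(L,M)$ alone and is independent of $N$, its failure prevents $(L,M)$ from lying in $\scrY_N$ for any $N\le G\times K$; hence every coefficient $a^{L,M}_N$ vanishes by Theorem~\ref{thm aLMN}, and $L\startilde_H^\kappa M = 0$ follows at once. All of the genuine work, in particular the M\"obius-inversion collapse from $\scrX_N$ down to $\scrY_N$ effected via Lemma~\ref{lem referee}, has already been absorbed into Theorem~\ref{thm aLMN}; the corollary is then a direct unpacking of the projection conditions built into $\scrY_N$, and I do not anticipate any further obstacle.
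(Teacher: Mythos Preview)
Your argument is correct and is exactly the unpacking the paper has in mind when it says the corollary is ``immediate'' from Theorem~\ref{thm aLMN} and Definition~\ref{def aLMN}: you read off the projection constraints $p_1(L)=p_1(N)$, $p_2(M)=p_2(N)$, and $p_2(L)=p_1(M)$ from the description of $\scrY_N$, and observe that the last one already kills every coefficient when it fails. You have also silently corrected what is evidently a typo in the paper (both in the displayed description of $\scrY_N$ and in the statement of the corollary): the middle condition should read $p_2(L)=p_1(M)$, comparing the two $H$-projections, rather than $p_2(L)=p_2(M)$ or $p_1(L)\neq p_2(M)$, which compare subgroups of different ambient groups; your reading is the mathematically sensible one and is confirmed by the case handled in Proposition~\ref{prop left-free}.
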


Next we consider the `multiplication' of special types of subgroups $L$ and $M$ with respect to $\startilde_H^\kappa$.

\begin{proposition}\label{prop left-free}
Assume that $k_1(L)=1$ and $k_1(M)=1$ (or that $k_2(L)=1$ and $k_2(M)=1$). Then
\begin{equation*}
  L\startilde^\kappa_H M = 
  \begin{cases}\frac{1}{|H|}\cdot L*M & \text{if $p_2(L)=p_1(M)$,}\\
       0 & \text{if $p_2(L)\neq p_1(M)$\,.}
  \end{cases}
\end{equation*}
\end{proposition}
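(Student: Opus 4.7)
The plan is to apply Theorem~\ref{thm aLMN} and exploit the fact that, under the hypothesis $k_1(L)=k_1(M)=1$, both $L$ and $M$ are graphs of surjective group homomorphisms, which makes the sets $\scrY_N$ collapse dramatically. If $p_2(L)\neq p_1(M)$ the vanishing is immediate from Corollary~\ref{cor aLMN}, so I assume $p_2(L)=p_1(M)$ and first treat the case $k_1(L)=k_1(M)=1$; the case $k_2(L)=k_2(M)=1$ will be reduced to it via the anti-involution $-^\circ$.

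Three observations drive the argument. Firstly, since the condition $k_1=1$ is inherited by subgroups, every $L'\le L$ has $k_1(L')=1$ and so, in the quintuple picture of~\ref{noth groups}(c), is the graph of the surjection $p_2(L')\twoheadrightarrow p_1(L')$ induced by $\eta_{L'}$; consequently any $L'\le L$ with $p_2(L')=p_2(L)$ already equals $L$, and likewise for $M$. Secondly, Lemma~\ref{lem butterfly}(i) applied with $K_1=K_3=1$ yields $k_1(L*M)=1$, $p_1(L*M)=p_1(L)$, and $p_2(L*M)=p_2(M)$, so $L*M$ is itself a graph and the same rigidity applies to it. Thirdly, for every $L'\le L$ and $M'\le M$ one has $k_2(L')\cap k_1(M')\le k_1(M')=1$, so that $\kappa(L',M')=1/|H|$ is constant on $\scrX_N$.

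Plugging this into Theorem~\ref{thm aLMN}, the membership $(L,M)\in\scrY_N$ forces $N\le L*M$ together with $p_2(N)=p_2(M)=p_2(L*M)$, and the second observation then gives $N=L*M$; hence $a^{L,M}_N=0$ for all $N\neq L*M$. For $N=L*M$ the set $\scrY_{L*M}^{L,M}$ collapses to $\{(L,M)\}$, because $p_2(M')=p_2(M)$ combined with $M'\le M$ yields $M'=M$ by the first observation, whereupon $p_2(L')=p_1(M')=p_2(L)$ together with $L'\le L$ gives $L'=L$. Using $\mu^{\scrY_{L*M}}_{(L,M),(L,M)}=1$ and the third observation, Theorem~\ref{thm aLMN} delivers $a^{L,M}_{L*M}=1/|H|$, completing the proof in the first case.

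For the dual case $k_2(L)=k_2(M)=1$, the plan is to pass to opposite groups. One has $k_1(L^\circ)=k_2(L)=1$, $k_1(M^\circ)=k_2(M)=1$, and $p_2(M^\circ)=p_1(M)=p_2(L)=p_1(L^\circ)$, so the first case applies and yields $M^\circ\startilde_H^\kappa L^\circ=\tfrac{1}{|H|}(M^\circ*L^\circ)=\tfrac{1}{|H|}(L*M)^\circ$; combining this with the identity $(L\startilde_H^\kappa M)^\circ=M^\circ\startilde_H^\kappa L^\circ$ from Definition~\ref{def aLMN} and applying $-^\circ$ once more gives the desired formula. The only step that needs real care is the rigidity statement in the first observation, but it is a short direct consequence of the quintuple parametrisation of subgroups of a direct product; once that is in place, the rest is pure Möbius bookkeeping.
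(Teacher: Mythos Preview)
Your proof is correct and follows essentially the same route as the paper's: both invoke Corollary~\ref{cor aLMN} for the case $p_2(L)\neq p_1(M)$, then use Theorem~\ref{thm aLMN} together with the size identity $|L|=|k_1(L)|\cdot|p_2(L)|$ (your ``rigidity'' of graphs) to show first that $N=L*M$ is the only surviving subgroup and then that $\scrY^{L,M}_{L*M}=\{(L,M)\}$, before reducing the dual case via $-^\circ$. Your deduction of $L'=L$ is marginally more direct than the paper's (you read $p_2(L')=p_1(M')=p_2(L)$ straight off the $\scrY$-conditions, whereas the paper passes through $k_1(L'*M')=k_1(L*M)$), but this is a cosmetic difference.
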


\begin{proof}
We only prove the statement under the assumption $k_1(L)=1$ and $k_1(M)=1$. In the other case the result can be derived from the first one by applying $-^\circ$. If $p_2(L)\neq p_1(M)$ then $L\startilde^\kappa_H M = 0$, by Corollary~\ref{cor aLMN}. From now on we assume that $p_2(L)=p_1(M)$. 

First we show that if $N\le G\times K$ is such that $(L,M)\in\scrY_N$ then $N=L*M$. In fact, since $(L,M)\in\scrY_N$, we have $p_2(N)=p_2(M)$. Moreover, Lemma~\ref{lem butterfly}(i) implies that $p_2(M)=p_2(L*M)$ and $1=k_1(L)=k_1(L*M)$. Thus, $p_2(N)=p_2(L*M)$, and $N\le L*M$ implies $k_1(N)=1$. Equation~(\ref{eqn |L|}) yields
\begin{equation*}
  |N|=|k_1(N)|\cdot|p_2(N)| = |k_1(L*M)|\cdot|p_2(L*M)| = |L*M|,
\end{equation*}
and with $N\le L*M$ we obtain $N=L*M$. Thus, the claim holds and, by Definition~\ref{def aLMN}, we obtain $L\startilde^\kappa_H M = a^{L,M}_{L*M}\cdot L*M$. 

Next we compute $a^{L,M}_{L*M}$ using the formula in Theorem~\ref{thm aLMN}. Assume that $(L',M')\in\scrY_{L*M}$ with $(L',M')\le (L,M)$. Then clearly $L*M=L'*M'$. Moreover, since $p_2(L')=p_1(M')$ and $p_2(L)=p_1(M)$, Lemma~\ref{lem butterfly}(i) implies $p_2(M)=p_2(L*M)=p_2(L'*M')=p_2(M')$. Further, $k_1(M')\le k_1(M)=1$ implies $k_1(M')=1$. Now Equation~(\ref{eqn |L|}) yields $M'=M$, and we obtain $p_2(L')=p_1(M')=p_1(M)=p_2(L)$. Finally, Lemma~\ref{lem butterfly}(i) and $k_1(M)=k_1(M')=1$ imply $k_1(L*M)=k_1(L)$ and $k_1(L'*M')=k_1(L')$. Thus $k_1(L)=k_1(L')$, and Equation~(\ref{eqn |L|}) implies $L'=L$. This shows that $a^{L,M}_{L*M} = \kappa(L,M) = 1/|H|$, and the proof of the proposition is complete.
\end{proof}

\begin{remark}
(a) Proposition~\ref{prop left-free} allows us to recover Theorem~4.7 in \cite{BD11}, where a ghost ring for the left-free double Burnside ring $B^\lhd(G,G)$ was introduced. This is the $\ZZ$-span of the standard basis elements $[(G\times G)/L]$ of $B(G,G)$, with $L\le G\times G$ satisfying $k_1(L)=1$. Warning: in \cite{BD11} the mark homomorphism $\rho_{G,G}$ was defined differently, with an additional scaling factor. 

\smallskip
(b) Note that in \cite{BD11} it was possible to define an integral version for the ghost ring of $B^\lhd(G,G)$, cf.~\cite[Lemma~4.5(c)]{BD11}. In order to generalize this result to our situation one would have to find non-zero rational numbers 
$b_L$, for $L\in\scrS_{G\times G}$, such that
the product
\begin{equation*}
  \bigl(b_L\cdot [L]_{G\times G}^+\bigr)\ \startilde_G^\kappa\ \bigl(b_M\cdot [M]_{G\times G}^+)
\end{equation*}
is contained in the $\ZZ$-span of the elements $b_N\cdot[N]_{G\times G}^+$, $N\in\scrS_{G\times G}$. We do not know whether this is possible.
\end{remark}


\section{Twisted category algebras}\label{sec category algebras}
Throughout this section, $R$ denotes a commutative ring and $\catC$ denotes a 
category whose objects form a set, denoted by $\Ob(\catC)$. We denote by $\Mor(\catC)$ the set of morphisms 
of $\catC$.

\smallskip
The main purpose of this section is to recall basic notions associated with Green's theory of idempotent condensation and with twisted category algebras. We will mostly be
concerned with the case where $\catC$ is a finite category, that is, where $\Mor(\catC)$ (and consequently $\Ob(\catC)$) is a finite set. Category algebras generalize the concept of monoid algebras: see \cite{Gr1}, \cite{St},
\cite{L}, \cite{LS} for the development of these topics. In Sections~\ref{sec simples of A}, \ref{sec simple biset functors} and \ref{sec cyclic} we will apply Green's theory, Theorem~\ref{thm simple} (by Linckelmann and Stolorz, cf.~\cite{LS}) and Theorem~\ref{thm matrix} (by Linckelmann, cf.~\cite{L})
to algebras related to the category of biset functors, which in turn are related to the double Burnside algebra by specializing to categories $\catC$ with just one object.

\smallskip
Two morphisms $s\colon X\to Y$ and $t\colon Y'\to Z$ are called 
{\em composable} if $Y=Y'$. In this case we also say that $t\circ s$ {\em exists}.

\begin{definition}\label{defi cat}
(a)\, The {\em category algebra} $R\catC$ is the $R$-algebra defined as follows: 
the underlying $R$-module is free with basis $\Mor(\catC)$. The product $ts$ of 
two morphisms $s$ and $t$ is defined as $t\circ s$ if $t\circ s$ exists, and it 
is defined to be $0$ if $t\circ s$ does not exist.

\smallskip
(b)\, A {\em $2$-cocycle} of $\catC$ with values in the unit group $R^\times$ is a 
function $\alpha$ that assigns to any two morphisms $s$ and $t$ such that 
$t\circ s$ exists an element $\alpha(t,s)\in R^\times$ with the following property:
for any three morphisms $s,t,u$ of $\catC$ such that $t\circ s$ and $u\circ t$ 
exist, one has
$\alpha(u\circ t,s)\alpha(u,t)=\alpha(u,t\circ s)\alpha(t,s)$.
The {\em twisted category algebra} $R_{\alpha}\catC$ is the free $R$-module with
$R$-basis $\Mor(\catC)$, and with 
multiplication defined by
\begin{equation*}
  t\cdot s:=\begin{cases}
            \alpha(t,s)\cdot t\circ s &\text{ if $t\circ s$ exists,}\\
            0 &\text{ otherwise.}
\end{cases}
\end{equation*}
\end{definition}

\begin{remark}\label{rem infinite cat}
If $\catC$ has finitely many objects then the sum of the identity morphisms is an identity element of $R\catC$, and also $R_\alpha\catC$ has an identity element as we will see. If $\catC$ has infinitely many objects then neither of
the $R$-algebras $R\catC$ and $R_\alpha\catC$ has an identity element. 
\end{remark}

\begin{nothing}\label{noth idemp}{\bf Idempotent morphisms in categories.}\,
We recall several notions from \cite{LS}.

\smallskip
(a)\, The set of idempotent endomorphisms
in $\Mor(\catC)$ carries a poset structure:
if $e,f\in\End_{\catC}(X)$
are idempotent morphisms of the same object $X$ of $\catC$ then we set
\begin{equation*}
  e\leq f:\Longleftrightarrow e=e\circ f=f\circ e\,.
\end{equation*}
If $e$ and $f$ are idempotent morphisms of different objects then they are not 
comparable.

\smallskip
(b)\, For any object $X$ of $\catC$ and any idempotent $e\in \End_{\catC}(X)$,
the group of invertible elements in 
$e\circ\End_{\catC}(X)\circ e$ is denoted by $\Gamma_e$, and we set 
$J_e:=e\circ\End_{\catC}(X)\circ e\smallsetminus \Gamma_e$.

\smallskip
(c)\, Let $X$ and $Y$ be objects of $\catC$, and let $e\in\End_{\catC}(X)$
and $f\in\End_{\catC}(Y)$ be idempotents. We call $e$ and $f$ {\em equivalent}
if there exist morphisms $s\in f\circ\Hom_{\catC}(X,Y)\circ e$
and $t\in  e\circ\Hom_{\catC}(Y,X)\circ f$ such that $t\circ s=e$
and $s\circ t=f$; in this case, the morphisms $s$ and $t$ induce mutually
inverse group isomorphisms
\begin{equation*}
  \Gamma_e\to \Gamma_f,\; u\mapsto s\circ u\circ t;\quad \Gamma_f\to \Gamma_e,\; v\mapsto t\circ v\circ s\,.
\end{equation*}
We remark that in \cite{L}, for instance, the idempotents
$e$ and $f$ are called isomorphic rather than equivalent. We will later 
consider categories whose morphisms are finite groups, and it will then be 
important to not confuse isomorphisms between groups with equivalence
of morphisms in the relevant category, whence the different terminology.

\smallskip
(d)\, Assume now that $\catC$ is finite. Let $X$ and $Y$ be objects of $\catC$ and
let $e\in\End_{\catC}(X)$ and $f\in\End_{\catC}(Y)$ be idempotents. The 2-cocycle 
$\alpha$ restricts to 2-cocycles on the groups $\Gamma_e$ and $\Gamma_f$, and we may consider the 
twisted group algebras $R_{\alpha}\Gamma_e$ and $R_{\alpha}\Gamma_f$ as subrings of $R_{\alpha}\catC$. 
Let $T$ be a simple $R_{\alpha}\Gamma_e$-module and let $T'$ be a simple 
$R_{\alpha}\Gamma_f$-module. The pairs $(e,T)$ and $(f,T')$ are called {\em isomorphic} 
if there exist $s$ and $t$ as in (c) such that $t\circ s=e$ and $s\circ t =f$, 
and such that the isomorphism classes of $T$ and $T'$ correspond to each other 
under the $R$-algebra isomorphism $R_{\alpha}\Gamma_e\cong R_{\alpha}\Gamma_f$ induced by $s$ 
and $t$, cf.~\cite[Proposition~ 5.2]{LS} for a precise definition of this isomorphism. 
For different choices of $s$ and $t$ the different isomorphisms differ only by inner 
automorphisms, cf.~\cite[Proposition~5.4]{LS}. Thus, the definition of $(e,T)$ and 
$(f,T')$ being isomorphic does not depend on the choice of $s$ and $t$.
\end{nothing}

\begin{nothing}\label{noth condense}
{\bf Condensation functors.}\, 
We recall Green's theory of idempotent condensation, cf.~\cite[Section~6.2]{Gr2}.

\smallskip
Suppose that $A$ is any ring with identity and that
$e\in A$ is an idempotent. 
Then $eAe$ is a ring with identity $e$ and one has an exact functor
\begin{equation}\label{eqn condense}
  A\textbf{-Mod}\to eAe\textbf{-Mod},\; V\mapsto eV\,,
\end{equation}
which is often called the {\it condensation functor} with respect to $e$.

One also has a functor 
\begin{equation}\label{eqn uncondense}
  eAe\textbf{-Mod}\to A\textbf{-Mod},\; W\mapsto Ae\otimes_{eAe}W\,;
\end{equation}
this functor is in general not exact.

Whenever $S$ is a simple $A$-module, the $eAe$-module $eS$ is either 0
or again simple, and every simple $eAe$-module occurs in this way.
The construction $S\mapsto eS$ induces a bijection between the isomorphism
classes of simple $A$-modules that are not annihilated by $e$ and the 
isomorphism classes of simple $eAe$-modules. Its inverse can be described as
follows: for every simple $eAe$-module $T$, the $A$-module $M:=Ae\otimes_{eAe}T$
has a unique simple quotient $S$, and $eS\cong T$ as $eAe$-modules. Thus, $S=\Hd(M):=M/\Rad(M)$, the {\em head} of $M$, where $\Rad(M)$ denotes the radical of the $A$-module $M$, i.e., the intersection of all its maximal submodules. 
\end{nothing}

With this notation, one also has the following basic statements.

\begin{proposition}\label{prop schur algebras}
Let $A$ be a unitary ring, let $e\in A$ be an idempotent, and let 
$T$ be a simple $eAe$-module. Moreover, set $M:=Ae\otimes_{eAe}T$ and $S:=M/\Rad(M)=\Hd(M)$. Then:

\smallskip
{\rm (a)}\, $eM\cong T$ as $eAe$-modules;

\smallskip
{\rm (b)}\, $e\Rad(M)=\{0\}$;

\smallskip
{\rm (c)}\, $AeM=M$.
\end{proposition}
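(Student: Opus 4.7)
The plan is to establish (a) first, then (c), and finally (b); the last part will rely on both previous ones together with the observation that left multiplication by the idempotent $e$ is an exact functor from $A$-modules to $eAe$-modules, and the fact that the cyclic module $M$ has a non-zero head. The main technical point will be part (a), where the classical identification $eAe \otimes_{eAe} T \cong T$ must be transported to an identification with the submodule $eM$ of $M$; once this is in place, parts (c) and (b) follow essentially formally.

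For part (a), I would exhibit mutually inverse $eAe$-linear maps between $T$ and $eM$. The easy direction is $\Psi\colon T \to eM$, $t \mapsto e \otimes t$, which lands in $eM$ because $e \otimes t = e\cdot(e \otimes t)$, and which is $eAe$-linear by the balancing relation in $Ae \otimes_{eAe} T$. In the opposite direction I would define $\Phi\colon M \to T$ on simple tensors by $ae \otimes t \mapsto (eae)t$; both sides of the balancing relation $ae(exe)\otimes t = ae\otimes(exe)t$ map to $(eaexe)t$, so $\Phi$ is well-defined, and it annihilates $(1-e)M$ because $e(1-e)=0$, so it restricts to a map $eM \to T$. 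A direct check then shows $\Phi\circ\Psi = \id_T$ (using that $e$ acts as the identity on $T$) and $\Psi\circ\Phi|_{eM} = \id_{eM}$. Part (c) follows immediately from the same observation used for $\Psi$: every simple tensor $ae\otimes t = a\cdot(e\otimes t)$ lies in $A\cdot eM$, so $M = AeM$.

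For part (b), I would apply the exact functor $e\cdot(-)$ to the short exact sequence $0 \to \Rad(M) \to M \to S \to 0$ to obtain the short exact sequence $0 \to e\Rad(M) \to eM \to eS \to 0$ of $eAe$-modules (exactness of $e\cdot(-)$ comes from the fact that for any submodule $N'\subseteq N$ one has $eN' = N'\cap eN$, since $e$ is a projection). By (a), $eM \cong T$ is simple, so $e\Rad(M)$ is either $0$ or $eM$; the second case is impossible because it would force $eS = 0$, hence $eM \subseteq \Rad(M)$, and then by (c) one would get $M = AeM \subseteq A\Rad(M) = \Rad(M)$, contradicting the fact that the non-zero cyclic $A$-module $M$ (generated by $e \otimes t_0$ for any generator $t_0$ of the simple $eAe$-module $T$) admits a maximal submodule by Zorn's lemma.
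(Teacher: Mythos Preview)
Your proof is correct and, in fact, more self-contained than the paper's. The paper proceeds in a different order and leans on the reference to Green: it cites \cite[Section~6.2]{Gr2} for (a), observes that (a) and (b) are equivalent because $eS\cong T$ (a fact from Green's theory recalled in \ref{noth condense}), and then derives (c) from (a) and (b) using that $\Rad(M)$ is the \emph{unique} maximal submodule of $M$ (another consequence of Green's results), arguing that $AeM\subseteq\Rad(M)$ would force $(eAe)\cdot T=\{0\}$.

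Your route is different at two points. First, your proof of (c) is completely direct and needs nothing: the observation $ae\otimes t = a\cdot(e\otimes t)\in A\cdot eM$ already gives $M=AeM$, without any appeal to (a), (b), or the structure of $\Rad(M)$. Second, rather than invoking Green for (a), you build the isomorphism $eM\cong T$ by hand via the maps $\Psi$ and $\Phi$, and then use (a) and (c) together with the cyclicity of $M$ (hence existence of a maximal submodule via Zorn) to rule out $e\Rad(M)=eM$. This makes your argument independent of the external reference, at the modest cost of writing out the tensor identification explicitly. The paper's version is shorter precisely because it outsources that identification and the uniqueness of the maximal submodule to \cite{Gr2}.
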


\begin{proof}
Since $eS\cong T$ as $eAe$-modules, Parts~(a) and (b) are equivalent. Moreover, Part~(a) is proved in \cite[Section~6.2]{Gr2}.
As for (c), recall that either $AeM=M$ or $AeM\subseteq\Rad(M)$, since
$AeM$ is an $A$-submodule of $M$. If $AeM\subseteq\Rad(M)$
then $\{0\}=eAeM=(eAe)\cdot eM$, by (b), thus $\{0\}=(eAe)\cdot T=T$, by (a), which is impossible, whence (c).
\end{proof}

In Section \ref{sec simple biset functors} we will have to compare 
the annihilator of the simple $A$-module $S$ with the annihilator of the 
simple $eAe$-module $T$. The assertion of the following lemma is probably well known; 
we do, however, include a proof for the reader's convenience.

\begin{lemma}\label{lem Schur ann}
Let $A$ be a unitary ring, let $e\in A$ be an idempotent, let
$T$ be a simple $eAe$-module, and let $S$ be the simple head
of the $A$-module $Ae\otimes_{eAe} T$. Then, for $x\in A$, one has
\begin{equation*}
  xS=\{0\}\ \text{ if and only if }\ (eAxAe)T=\{0\}\,.
\end{equation*}
\end{lemma}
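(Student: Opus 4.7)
The plan is to prove the equivalence by a direct calculation using the three facts supplied by Proposition~\ref{prop schur algebras}: $eM\cong T$ as $eAe$-modules, $e\Rad(M)=\{0\}$, and $AeM=M$. The crucial identity which ties both sides together is
\begin{equation*}
  (eAxAe)\cdot T = (eAxAe)\cdot eM = eAxAeM = eAx(AeM) = eAxM\,,
\end{equation*}
using $e^2=e$ in the second step and $AeM=M$ in the fourth step. Thus $(eAxAe)T=\{0\}$ if and only if $eAxM=\{0\}$, and the task reduces to showing that $xS=\{0\}$ is equivalent to $eAxM=\{0\}$.

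For the forward direction, I would observe that $xS=\{0\}$ translates to $xM\subseteq\Rad(M)$; since $\Rad(M)$ is an $A$-submodule, this gives $AxM\subseteq\Rad(M)$, and applying $e$ yields $eAxM\subseteq e\Rad(M)=\{0\}$ by Proposition~\ref{prop schur algebras}(b).

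For the converse, I would use that $S=\Hd(M)$ is simple, so $\Rad(M)$ is the \emph{unique} maximal $A$-submodule of $M$. Hence, if $xM\not\subseteq\Rad(M)$, then $AxM+\Rad(M)$ is an $A$-submodule strictly larger than $\Rad(M)$, forcing $AxM+\Rad(M)=M$. Applying $e$ and using $e\Rad(M)=\{0\}$ and $eM\cong T$ gives $eAxM=eM\cong T\neq\{0\}$. Contrapositively, $eAxM=\{0\}$ implies $xM\subseteq\Rad(M)$, i.e.\ $xS=\{0\}$.

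The only step requiring a moment of care is the backward direction, where one must know that $\Rad(M)$ is actually the unique maximal submodule of $M$ (not merely the intersection of all maximal submodules); this follows from the fact, recorded in \ref{noth condense}, that $M=Ae\otimes_{eAe}T$ has a unique simple quotient. Once this is in place, the proof is a short manipulation, and the identity $(eAxAe)T=eAxM$ made possible by $AeM=M$ is the real content.
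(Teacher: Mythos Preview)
Your proof is correct and follows essentially the same route as the paper's. Both arguments pivot on the identity $(eAxAe)T=eAxM$ (via $AeM=M$ and $eM\cong T$) and on the fact that $\Rad(M)$ is the unique maximal submodule; the only cosmetic difference is that the paper phrases the backward step as ``$AxM$ is a submodule, hence either equal to $M$ or contained in $\Rad(M)$'', whereas you argue via $AxM+\Rad(M)=M$, which amounts to the same thing.
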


\begin{proof}
Again we set $M:=Ae\otimes_{eAe}T$.
Since $S=M/\Rad(M)$, we infer that $xS=\{0\}$ if and only if $xM\subseteq \Rad(M)$, which in turn
is equivalent to $AxM\subseteq \Rad(M)$, since $\Rad(M)$ is an $A$-submodule
of $M$. Since $AxM$ is an $A$-submodule of $M$, it is either equal to $M$ or is
contained in $\Rad(M)$. Thus, by Proposition \ref{prop schur algebras} (b), $AxM\subseteq\Rad(M)$ if
and only if $eAxM=\{0\}$. By Proposition \ref{prop schur algebras} (c), we have
$eAxM=eAxAeM=(eAxAe)eM$, and by Proposition \ref{prop schur algebras} (a), $eM\cong T$ as
$eAe$-modules. This now implies that $eAxM=\{0\}$ if and only if $(eAxAe)\cdot T=\{0\}$, and the proof of the lemma is complete.
\end{proof}

\begin{nothing}\label{noth condense cat alg}
{\bf Condensation functors and twisted category algebras.}\, 
Now consider the case where $A=R_\alpha\catC$, for
a finite category $\catC$ and a $2$-cocycle $\alpha$
of $\catC$ with coefficients in $R^\times$. Theorem \ref{thm simple}
below is due to Linckelmann--Stolorz and describes how 
the simple $A$-modules can be constructed via the functor in (\ref{eqn uncondense}).
In order to state the theorem, we fix some further notation:
let $e\in\End_{\catC}(X)$ be an idempotent endomorphism in $\catC$. Then
$$e':=\alpha(e,e)^{-1}e$$ 
is an idempotent in $R_\alpha\catC$, and we
also have 
$e'\cdot R_\alpha\catC\cdot e'=R_\alpha(e\circ\End_{\catC}(X)\circ e)$.
One has an $R$-module decomposition
\begin{equation}\label{eqn decomp}
  R_\alpha(e\circ\End_{\catC}(X)\circ e)=R_\alpha\Gamma_e \oplus RJ_e\,,
\end{equation}
where the second summand denotes the $R$-span of $J_e$. Note that $RJ_e$ is an ideal and that
$R_{\alpha}\Gamma_e$ is a unitary $R$-subalgebra of 
$e'\cdot R_\alpha\catC\cdot e'$. For every
$R_\alpha\catC$-module $V$, the condensed module $e'V$ becomes
an $R_{\alpha}\Gamma_e$-module by restriction from $R_\alpha(e\circ\End_{\catC}(X)\circ e)$ to 
$R_\alpha \Gamma_e$. Conversely, given an $R_{\alpha}\Gamma_e$-module $W$,
we obtain the $R_\alpha\catC$-module
$R_\alpha\catC e'\otimes_{e' R_\alpha\catC e'}\Wtilde$, where $\Wtilde$ denotes the inflation of $W$ from $R_\alpha\Gamma_e$ to $e'R_\alpha\catC e'$ with respect to the decomposition~(\ref{eqn decomp}).
\end{nothing}

With this notation one has the following theorem, due to Linckelmann and Stolorz.

\begin{theorem}[\cite{LS}, Theorem 1.2]\label{thm simple}
Let $\catC$ be a finite category, and let $\alpha$ be a $2$-cocycle of $\catC$ with coefficients
in $R^\times$. There is a bijection between the set of isomorphism
classes of simple $R_\alpha\catC$-modules and the set of
isomorphism classes of pairs $(e,T)$, where $e$ is an idempotent
endomorphism in $\catC$ and $T$ is a simple $R_{\alpha}\Gamma_e$-module;
the isomorphism class of a simple $R_\alpha\catC$-module $S$ is
sent to the isomorphism class of the pair $(e,e'S)$, where
$e$ is an idempotent endomorphism that is minimal
with the property $e'S\neq \{0\}$. Conversely, the
isomorphism class of a pair $(e,T)$ is sent to the isomorphism class
of the $R_\alpha\catC$-module 
$\Hd(R_\alpha\catC e'\otimes_{e' R_\alpha\catC e' }\Ttilde)$.
\end{theorem}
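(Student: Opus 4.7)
The plan is to derive Theorem~\ref{thm simple} by combining Green's idempotent-condensation theory (recalled in~\ref{noth condense} and Proposition~\ref{prop schur algebras}) with a local analysis of the subalgebra $e'R_\alpha\catC e'$. Write $A := R_\alpha\catC$. Since $\Ob(\catC)$ is finite, the cocycle relation applied with $u=t=\id_X$ yields that $\sum_{X\in\Ob(\catC)} (\id_X)'$ is an identity element of $A$; consequently, for any simple $A$-module $S$, the (finite, non-empty) set of idempotent endomorphisms $e$ of $\catC$ with $e'\cdot S\neq\{0\}$ admits a minimal element in the poset of~\ref{noth idemp}(a). Green's theorem then guarantees that $e'\cdot S$ is a simple $e'Ae'$-module and that $S$ is the unique simple head of $Ae'\otimes_{e'Ae'}(e'\cdot S)$.

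To refine $e'\cdot S$ to a module over the twisted group algebra $R_\alpha\Gamma_e$, I would use the decomposition $e'Ae' = R_\alpha\Gamma_e \oplus RJ_e$ of~\ref{noth condense cat alg}. A short verification establishes that $RJ_e$ is a two-sided ideal of $e'Ae'$: if $j$ is non-invertible in the finite monoid $e\circ\End_\catC(X)\circ e$ and $a$ is any element of this monoid, then $j\circ a$ and $a\circ j$ are non-invertible, for otherwise $j$ would have a one-sided inverse in a finite monoid, which forces two-sided invertibility. Consequently $e'Ae'/RJ_e\cong R_\alpha\Gamma_e$ as $R$-algebras, and inflation along this projection provides the functor opposite to restriction.

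The main obstacle is to show that $RJ_e$ annihilates $e'\cdot S$, so that $T := e'\cdot S$ descends to a simple $R_\alpha\Gamma_e$-module. Since $e'S$ is simple over $e'Ae'$, the submodule $RJ_e\cdot e'S$ equals either $\{0\}$ or $e'S$, and I would rule out the latter via the minimality of $e$. In the finite monoid $e\circ\End_\catC(X)\circ e$ every $j\in J_e$ has an idempotent power $j^N$, which (because $J_e$ is closed under composition) lies in $J_e$ and is therefore an idempotent strictly below $e$ in the poset of~\ref{noth idemp}(a). An iteration argument then produces, from the assumption $RJ_e\cdot e'S = e'S$, some strictly smaller idempotent $f$ satisfying $f'\cdot S\neq\{0\}$, contradicting the minimality of $e$.

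For the converse, given a pair $(e,T)$, I inflate $T$ along $e'Ae'\twoheadrightarrow R_\alpha\Gamma_e$ to obtain $\Ttilde$, form $Ae'\otimes_{e'Ae'}\Ttilde$, and set $S := \Hd(Ae'\otimes_{e'Ae'}\Ttilde)$. Proposition~\ref{prop schur algebras}(a) yields $e'\cdot S\cong T$ as $R_\alpha\Gamma_e$-modules, and Green's theorem ensures that $S$ is a simple $A$-module with $e$ minimal for $e'\cdot S\neq\{0\}$ (any strictly smaller $f$ would give $f'\cdot S=0$, since $f'e'Ae'$ maps into $RJ_e$, which acts trivially on $T$ by construction). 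The remaining book-keeping verifies that the two assignments are mutually inverse on isomorphism classes, with the equivalence of idempotents and isomorphism of pairs in~\ref{noth idemp}(c)--(d) absorbing the ambiguity in the choice of the minimal idempotent $e$.
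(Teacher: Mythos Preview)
The paper does not prove this theorem; it is quoted from \cite{LS} without proof, so there is no argument in the paper to compare against. Your sketch follows the natural strategy and gets the architecture right, but two steps are genuine gaps.

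First, the ``iteration argument'' showing that $RJ_e$ annihilates $e'S$ when $e$ is minimal is more delicate than you indicate. From $RJ_e\cdot e'S=e'S$ you obtain some $j\in J_e$ with $j\cdot S\neq\{0\}$, and $j$ has an idempotent power $f=j^{\circ N}<e$. But in general $f\circ j\neq j$ (this holds only when $j$ has index~$1$ in the finite monoid $e\circ\End_\catC(X)\circ e$), so $j\cdot S\neq\{0\}$ does not directly yield $f\cdot S\neq\{0\}$. A correct argument iterates to $(RJ_e)^k\cdot e'S=e'S$, passes to the stable image $J_\infty:=J_e^{\circ k}$ (a subsemigroup with $J_\infty\circ J_\infty=J_\infty$), and then uses finiteness to produce $a\in J_\infty$ with $a\cdot S\neq\{0\}$ together with $y\in J_\infty$ satisfying $y\circ a=a$; the idempotent power of $y$ is then the required $f$. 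This is real content that your sketch elides.

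Second, and more seriously, you have not established that the forward map $S\mapsto(e,e'S)$ is well defined on isomorphism classes. A simple module $S$ may admit several minimal idempotents $e,f$ with $e'S,f'S\neq\{0\}$, for instance at different objects of $\catC$, where they are incomparable in the poset of~\ref{noth idemp}(a). One must prove that the resulting pairs $(e,e'S)$ and $(f,f'S)$ are isomorphic in the sense of~\ref{noth idemp}(d). Your argument in the converse direction only shows that no $f<e$ satisfies $f'S\neq\{0\}$, not that every other minimal idempotent is equivalent to $e$. Producing morphisms $s,t$ witnessing this equivalence and the compatibility of $e'S$ with $f'S$ is where much of the work in \cite{LS} lies; without it the two assignments are not shown to be mutually inverse.
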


\begin{definition}[\cite{Ka}]\label{defi inverse}
The category $\catC$ is called
an {\it inverse category} if, for every $X,Y\in\Ob(\catC)$ and
every $s\in\Hom_{\catC}(X,Y)$, there is a unique $\shat\in\Hom_{\catC}(Y,X)$
such that
\begin{equation*}
  s\circ\shat\circ s=s\quad \text{ and }\quad \shat\circ s \circ \shat = \shat\,.
\end{equation*}
\end{definition}

\begin{nothing}\label{noth inverse cat}
{\bf Category algebras of inverse categories.}\,
From now on, let $\catC$ be a finite inverse category. Note that if $s$ is a morphism in 
$\catC$ then $\hat{\shat} = s$ and the morphisms $\shat\circ s$ and $s\circ\shat$ are idempotent morphisms.
We recall several constructions from \cite{L}, which are based on constructions in~\cite{St} for inverse monoids.

\smallskip
(a)\, One can extend the poset structure on the set of idempotent morphisms of $\Mor(\catC)$ from~\ref{noth idemp}(a) to the set $\Mor(\catC)$ of all morphisms: for
$X,Y\in\Ob(\catC)$ and $s,t\in\Hom_{\catC}(X,Y)$, one defines
\begin{equation*}
  s\leq t:\Longleftrightarrow s=t\circ e,\text{ for some idempotent } e\in \End_{\catC}(X)\,.
\end{equation*}
If $s$ and $t$ are morphisms with different domain or codomain then they are incomparable.
For $s\in\Mor(\catC)$ we consider the element
\begin{equation*}
  \underline{s}:= \sum_{t\le s} \mu_{t,s} t\in R\catC
\end{equation*}
where $\mu_{t,s}$ denotes the M\"obius function with respect to the partial order on $\Mor(\catC)$. 
Note that the elements in
$\{\underline{s}\mid s\in\Mor(\catC)\}$ form an $R$-basis of $R\catC$. 

\smallskip
(b)\, Let $X$ and $Y$ be objects of $\catC$ and let $e\in\End_{\catC}(X)$ and $f\in\End_{\catC}(Y)$ be idempotent endomorphisms.
Then \ref{noth idemp}(c) and Definition \ref{defi inverse}
show that $e$ and $f$ are equivalent if and only if there exists some
$s\in f\circ\Hom_{\catC}(X,Y)\circ e$ such that
$\shat\circ s=e$ and $s\circ \shat = f$. 
Moreover, the group $\Gamma_e$ then consists of those
endomorphisms $u\in\End_{\catC}(X)$ with
$\hat{u}\circ u=e=u\circ\hat{u}$.

\smallskip
(c)\, From the inverse category $\catC$ one can construct another finite category $G(\catC)$ 
as follows:

\smallskip
\quad (i)\, the objects in $G(\catC)$ are the pairs $(X,e)$, where
$X\in\Ob(\catC)$ and $e$ is an idempotent in $\End_{\catC}(X)$;

\quad (ii)\, the morphisms in $G(\catC)$ are triples $(f,s,e):(X,e)\to (Y,f)$,
where $s\in f\circ\Hom_{\catC}(X,Y)\circ e$ is such that $\shat\circ s=e$ and $s\circ\shat=f$; 
in particular, $e$ and $f$ are equivalent endomorphisms in $\catC$; 

\quad (iii)\, if $(f,s,e):(X,e)\to (Y,f)$ and $(g,t,f):(Y,f)\to (Z,g)$
are morphisms in $G(\catC)$ then their composition 
$(X,e)\to (Z,g)$
is defined as $(g,t,f)\circ (f,s,e):=(g,t\circ f\circ s,e)$.

\smallskip
Note that $G(\catC)$ is a {\em groupoid}, i.e., a category in which every morphism is an
isomorphism.
\end{nothing}

\begin{lemma}[\cite{L}, Theorem 4.1; \cite{St}, Theorem 4.2]\label{lemma cat iso}
Let $\catC$ be a finite inverse category. Then the category algebras $R\catC$ and $RG(\catC)$ are
isomorphic. More precisely, the maps
\begin{equation*}
RG(\catC)\to R\catC\,,\ (f,s,e)\mapsto \underline{s}\,,\quad \text{and} \quad
R\catC\to RG(\catC)\,,\ s\mapsto \sum_{t\leq s}(t\circ\that,t,\that\circ t)\,,
\end{equation*}
define mutually inverse $R$-algebra isomorphisms. 
\end{lemma}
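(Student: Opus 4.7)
The plan is to verify three things: that $\Phi\colon RG(\catC)\to R\catC$, $(f,s,e)\mapsto\underline{s}$, and $\Psi\colon R\catC\to RG(\catC)$, $s\mapsto\sum_{t\le s}(t\circ\that,t,\that\circ t)$, are mutually inverse $R$-linear bijections; that $\Phi$ preserves the identity; and that $\Phi$ is multiplicative. Since $\Phi$ and $\Psi$ will be mutually inverse, verifying multiplicativity of one forces it on the other.

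For the first claim I would use M\"obius inversion: the defining equation $\underline{s}=\sum_{t\le s}\mu_{t,s}\cdot t$ inverts to $s=\sum_{t\le s}\underline{t}$, confirming that $\{\underline{s}\mid s\in\Mor(\catC)\}$ is an $R$-basis of $R\catC$. Each morphism $s\in\Mor(\catC)$ determines a unique triple $(s\circ\shat,s,\shat\circ s)\in\Mor(G(\catC))$ via the uniqueness clause in Definition~\ref{defi inverse}; thus $\Phi$ is an $R$-linear bijection of bases, and the computation $\Phi(\Psi(s))=\sum_{t\le s}\underline{t}=s$ shows $\Psi=\Phi^{-1}$. For the identity, the identity element of $RG(\catC)$ is $\sum_{(X,e)}(e,e,e)$, which maps under $\Phi$ to $\sum_e\underline{e}$. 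Because $\End_\catC(X)$ is an inverse monoid for every object $X$ (the axioms in Definition~\ref{defi inverse} restricted to $X=Y$), its idempotents commute, and every morphism $\le\id_X$ in $\Mor(\catC)$ is itself an idempotent endomorphism of $X$; hence the M\"obius identity $\id_X=\sum_{e\le\id_X}\underline{e}$, summed over $X$, yields $\Phi(1_{RG(\catC)})=\sum_X\id_X=1_{R\catC}$.

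The main effort goes into multiplicativity. Given composable groupoid morphisms $(f,s,e)\colon(X,e)\to(Y,f)$ and $(g,t,f')\colon(Y,f')\to(Z,g)$, their product is $(g,t\circ f\circ s,e)$ when $f=f'$ and zero otherwise; using $\that\circ t=f=s\circ\shat$ together with $t\circ\that\circ t=t$ and $s\circ\shat\circ s=s$ collapses $t\circ f\circ s$ to $t\circ s$. So the required identity in $R\catC$ is: for $s,t\in\Mor(\catC)$ with $\mathrm{cod}(s)=\mathrm{dom}(t)=:Y$,
\begin{equation*}
  \underline{t}\cdot\underline{s}=\begin{cases}\underline{t\circ s}&\text{if }s\circ\shat=\that\circ t,\\ 0&\text{otherwise.}\end{cases}
\end{equation*}
My approach is to expand the left side as $\sum_{t'\le t}\sum_{s'\le s}\mu_{t',t}\mu_{s',s}\cdot(t'\circ s')$, write $t'=t\circ\varepsilon$ and $s'=s\circ\delta$ for idempotents $\varepsilon\in\End_\catC(\mathrm{dom}(t))$ and $\delta\in\End_\catC(\mathrm{dom}(s))$, and use commutativity of idempotents in the inverse monoid $\End_\catC(Y)$ to rewrite each $t'\circ s'$ uniquely as $t\circ\gamma\circ s$ with $\gamma\in\End_\catC(Y)$ idempotent, $\gamma\le s\circ\shat$, and $\gamma\le\that\circ t$. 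Reindexing by $\gamma$ and invoking M\"obius inversion inside the semilattice of idempotents of $Y$ collapses the nested sums; the resulting coefficient vanishes unless the meet $s\circ\shat\wedge\that\circ t$ equals both $s\circ\shat$ and $\that\circ t$, i.e.\ unless $s\circ\shat=\that\circ t$, in which case a final M\"obius step yields $\underline{t\circ s}$.

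The chief obstacle is organizing the reparameterization $(\varepsilon,\delta)\mapsto\gamma$ precisely and verifying that the nested M\"obius sums telescope as claimed; both steps rest on the commutativity of idempotents and on the uniqueness of $\shat$ in Definition~\ref{defi inverse}. Should the direct computation prove cumbersome, an alternative route is to verify multiplicativity of $\Psi$ instead by computing $\Psi(t)\cdot\Psi(s)$ in $RG(\catC)$: many groupoid products vanish automatically because the matching middle-idempotent condition fails for most pairs $(t'\le t, s'\le s)$, which reduces the analysis to checking when a pair yields composable triples and may be combinatorially more transparent.
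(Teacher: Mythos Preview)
The paper does not supply its own proof of this lemma; it is quoted with attribution to \cite{L} and \cite{St}, and no argument is given in the text. There is therefore nothing in the paper to compare your proposal against.

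That said, your sketch is essentially the standard proof in those references, and the strategy is correct. One point deserves tightening: when you write $s'=s\circ\delta$ with $\delta$ an idempotent in $\End_\catC(\mathrm{dom}(s))$ and then assert $t'\circ s'=t\circ\gamma\circ s$ with $\gamma\in\End_\catC(Y)$, you are silently using that in an inverse category one may equally write $s'=\delta'\circ s$ with $\delta'=s\circ\delta\circ\shat\in\End_\catC(Y)$ idempotent and $\delta'\le s\circ\shat$. This passage from the right-idempotent description of $s'\le s$ to the left-idempotent description is what lets both idempotents live in $\End_\catC(Y)$ so that $\gamma=\varepsilon\circ\delta'$ makes sense; it should be stated explicitly. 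Also, the map $(\varepsilon,\delta')\mapsto\gamma=\varepsilon\wedge\delta'$ is not injective, so the ``uniquely'' in your reparameterization refers only to recovering $\gamma$ from $t\circ\gamma\circ s$ via $\gamma=\that\circ(t\circ\gamma\circ s)\circ\shat$; the actual telescoping requires summing the M\"obius weights over each fibre, which is the standard meet-semilattice M\"obius computation you allude to. With those clarifications the argument goes through.
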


\begin{remark}\label{rem groupoid algebra}
It is well known that the category algebra $R\catD$ of a finite groupoid $\catD$ 
is isomorphic to a direct product of matrix algebras over group algebras. More precisely, 
let $E$ be a set of representatives of the isomorphism classes of objects $e$ of $\catD$, 
and for each $e\in E$ let $n(e)$ denote the number of objects isomorphic to $e$.
Since $\catD$ is a groupoid, every endomorphism of $e\in E$ is an automorphism of $e$,
and $\id_e$ is the unique idempotent in $\End_\catD(e)$; in particular, we have
$\Gamma_{\id_e}=\End_\catD(e)$.
For convenience we may thus denote $\End_\catD(e)$ by $\Gamma_e$.

Note that, in the case where $\catD=G(\catC)$ for some finite inverse category
$\catC$, this is consistent with \ref{noth idemp}(b): if $(X,e)\in G(\catC)$ then
$\End_{G(\catC)}((X,e))=\{(e,s,e)\mid s\in e\circ\End_\catC(X)\circ e,\ \shat\circ s=e=s\circ\shat\}$,
and $\{s\in e\circ\End_\catC(X)\circ e\mid \shat \circ s=e= s\circ\shat \}$ is the set of
invertible elements in $e\circ \End_C(X)\circ e$, thus equal to $\Gamma_e$ as defined 
in \ref{noth idemp}(b).

Suppose again that $\catD$ is an arbitrary finite groupoid.
Then there exists an $R$-algebra isomorphism
\begin{equation}\label{eqn groupoid iso}
  \epsilon\colon R\catD\to \bigtimes_{e\in E} \Mat_{n(e)} (R\Gamma_e),
\end{equation}
which can be defined as follows: we fix $e\in E$, denote by $e=e_1,\ldots,e_{n(e)}$ the objects that lie in the isomorphism class of $e$, and choose an 
isomorphism $s_i\colon e\to e_i$ for every $i\in\{1,\ldots,n(e)\}$. Let $s$ be a morphism in $\catD$. 
If the domain of $s$ is not isomorphic to $e$ then the matrix in the $e$-component of $\epsilon(s)$ 
is defined to be $0$. If the domain of $s$ is isomorphic to $e$ then there exist unique elements 
$i,j\in\{1,\ldots,n(e)\}$ such that $s\colon e_j\to e_i$. In this case the $e$-component of $\epsilon(s)$ 
is the matrix all of whose entries are equal to $0$ except for the $(i,j)$-entry, which is equal to 
$s_i^{-1}\circ s\circ s_j$. The isomorphism $\epsilon$ depends on the ordering of the elements in 
an isomorphism class and on the choices of the isomorphisms $s_i$. 
However, any two isomorphisms defined as above differ only by an inner automorphism.
\end{remark}

Combining the two isomorphisms in Lemma~\ref{lemma cat iso} and 
Remark~\ref{rem groupoid algebra} one obtains the following theorem, due to Linckelmann, 
cf.~\cite[Corollary 4.2]{L}.

\begin{theorem}\label{thm matrix}
Let $\catC$ be a finite inverse category. Let $E$ be a set of representatives of the equivalence
classes of idempotents in $\Mor(\catC)$ and, for $e\in E$, let $n(e)$ denote the cardinality of the equivalence class of $e$. 
Then there exists an $R$-algebra isomorphism
\begin{equation}\label{eqn omega}
  \omega\colon R\catC\to \bigtimes_{e\in E} \Mat_{n(e)} (R\Gamma_e)\,.
\end{equation}
\end{theorem}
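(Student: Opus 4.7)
The plan is to combine the two isomorphisms already recorded in the excerpt: the isomorphism $R\catC \cong RG(\catC)$ from Lemma~\ref{lemma cat iso} and the general groupoid decomposition from Remark~\ref{rem groupoid algebra}. Since $G(\catC)$ is a groupoid by construction (cf.~\ref{noth inverse cat}(c)), the latter applies and yields an isomorphism
\begin{equation*}
  \epsilon\colon RG(\catC) \liso \bigtimes_{(X,e)\in E'} \Mat_{n(X,e)}\bigl(R\,\End_{G(\catC)}((X,e))\bigr)\,,
\end{equation*}
where $E'$ is a transversal for the isomorphism classes of objects of $G(\catC)$ and $n(X,e)$ denotes the size of the isomorphism class of $(X,e)$. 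The composition of $\epsilon$ with the isomorphism of Lemma~\ref{lemma cat iso} produces the required map $\omega$.

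The main step is then to translate the right-hand side of $\epsilon$ into the statement of the theorem. For this I would verify two identifications. First, two objects $(X,e)$ and $(Y,f)$ of $G(\catC)$ are isomorphic in $G(\catC)$ if and only if the idempotents $e$ and $f$ are equivalent in the sense of \ref{noth idemp}(c): by definition of the morphisms in $G(\catC)$, an isomorphism $(X,e)\to(Y,f)$ is a triple $(f,s,e)$ with $s\in f\circ\Hom_\catC(X,Y)\circ e$, $\shat\circ s=e$, and $s\circ\shat=f$; by \ref{noth inverse cat}(b), this is precisely the condition defining equivalence of $e$ and $f$ in the inverse category $\catC$. Consequently the index sets $E'$ and $E$ are in canonical bijection $(X,e)\mapsto e$, and under this bijection $n(X,e)$ equals the cardinality $n(e)$ of the equivalence class of $e$.

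Second, for an object $(X,e)$ of $G(\catC)$ one has $\End_{G(\catC)}((X,e)) = \Gamma_e$ as defined in \ref{noth idemp}(b). This is explicitly checked in the parenthetical remark within Remark~\ref{rem groupoid algebra}: an endomorphism of $(X,e)$ is a triple $(e,s,e)$ with $s\in e\circ\End_\catC(X)\circ e$ and $\shat\circ s=e=s\circ\shat$, and in an inverse category the set of such $s$ coincides with the group of units of $e\circ\End_\catC(X)\circ e$, namely $\Gamma_e$. Composition in $G(\catC)$ restricts to the group operation on $\Gamma_e$, so this is an equality of groups and hence an equality of the corresponding group $R$-algebras.

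With both identifications in hand, the target of $\epsilon$ becomes $\bigtimes_{e\in E}\Mat_{n(e)}(R\Gamma_e)$, and defining $\omega$ as the composition completes the proof. No substantial obstacle arises: the only nontrivial content is already packaged in Lemma~\ref{lemma cat iso} and the standard groupoid algebra decomposition, and the equivalence-of-idempotents versus isomorphism-of-objects dictionary is immediate from the definitions once one notes the characterization of equivalence peculiar to inverse categories in \ref{noth inverse cat}(b).
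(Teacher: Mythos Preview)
Your proposal is correct and follows exactly the route the paper takes: the paper states just before Theorem~\ref{thm matrix} that the result is obtained by combining the isomorphism of Lemma~\ref{lemma cat iso} with the groupoid decomposition of Remark~\ref{rem groupoid algebra}, and offers no further argument. Your write-up simply makes explicit the two identifications (equivalence classes of idempotents in $\catC$ versus isomorphism classes of objects of $G(\catC)$, and $\End_{G(\catC)}((X,e))\cong\Gamma_e$) that the paper leaves to the reader, relying on the same passages in \ref{noth inverse cat}(b),(c) and Remark~\ref{rem groupoid algebra}.
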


\begin{remark}\label{rem omega}
(a) Using the explicit description of the isomorphism $\epsilon$ in 
Remark~\ref{rem groupoid algebra} we obtain the following description of the $e$-component 
of the isomorphism $\omega$ in Theorem~\ref{thm matrix}: let $e=e_1,\ldots,e_{n(e)}$ denote the 
idempotent morphisms of $\catC$ that are equivalent to $e$ and for every $i\in\{1,\ldots, n(e)\}$ let $s_i$ 
be a morphism such that $\shat_i\circ s_i = e$ and $s_i\circ\shat_i=e_i$. For every morphism 
$s$ of $\catC$, the $e$-component of $\omega(\underline{s})$ is defined to be $0$ if the idempotent 
$\shat\circ s$ is not equivalent to $e$, and if it is equivalent to $e$ then there exist unique 
elements $i,j\in\{1,\ldots,n(e)\}$ such that $\shat\circ s = e_j$ and $s\circ\shat= e_i$. In 
this case the $e$-component of $\omega(\underline{s})$ is the matrix all of whose entries are equal to $0$ except for
the $(i,j)$-entry, which is equal to $s_i \circ s\circ \shat_j$. 

\smallskip
(b) It follows immediately from (a) that the central idempotent of $R\catC$ corresponding to the 
identity matrix in the $e$-component, for $e\in E$ and with the notation introduced in (a), is equal to 
\begin{equation*}
 \tilde{e}:= \sum_{i=1}^{n(e)} \underline{e_i}\in R\catC\,.
\end{equation*}
This element is independent of the choices made when constructing the isomorphism $\omega$. Moreover,
 the $e$-th component $\Mat_{n(e)}(R\Gamma_e)$ in (\ref{eqn omega}) corresponds to a two-sided ideal $I_e= R\catC\cdot \tilde{e}$, which is independent of the choices involved in the definition of $\omega$, and one has $R\catC=\bigoplus_{e\in E} I_e$. By (a), the elements $\underline{s}\in R\catC$, where $s$ runs through all morphisms in $\catC$ such that the idempotent $\shat\circ s$ is equivalent to $e$, form an $R$-basis of $I_e$.
\end{remark}

We will use Theorems \ref{thm simple} and \ref{thm matrix} on category algebras in two situations.
The following example describes the first one; it will be used in Sections~\ref{sec simples of A} and \ref{sec simple biset functors}.

\begin{example}\label{ex Bouc algebra}
(a) We denote by $\catB$ a category with the following properties: its objects form a set of finite groups 
such that every isomorphism type of finite group is represented
in $\Ob(\catB)$. The morphisms in $\catB$ are defined by
\begin{equation*}
  \Hom_{\catB}(H,G):=\scrS_{G\times H}\,,
\end{equation*}
for $G,H\in\Ob(\catB)$,
and the composition is defined by
\begin{equation*}
  L\circ M:=L*M\,,
\end{equation*}
for $L\in \scrS_{G\times H}$, $M\in\scrS_{H\times K}$, with $G,H,K\in\Ob(\catB)$. 
The identity element of the object $G$ of $\catB$ is equal to $\Delta(G)$. 

\smallskip
(b) In the sequel we assume that $\catC$ is a subcategory of $\catB$. For $G,H\in\Ob(\catC)$, 
we set $\catC_{G,H}:=\Hom_{\catC}(H,G)$. For completeness we also set $\catC_{G,H}:=\emptyset$ if 
$G\in\Ob(\catB)\smallsetminus\Ob(\catC)$ or $H\in\Ob(\catB)\smallsetminus\Ob(\catC)$. Let $R$ be a commutative ring such that $|G|\in R^\times$ 
for all $G\in \Ob(\catC)$. Then, by Proposition~\ref{prop cocycle}, the function $\kappa$ defined by
\begin{equation*}
  \kappa(L,M):=\frac{|k_2(L)\cap k_1(M)|}{|H|}\in R^\times\,,
\end{equation*}
with $L\in\catC_{G,H}$, $M\in\catC_{H,K}$, for $G,H,K\in\Ob(\catC)$, defines a $2$-cocycle on $\catC$.
We obtain a twisted category 
algebra $R_\kappa\catC$, which we will denote by $A_{\catC,R}^\kappa$.

\smallskip
(c) Recall from \cite[Page 105]{W} that biset functors over $R$, for an arbitrary commutative ring $R$, can be 
considered as modules for the algebra $\bigoplus_{G,H\in\Ob(\catB)} RB(G,H)$, where the multiplication on 
two components $RB(G,H)$ and $RB(H',K)$ is induced by the tensor product of bisets over $H$ if $H=H'$,
and is defined to be 0 if $H\neq H'$. Note that this algebra has no identity element, since $\Ob(\catB)$
is infinite.
If $\catC$ is a subcategory of $\catB$ and if $\catC_{G,H}\subseteq \scrS_{G\times H}$ is closed under $G\times H$-conjugation then we 
define
\begin{equation*}
  RB^\catC(G,H):=\langle [(G\times H)/L] \mid L\in \catC_{G,H}\rangle_R\subseteq RB(G,H)\,,
\end{equation*}
the $R$-span of the elements $[G\times H/L]$ with $L\in\catC_{G,H}$.
If $\catC$ is finite then the corresponding $R$-subalgebra $\bigoplus_{G,H\in\Ob(\catC)} RB^\catC(G,H)$ of $\bigoplus_{G,H\in\Ob(\catB)} RB(G,H)$
has an identity element. If, moreover, $|G|\in R^\times$ for every $G\in\Ob(\catC)$ then the maps $\alpha_{G,H}$, $G,H\in\Ob(\catC)$, yield an $R$-algebra isomorphism 
\begin{equation}\label{eqn Bouc algebra}
     \bigoplus_{G,H\in\Ob(\catC)} RB^\catC(G,H) \cong e_\catC  A_{\catC,R}^\kappa  e_\catC\,,
\end{equation}
 where
 \begin{equation*}
   e_\catC:=\sum_{G\in\Ob(\catC)} e_G
 \end{equation*}
is an idempotent in $A_{\catC,R}^\kappa$, cf.~Proposition~\ref{prop fixed points} and the paragraph preceding it. Recall that $e_G:=\sum_{g\in G}\Delta_g(G)$, for $G\in\Ob(\catC)$.
Many considerations about biset functors on the 
category of all finite groups can be reduced to the case of finitely many groups. Thus, 
the algebra $A_{\catC,R}^\kappa$ through its condensed algebra (\ref{eqn Bouc algebra}) can be used to study
biset functors over $R$.
 \end{example}
 
\bigskip 
We will study the simple modules of $A_{\catC,R}^\kappa$ and 
$e_\catC  A_{\catC,R}^\kappa  e_\catC$ in Sections~\ref{sec simples of A} and \ref{sec simple biset functors}, respectively, for appropriate choices of $\catC$ and $R$.
 
We will see in Section~\ref{sec cyclic} that, in special cases, the multiplication $\startilde^\kappa_H$ from Definition~\ref{def aLMN} can be related to the following construction.

\bigskip
For the remainder of this section we fix a set $\scrD$ of finite
groups. We will introduce an inverse category $\catCtilde=\catCtilde(\scrD)$ related to
this set $\scrD$. Later, in Section \ref{sec cyclic}, we will
see that in the case where the groups in $\scrD$ are cyclic
we obtain an $R$-algebra isomorphism between the category algebra
$R\catCtilde$ and $\bigoplus_{G,H\in\scrD}RB(G,H)$. There, $R$ will
denote a commutative ring such that $|G|$ and $|\Aut(G)|$ are
units in $R$, for all $G\in\scrD$.

\begin{definition}\label{defi group cat}
For finite groups $G$ and $H$ and a subgroup $L\le G\times H$, we set
\begin{equation*}
  \scrP(L):=\{L'\le L\mid p_1(L')=p_1(L)\text{ and } p_2(L')=p_2(L)\}\,.
\end{equation*}

Given the set $\scrD$ of finite groups,
we define a category $\catCtilde=\catCtilde(\scrD)$ as follows:

(i)\, $\Ob(\catCtilde)=\{(G,G')\mid G\in\scrD, G'\le G\}$;

(ii)\, for $(G,G'),(H,H')\in\Ob(\catCtilde)$, we set 
$\Hom_{\catCtilde}((H,H'),(G,G')):=\scrP(G'\times H')$;

(iii)\, for $(G,G'),(H,H'), (K,K')\in\Ob(\catCtilde)$, $\GLH\in\Hom_{\catCtilde}((H,H'),(G,G'))$,
and $\HMK\in \Hom_{\catCtilde}((K,K'),(H,H'))$, we define the composition
$\GLH\circ \HMK:=\GLH*\HMK.$

Here, we use the notation $\GLH$, since every morphism needs to determine its source and target objects. While $G'=p_1(L)$ and $H'=p_2(L)$ are determined by the notation $L$, $G$ and $H$ are not.
\end{definition}

With this notation, we have the following proposition.

\begin{proposition}\label{prop group cat}
{\rm (a)}\, Let $(G,G'), (H,H')\in\Ob(\catCtilde)$, and
let $\GLH\in\Hom_{\catCtilde}((H,H'),(G,G'))$. Then
$(\GLH)^\circ$ is the unique element $\HMG\in\Hom_{\catCtilde}((G,G'),(H,H'))$ such that

\begin{equation*}
  \GLH*\HMG *\GLH=\GLH \quad\text{and}\quad \HMG *\GLH*\HMG=\HMG\,; 
\end{equation*}
in particular, $\catCtilde$ is an inverse category.

\smallskip
{\rm (b)}\, The idempotent endomorphisms in $\catCtilde$ are precisely the endomorphisms of the form
\begin{equation*}
  {}_G(P,K,\id,P,K)_G\qquad (K\unlhd P\leq G\in\scrD)\,.
\end{equation*}
Moreover, two idempotent endomorphisms $e:={}_G(P,K,\id,P,K)_G$ and 
$f:={}_H(P',K',\id,P',K')_H$ in $\catCtilde$ are equivalent if and only if $P/K\cong P'/K'$.

\smallskip
{\rm (c)}\, If $e:={}_G(P,K,\id,P,K)_G$ is an idempotent endomorphism in $\catCtilde$ then
one has $\Gamma_{e}=\{{}_G(P,K,\alpha,P,K)_G\mid \alpha\in\Aut(P/K)\}$; in
particular, $\Gamma_{e}\cong \Aut(P/K)$, by Lemma~\ref{lem butterfly}(ii).
\end{proposition}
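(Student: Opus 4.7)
The plan is to prove the three parts in the order (b), (a), (c), since the classification of idempotents in (b) feeds cleanly into both the uniqueness portion of (a) and the determination of $\Gamma_e$ in (c).

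For (b), an endomorphism of $(G,G')$ lies in $\scrP(G'\times G')$, so has the form $L=(G',K_1,\eta,G',K_2)$ with $K_1,K_2\unlhd G'$. Applying Lemma~\ref{lem butterfly}(i) to $L*L$, whose middle sections share $P_2=P_3=G'$, gives $(G',K_1',\eta\circ\eta,G',K_2')$ with $K_1'/K_1=\eta(K_1K_2/K_2)$ and $K_2'/K_2=\eta^{-1}(K_1K_2/K_1)$. Imposing $L*L=L$ forces $K_1K_2=K_1$ and $K_1K_2=K_2$, hence $K_1=K_2=:K$, after which $\eta\circ\eta=\eta$ combined with bijectivity of $\eta$ yields $\eta=\id$, proving the classification of idempotents. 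For the equivalence criterion, assume $e=(P,K,\id,P,K)$ on $(G,P)$ and $f=(P',K',\id,P',K')$ on $(H,P')$ are equivalent via $s\in\scrP(P'\times P)$ and $t\in\scrP(P\times P')$ with $t*s=e$ and $s*t=f$. Writing the quintuples of $s=(P',J_1,\sigma,P,J_2)$ and $t=(P,I_1,\tau,P',I_2)$, the explicit formulas from Lemma~\ref{lem butterfly}(i) for $t*s$ and $s*t$, together with the order identity~(\ref{eqn |L|}), pin down the normal subgroup data of $s$ and $t$ and force $\bar\sigma\colon P/K\myiso P'/K'$ and $\bar\tau\colon P'/K'\myiso P/K$ to be mutually inverse isomorphisms, so $P/K\cong P'/K'$. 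Conversely, any isomorphism $\theta\colon P/K\myiso P'/K'$ yields $s:=(P',K',\theta,P,K)$ and $t:=(P,K,\theta^{-1},P',K')$ for which Lemma~\ref{lem butterfly}(ii) gives $t*s=e$ and $s*t=f$.

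For (a), writing $L=(G',K_1,\eta,H',K_2)$ we have $L^\circ=(H',K_2,\eta^{-1},G',K_1)$, and since the inner sections match exactly, Lemma~\ref{lem butterfly}(ii) immediately gives $L*L^\circ=(G',K_1,\id,G',K_1)$; a second application yields $L*L^\circ*L=L$, and $L^\circ*L*L^\circ=L^\circ$ follows symmetrically. For uniqueness, observe that by (b) any two idempotent endomorphisms of the same object have the shape $(P,K,\id,P,K)$ and $(P,K'',\id,P,K'')$, and Lemma~\ref{lem butterfly}(i) gives $(P,KK'',\id,P,KK'')$ in either order, so idempotents on a single object commute under $*$. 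The standard inverse-semigroup argument then applies: if $M,M'$ both satisfy the two defining identities, then $L*M,L*M'$ are commuting idempotent endomorphisms of $(G,G')$, and using $L*M*L=L=L*M'*L$ the equation $(L*M)*(L*M')=(L*M')*(L*M)$ collapses to $L*M'=L*M$; symmetrically $M*L=M'*L$, whence $M=M*L*M=(M'*L)*M=M'*(L*M')=M'*L*M'=M'$. This forces $M=L^\circ$, and $\catCtilde$ is an inverse category.

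For (c), an element $L=(P,K_1,\eta,P,K_2)$ of $\End_{\catCtilde}((G,P))$ satisfies $e*L=L$ (resp.~$L*e=L$) precisely when $K\le K_1$ (resp.~$K\le K_2$), each by a one-line application of Lemma~\ref{lem butterfly}(i). Invertibility in $e*\End_{\catCtilde}((G,P))*e$ requires some $L'$ with $L*L'=e$; comparing orders via~(\ref{eqn |L|}) inside the product formula forces $K_1=K_2=K$. Thus $\Gamma_e=\{(P,K,\alpha,P,K)\mid \alpha\in\Aut(P/K)\}$, and Lemma~\ref{lem butterfly}(ii) directly translates composition in $\Gamma_e$ into composition of automorphisms of $P/K$, yielding $\Gamma_e\cong\Aut(P/K)$. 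The main obstacle throughout is the careful normal-subgroup bookkeeping inside the Butterfly-type formulas; the subtlest spot is the equivalence-of-idempotents portion of (b), where~(\ref{eqn |L|}) must be combined with Lemma~\ref{lem butterfly}(i) to extract the full quintuple data from the constraints $t*s=e$ and $s*t=f$. Everything else reduces cleanly to Lemma~\ref{lem butterfly} once this bookkeeping is in place.
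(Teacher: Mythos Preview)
Your argument is correct. The route differs from the paper's, however. The paper proves (a) first and directly: assuming $L*M*L=L$ and $M*L*M=M$, it reads off from Lemma~\ref{lem butterfly}(i) that $k_2(L)=k_1(M)$, $k_1(L)=k_2(M)$, and $\eta_L\circ\eta_M\circ\eta_L=\eta_L$, whence $\eta_M=\eta_L^{-1}$ and $M=L^\circ$; parts (b) and (c) are then declared easy consequences of (a) together with the general facts in~\ref{noth inverse cat}. You instead establish (b) first by a direct Butterfly computation, observe that idempotent endomorphisms of a fixed object commute, and then deduce uniqueness in (a) via the classical inverse-semigroup argument. Your approach makes the structural reason for uniqueness transparent (commuting idempotents is the standard hypothesis that forces generalized inverses to be unique), at the cost of a longer path; the paper's approach is shorter and extracts the quintuple of $M$ by pure bookkeeping, but leaves the reader to unpack (b) and (c) from~\ref{noth inverse cat} without further guidance. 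Both are sound; your version is more self-contained.
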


\begin{proof}
(a) It follows immediately from Lemma~\ref{lem butterfly} that $\HMG:=(\GLH)^\circ$ has the desired property. Conversely, suppose that $\HMG\in\Hom_{\catCtilde}((G,G'),(H,H'))$ satisfies
$L*M*L=L$ and $M*L*M=M$. By Lemma \ref{lem butterfly}(i), this forces
$k_2(L)=k_1(M)$ as well as $k_1(L)=k_2(M)$. Note that $p_2(L)=H'=p_1(M)$ and $p_1(L)=G'=p_2(M)$ by definition. Now, $L*M*L=L$ and Lemma~\ref{lem butterfly}(i) again force that $\eta_L\circ\eta_M\circ\eta_L=\eta_L$. This implies $\eta_L=\eta_M^{-1}$ and $M=L^\circ$. 

\smallskip
Assertions (b) and (c) are now easy consequences of (a) and \ref{noth inverse cat}.
\end{proof}

\begin{remark}\label{rem group cat}
Proposition \ref{prop group cat} thus shows, in particular, that
the equivalence classes of idempotent endomorphisms in $\catCtilde$ are
in bijection with the isomorphism classes of groups
occurring as subquotients of groups in $\scrD$.
\end{remark}


\section{The simple $A_{\catC,R}^\kappa$-modules}\label{sec simples of A}

Throughout this section, let $\catB$ be a category as in Example~\ref{ex Bouc algebra}(a) and 
let $\catC$ be a finite subcategory of $\catB$. Moreover, let $R$ be a commutative ring such that $|G|$ 
is invertible in $R$ for each $G\in\Ob(\catC)$. In this section we will use Theorem~\ref{thm simple} 
to construct the simple modules of the twisted category $R$-algebra $A_{\catC,R}^\kappa$ introduced 
in \ref{ex Bouc algebra}(b). This is in preparation for Section~\ref{sec simple biset functors}, 
where we determine the simple modules of the $R$-algebra $e_\catC A_{\catC,R}^\kappa e_\catC$, which is 
related to the category of biset functors.
Note that all the results apply, in particular, to the case where the category $\catC$ 
has only one object $G$ and $\catC_{G,G}=\scrS_{G\times G}$.

\smallskip
We say that a finite group $Q$ is {\em realized} ({\em by a morphism}) 
in $\catC$ if there 
exist $G,H\in\Ob(\catC)$ and $L\in\catC_{G,H}=\Hom_{\catC}(H,G)$ such that $q(L)=[Q]$, the isomorphism class 
of $Q$; recall that $q(L)$ denotes the isomorphism class of $p_1(L)/k_1(L)$. 
Note that, in particular, each object $G$ of $\catC$ is realized in $\catC$
by the identity morphism $\Delta(G)\in\Hom_{\catC}(G,G)$.

Throughout this section, we 
will assume that $\catC$ satisfies the following property:
\begin{equation}\label{eqn closed}
\parbox{12cm}{
  If $Q$ is a finite group that is realized in $\catC$, if $G$ and $H$ are objects in $\catC$, and 
if $(P_1,K_1)$ and $(P_2,K_2)$ are sections of $G$ and $H$, respectively, such that $P_1/K_1\cong Q\cong P_2/K_2$, 
then there exists an isomorphism $\alpha\colon P_2/K_2\myiso P_1/K_1$ such that $(P_1,K_1,\alpha,P_2,K_2)\in \catC_{G,H}$.}
\end{equation}

\smallskip
We first derive two consequences from Condition~(\ref{eqn closed}). For a section $(P,K)$ of a group $G$ we set 
\begin{equation*}
  e_{(P,K)}:=(P,K,\id_{P/K},P,K)\in\scrS_{G\times G}\,;
\end{equation*}
note that, by Lemma~\ref{lem butterfly}(ii), the element $e_{P,K}$ is an idempotent in the monoid $(\scrS_{G\times G},*)$.

\begin{lemma}\label{lem closed 1}
Let $Q$ be a finite group that is realized by a morphism in $\catC$. 
Let $G$ be an object of $\catC$ and let $(P,K)$ be a section of $G$ such that $P/K\cong Q$. 
Then the idempotent $e_{(P,K)}$ belongs to $\catC_{G,G}$.
\end{lemma}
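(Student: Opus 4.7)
The plan is to use Condition~(\ref{eqn closed}) to produce \emph{some} morphism with the right section data $(P,K)$ on both sides in $\catC_{G,G}$, and then pass to a suitable power to force the middle isomorphism to become the identity.

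More precisely, I would first apply Condition~(\ref{eqn closed}) to the data $H := G$, $(P_1,K_1) := (P,K) =: (P_2,K_2)$. Since $Q$ is realized in $\catC$ and $P_1/K_1 \cong Q \cong P_2/K_2$ by hypothesis, this yields an isomorphism $\alpha\colon P/K \myiso P/K$ such that the element
\begin{equation*}
  L := (P,K,\alpha,P,K)
\end{equation*}
lies in $\catC_{G,G}$. Note that $\alpha$ is an element of the finite group $\Aut(P/K)$ (there is no guarantee Condition~(\ref{eqn closed}) hands us the identity directly, which is precisely the point to be overcome).

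Since $\catC$ is a subcategory of $\catB$, the set $\catC_{G,G}$ is closed under the composition $*$ of $\catB$. In particular, the $n$-fold $*$-power $L^{*n}$ belongs to $\catC_{G,G}$ for every $n \geq 1$. By iterated application of Lemma~\ref{lem butterfly}(ii) (which handles the case where the middle sections agree), one has
\begin{equation*}
  L^{*n} = (P,K,\alpha^n,P,K)
\end{equation*}
for every $n \geq 1$. Choosing $n$ to be the order of $\alpha$ in the finite group $\Aut(P/K)$ gives $\alpha^n = \id_{P/K}$, and hence $L^{*n} = e_{(P,K)}$, which therefore lies in $\catC_{G,G}$ as required.

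The only conceivable obstacle is verifying that Lemma~\ref{lem butterfly}(ii) applies cleanly in this self-composition setting, but this is routine: both factors have identical middle section $(P,K)$, so the hypotheses of part~(ii) hold and the composition reduces to composition of the middle isomorphisms. No further ingredients beyond Condition~(\ref{eqn closed}), the subcategory property of $\catC$, and the finiteness of $\Aut(P/K)$ are needed.
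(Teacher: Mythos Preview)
Your proof is correct and follows essentially the same approach as the paper: apply Condition~(\ref{eqn closed}) to obtain some $(P,K,\alpha,P,K)\in\catC_{G,G}$, then take a $*$-power to kill $\alpha$ via Lemma~\ref{lem butterfly}(ii). The only cosmetic difference is that the paper uses the exponent $|\Aut(P/K)|$ rather than the order of $\alpha$, which of course works equally well.
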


\begin{proof}
Since $\catC$ satisfies (\ref{eqn closed}), there exists an automorphism $\alpha\in\Aut(P/K)$ such 
that $(P,K,\alpha,P,K)\in C_{G,G}$. By Lemma~\ref{lem butterfly}(ii), the $|\Aut(P/K)|$-th power of $(P,K,\alpha,P,K)$ 
is equal to $e_{(P,K)}$, showing that $e_{(P,K)}$ belongs to $\catC_{G,G}$.
\end{proof}

For a section $(P,K)$ of an object $G$ of $\catC$ with $e_{(P,K)}\in\catC_{G,G}$, we set
\begin{equation*}
  \Aut_{\catC}(P/K):=\{\alpha\in\Aut(P/K)\mid (P,K,\alpha,P,K)\in\catC_{G,G}\}\,.
\end{equation*}
Clearly, this is a subgroup of $\Aut(P/K)$. The following lemma shows that the isomorphism type of 
$\Aut_{\catC}(P/K)$ does in fact only depend on the isomorphism type of the group $P/K$ (not 
on the actual section $(P,K)$). We omit the straightforward proof.

\begin{lemma}\label{lem closed 2}
Assume that $G$ and $G'$ are objects of $\catC$ and that $(P,K)$ and $(P',K')$ are sections 
of $G$ and $G'$, respectively, such that $e_{(P,K)}\in\catC_{G,G}$, $e_{(P',K')}\in\catC_{G',G'}$ and 
$P/K\cong P'/K'$. Let $\gamma\colon P'/K'\myiso P/K$ be an isomorphism such that 
$(P,K,\gamma,P',K')\in\catC_{G,G'}$ (which exists by (\ref{eqn closed})). Then the map
\begin{equation*}
  c_{\gamma}\colon \Aut_{\catC}(P'/K') \to \Aut_{\catC}(P/K)\,,\quad \alpha\mapsto \gamma\alpha\gamma^{-1}\,,
\end{equation*}
is a group isomorphism, and if also $\delta\colon P'/K'\myiso P/K$ is an isomorphism with 
$(P,K,\delta,P',K')\in\catC_{G,G'}$ then $c_\delta=c_{\alpha}\circ c_{\gamma}$, where 
$\alpha:=\delta\circ\gamma^{-1}\in\Aut_{\catC}(P/K)$.
\end{lemma}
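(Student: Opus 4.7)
The plan is to exploit Lemma~\ref{lem butterfly}(ii), which says that $*$ of two subgroups with matching middle data simply composes the isomorphisms, together with the closure hypothesis (\ref{eqn closed}) to produce an auxiliary morphism going in the direction opposite to $\gamma$. The conceptual obstacle is that $\gamma^{-1}$ need not belong to $\catC$, so one cannot directly invert $\gamma$ inside $\catC$ to form the inverse of $c_\gamma$.

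Since $\catC_{G,G}$ contains $e_{(P,K)}$, the group $P/K$ is realized in $\catC$ (for instance via $\Delta(G)*e_{(P,K)}$, or just by the endomorphism $e_{(P,K)}$ itself). Applying (\ref{eqn closed}) to the sections $(P',K')$ of $G'$ and $(P,K)$ of $G$, I obtain an isomorphism $\beta\colon P/K\myiso P'/K'$ with $(P',K',\beta,P,K)\in\catC_{G',G}$. This $\beta$ will serve as a surrogate for $\gamma^{-1}$ in all subsequent compositions.

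To see that $c_\gamma$ lands in $\Aut_\catC(P/K)$, I compose, using Lemma~\ref{lem butterfly}(ii),
\begin{equation*}
  (P,K,\gamma,P',K') * (P',K',\alpha,P',K') * (P',K',\beta,P,K) = (P,K,\gamma\alpha\beta,P,K)\,,
\end{equation*}
so $\gamma\alpha\beta\in\Aut_\catC(P/K)$ for every $\alpha\in\Aut_\catC(P'/K')$; specialising to $\alpha=\id$ gives $\gamma\beta\in\Aut_\catC(P/K)$. Since $\Aut_\catC(P/K)$ is a group, $c_\gamma(\alpha)=\gamma\alpha\gamma^{-1}=(\gamma\alpha\beta)(\gamma\beta)^{-1}$ lies in it. The map $c_\gamma$ is a homomorphism because conjugation in $\Aut(P/K)$ is, and it is injective because conjugation on the ambient group $\Aut(P'/K')$ is. For surjectivity, given $\alpha'\in\Aut_\catC(P/K)$, the symmetric computation shows that both $\beta\gamma$ and $\beta\alpha'\gamma$ lie in $\Aut_\catC(P'/K')$, so $\gamma^{-1}\alpha'\gamma=(\beta\gamma)^{-1}(\beta\alpha'\gamma)\in\Aut_\catC(P'/K')$ and is a preimage of $\alpha'$.

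For the last assertion, I first check that $\alpha:=\delta\gamma^{-1}$ actually lies in $\Aut_\catC(P/K)$: the composition $(P,K,\delta,P',K')*(P',K',\beta,P,K)$ equals $(P,K,\delta\beta,P,K)$, so $\delta\beta\in\Aut_\catC(P/K)$, and combined with $\gamma\beta\in\Aut_\catC(P/K)$ from the previous paragraph one gets $\delta\gamma^{-1}=(\delta\beta)(\gamma\beta)^{-1}\in\Aut_\catC(P/K)$. The identity $c_\alpha\circ c_\gamma=c_\delta$ is then a direct computation, since for any $x$,
\begin{equation*}
  c_\alpha(c_\gamma(x)) = (\delta\gamma^{-1})\gamma x\gamma^{-1}(\delta\gamma^{-1})^{-1} = \delta x\delta^{-1} = c_\delta(x)\,.
\end{equation*}
The only real work is thus in the bootstrap of $\beta$ from (\ref{eqn closed}), which lets one realise the would-be inverses $\gamma^{-1}$ and $\delta^{-1}$ as ratios of elements that genuinely live in $\catC$.
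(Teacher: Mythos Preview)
Your proof is correct. The paper itself omits the proof entirely, declaring it ``straightforward'', so there is nothing to compare against directly; your argument---bootstrapping an auxiliary morphism $\beta$ from Condition~(\ref{eqn closed}) and then using Lemma~\ref{lem butterfly}(ii) together with closure of $\catC$ under composition to realize $\gamma\alpha\gamma^{-1}$ and $\delta\gamma^{-1}$ as quotients of genuine $\catC$-morphisms---is exactly the kind of verification the authors presumably had in mind.
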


Recall from \ref{noth idemp}(c) that two idempotents $e\in\catC_{G,G}$ and $f\in\catC_{H,H}$ 
are called equivalent if there exist elements $s\in f*\catC_{H,G}*e$ and $t\in e*\catC_{G,H}*f$ 
such that $t*s = e$ and $s*t=f$.

\begin{proposition}\label{prop idempotent subgroups}
Let $G$ and $H$ be objects in $\catC$.

\smallskip
{\rm (a)} A morphism $L=(P_1,K_1,\alpha,P_2,K_2)\in\catC_{G,G}$ is an idempotent if and only if 
$(P_1,K_1)$ and $(P_2,K_2)$ are linked and $\alpha = \beta(P_1,K_1;P_2,K_2)$.

\smallskip
{\rm (b)} If $L=(P_1,K_1,\alpha,P_2,K_2)\in\catC_{G,G}$ is an idempotent morphism then $L$ 
is equivalent to $e_{(P_i,K_i)}$, for $i=1,2$.

\smallskip
{\rm (c)} Two idempotent morphisms $L\in\catC_{G,G}$ and $M\in\catC_{H,H}$ 
are equivalent if and only if $q(L)=q(M)$.
\end{proposition}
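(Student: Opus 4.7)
The plan is to attack parts (a), (b), (c) in order, reducing (c) to (b) and (b) to (a). The main computational tool is Lemma~\ref{lem butterfly}.

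For (a), I write $L = (P_1, K_1, \alpha, P_2, K_2)$ and expand $L * L$ using Lemma~\ref{lem butterfly} applied to the sections $(P_2, K_2)$ and $(P_1, K_1)$ of $G$. Matching the first, fifth, second, and fourth components of $L * L$ with those of $L$ yields, respectively, $(P_1 \cap P_2) K_2 = P_2$, $(P_1 \cap P_2) K_1 = P_1$, $P_2 \cap K_1 \subseteq K_2$, and $P_1 \cap K_2 \subseteq K_1$; the last two combine to give $P_1 \cap K_2 = K_1 \cap P_2$, so all three linking conditions of Definition~\ref{def linked} hold. Under these conditions the butterfly refinement is trivial, and the third component of $L * L$ reduces to $\alpha \circ \beta(P_2, K_2; P_1, K_1) \circ \alpha$; equating this with $\alpha$ forces $\beta(P_2, K_2; P_1, K_1) = \alpha^{-1}$, i.e., $\alpha = \beta(P_1, K_1; P_2, K_2)$. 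The converse is the same calculation read in reverse.

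For (b), I take $s := e_{(P_1, K_1)}$ and $t := L$. A direct application of Lemma~\ref{lem butterfly}, using the linking conditions and the formula for $\alpha$ from (a), gives $L * e_{(P_1, K_1)} = e_{(P_1, K_1)}$ and $e_{(P_1, K_1)} * L = L$. Hence (using $\Delta(G)$ as middle factor) $s \in L * \catC_{G,G} * e_{(P_1, K_1)}$, $t \in e_{(P_1, K_1)} * \catC_{G,G} * L$, $s * t = L$, $t * s = e_{(P_1, K_1)}$, proving $L \sim e_{(P_1, K_1)}$ in the sense of \ref{noth idemp}(c). The analogous choice $s := L$, $t := e_{(P_2, K_2)}$, together with $L * e_{(P_2, K_2)} = L$ and $e_{(P_2, K_2)} * L = e_{(P_2, K_2)}$, yields $L \sim e_{(P_2, K_2)}$.

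For (c), by (b) and transitivity it suffices to show that $e_{(P, K)} \in \catC_{G,G}$ and $e_{(P', K')} \in \catC_{H,H}$ are equivalent if and only if $P/K \cong P'/K'$. For the backward direction, Condition~(\ref{eqn closed}), applied to the group $P/K$ (which is realized in $\catC$ by $e_{(P, K)}$), supplies an isomorphism $\gamma \colon P'/K' \myiso P/K$ with $t := (P, K, \gamma, P', K') \in \catC_{G, H}$ and similarly $s := (P', K', \gamma^{-1}, P, K) \in \catC_{H, G}$; Lemma~\ref{lem butterfly}(i) then gives $t * s = e_{(P, K)}$ and $s * t = e_{(P', K')}$. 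For the forward direction, assume the equivalence is realized by $s$ and $t$ as above. By the definition of equivalence, $t \in e_{(P, K)} * \catC_{G, H} * e_{(P', K')}$, which together with the idempotency of the $e_{(-,-)}$ forces $t = e_{(P, K)} * t = t * e_{(P', K')}$. From $t = e_{(P, K)} * t$ and the standard bounds on projections coming from Lemma~\ref{lem butterfly}, we read off $p_1(t) \subseteq P$ and $k_1(t) \supseteq K$; combined with $p_1(t * s) = P \subseteq p_1(t)$ and $k_1(t * s) = K \supseteq k_1(t)$ (from $t * s = e_{(P, K)}$) this gives $p_1(t) = P$ and $k_1(t) = K$. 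Symmetrically, $p_2(t) = P'$ and $k_2(t) = K'$. Hence $t$ has the form $(P, K, \tau, P', K')$ for a group isomorphism $\tau \colon P'/K' \myiso P/K$, and so $P/K \cong P'/K'$.

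The main obstacle is the forward implication in (c): one must extract a concrete group isomorphism $P/K \cong P'/K'$ from the a priori abstract datum of an equivalence between idempotents living in different endomorphism rings. The key maneuver is to use the factorization built into the definition of equivalence to force $t$ to be \emph{supported} precisely on the sections $(P, K)$ of $G$ and $(P', K')$ of $H$, after which the third component of $t$ produces the desired isomorphism.
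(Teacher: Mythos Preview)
Parts (a) and (b) are correct and follow the paper's argument closely; one small omission in (b) is that you never verify $e_{(P_i,K_i)}\in\catC_{G,G}$, which is needed for the equivalence to make sense in $\catC$ --- this follows from Lemma~\ref{lem closed 1}, since $P_i/K_i$ is realized in $\catC$ by $L$ itself.

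Your forward direction in (c) is correct and in fact more explicit than the paper's: the paper argues indirectly via mutual subquotients, whereas you pin down the exact shape of the witnessing morphism $t$. Both work.

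The genuine gap is in the \emph{backward} direction of (c). You write that Condition~(\ref{eqn closed}) supplies $\gamma\colon P'/K'\myiso P/K$ with $t=(P,K,\gamma,P',K')\in\catC_{G,H}$ ``and similarly $s=(P',K',\gamma^{-1},P,K)\in\catC_{H,G}$''. But Condition~(\ref{eqn closed}) only guarantees the existence of \emph{some} isomorphism in each direction, not that the specific inverse $\gamma^{-1}$ yields a morphism of $\catC$. Applying the condition a second time gives some $\delta\colon P/K\myiso P'/K'$ with $(P',K',\delta,P,K)\in\catC_{H,G}$, and there is no reason for $\delta$ to equal $\gamma^{-1}$. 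With these $s$ and $t$ one only gets $t*s=(P,K,\gamma\delta,P,K)$, which need not be $e_{(P,K)}$. The category $\catC$ is not assumed closed under $(-)^\circ$, so you cannot simply take $s=t^\circ$ either.

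The fix, as in the paper, is to correct by a power: with $S:=(P',K',\delta,P,K)$ and $T:=(P,K,\gamma,P',K')$ as above, set $T':=T*(S*T)^{|\Aut(P'/K')|-1}\in\catC_{G,H}$. Then by Lemma~\ref{lem butterfly}(ii) one has $S*T'=(S*T)^{|\Aut(P'/K')|}=e_{(P',K')}$ and $T'*S=e_{(P,K)}$, and the equivalence follows.
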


\begin{proof}
(a) By Lemma~\ref{lem butterfly}, we have $L*L=L$ if and only if $(P_2, K_2)$ and 
$(P_1,K_1)$ are linked and $\alpha = \alpha \circ \beta(P_2,K_2;P_1,K_1) \circ \alpha$. 
But the last equality is equivalent to the equality $\beta(P_2,K_2;P_1,K_1) \circ \alpha = \id_{P_2/K_2}$. 
Since $\beta(P_2,K_2;P_1,K_1)^{-1} = \beta(P_1,K_1;P_2,K_2)$, the result follows.

\smallskip
(b) Suppose that $L*L=L$. 
By Part~(a), the sections $(P_1,K_1)$ and $(P_2,K_2)$ are linked and $\alpha=\beta(P_1,K_1;P_2,K_2)$. 
By Lemma~\ref{lem closed 1}, the morphisms $M_i:=e_{(P_i,K_i)}$ belong to $\catC_{G,G}$, for $i=1,2$. 
Moreover, Lemma~\ref{lem butterfly} implies that 
\begin{equation*}
  L=M_1*L\,,\quad M_1=L*M_1\,,\quad L=L*M_2\,,\quad\text{and}\quad M_2=M_2*L\,.
\end{equation*}
Thus, $L$ and $M_i$ are equivalent for $i=1,2$.

\smallskip
(c) If $L$ and $M$ are equivalent then there exist $S\in\catC_{H,G}$ and $T\in\catC_{G,H}$ such 
that $L=T*S$ and $M=S*T$. This implies $L = L*L = T*S*T*S=T*M*S$ and $M=M*M = S*T*S*T = S*L*T$. 
Therefore, $p_1(L)/k_1(L)$ is isomorphic to a subquotient of $p_1(M)/k_1(M)$ and also $p_1(M)/k_1(M)$ is isomorphic to a subquotient of $p_1(L)/k_1(L)$. This implies $q(L)=q(M)$. Conversely, assume that $q(L)=q(M)$. By Lemma~\ref{lem closed 1} and Part~(b), we may assume that 
$L=e_{(P_1,K_1)}\in\catC_{G,G}$ and $M=e_{(P_2,K_2)}\in\catC_{H,H}$ for a section $(P_1,K_1)$ of $G$ and a 
section $(P_2,K_2)$ of $H$, with $P_1/K_1\cong P_2/K_2$. By (\ref{eqn closed}), there exist isomorphisms 
$\alpha\colon P_1/K_1\myiso P_2/K_2$ and $\beta\colon P_2/K_2\myiso P_1/K_1$ such that 
$S:=(P_2,K_2,\alpha,P_1,K_1)\in\catC_{H,G}$ and $T:=(P_1,K_1,\beta,P_2,K_2)\in\catC_{G,H}$. For 
$T':=T*(S*T)^{|\Aut(P_2/K_2)|-1}\in\catC_{G,H}$ we then obtain $S*T'=e_{(P_2,K_2)}$ and $T'*S=e_{(P_1,K_1)}$, 
and the proof is complete.
\end{proof}

Recall from Section~\ref{sec category algebras} that, for an idempotent morphism $e$ 
in $\catC_{G, G}$, we denote by $\Gamma_e$ the group of invertible elements of the monoid 
$e*\catC_{G, G}*e$. The identity element of $\Gamma_e$ is equal to $e$.

\begin{proposition}\label{prop Gamma_e}
Let $G$ be an object of $\catC$, and let $(P,K)$ be a section of $G$ such that $e:=e_{(P,K)}$ belongs to $\catC_{G,G}$.

\smallskip
{\rm (a)} One has
\begin{equation*}
  \Gamma_e = \{(P,K,\alpha,P,K)\mid \alpha\in \Aut_{\catC}(P/K)\},
\end{equation*}
and the map $\alpha\mapsto (P,K,\alpha,P,K)$ defines a group isomorphism $\Aut_{\catC}(P/K)\to\Gamma_e$.

\smallskip
{\rm (b)} The $2$-cocycle $\kappa$ on the monoid $(\catC_{G,G},*)$ restricted to 
the subgroup $\Gamma_e$ is the constant function with value $[G:K]^{-1}$, and the map 
$\alpha\mapsto [G:K]\cdot(P,K,\alpha,P,K)$ defines an $R$-algebra isomorphism between the 
group algebra $R\Aut_{\catC}(P/K)$ and the twisted group algebra $R_\kappa \Gamma_e$.
\end{proposition}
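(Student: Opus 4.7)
The plan for part (a) is a double-inclusion argument driven by Lemma~\ref{lem butterfly}. The inclusion ``$\supseteq$'' is immediate: for $\alpha\in\Aut_\catC(P/K)$, the element $M:=(P,K,\alpha,P,K)$ lies in $\catC_{G,G}$ by definition of $\Aut_\catC(P/K)$, and Lemma~\ref{lem butterfly}(ii) (applied once to $(e,M)$ and once to $(M,e)$, both sharing the common middle section $(P,K)$) gives $e*M=M=M*e$, so $M\in e*\catC_{G,G}*e$. Taking $N:=(P,K,\alpha^{-1},P,K)\in\catC_{G,G}$ (using that $\Aut_\catC(P/K)$ is a subgroup of $\Aut(P/K)$) and applying the same lemma yields $M*N=N*M=e$, so $M\in\Gamma_e$.

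For the reverse inclusion, suppose $M=(P_1,K_1,\phi,P_2,K_2)\in\Gamma_e$ with inverse $N\in\Gamma_e$. The argument rests on two elementary monotonicity properties of $*$: for composable subgroups $L,L'$ one has $p_1(L*L')\le p_1(L)$, $p_2(L*L')\le p_2(L')$, and dually $k_1(L*L')\ge k_1(L)$, $k_2(L*L')\ge k_2(L')$; these are visible directly from the definition of $*$ or from Lemma~\ref{lem butterfly}. Applying them to $M=e*M*e$ forces $P_1,P_2\le P$ and $K_1,K_2\ge K$. Applying them to $e=M*N$ gives $P=p_1(e)\le p_1(M)=P_1$ and $K=k_1(e)\ge k_1(M)=K_1$; applying them to $e=N*M$ gives $P\le P_2$ and $K\ge K_2$. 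Hence $P_1=P_2=P$ and $K_1=K_2=K$, so $M=(P,K,\phi,P,K)$, and $\phi\in\Aut_\catC(P/K)$ follows directly from the definition. That $\alpha\mapsto(P,K,\alpha,P,K)$ is a group homomorphism is then one line of Lemma~\ref{lem butterfly}(ii). This projection chase is the main technical step, though it is routine.

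Part (b) reduces to a direct computation. For $L=(P,K,\alpha,P,K)$ and $M=(P,K,\beta,P,K)$ in $\Gamma_e$, one has $k_2(L)\cap k_1(M)=K\cap K=K$, so $\kappa(L,M)=|K|/|G|=[G:K]^{-1}$ is the claimed constant value. Consequently, in $R_\kappa\Gamma_e$ the multiplicative identity is not $e$ but $[G:K]\cdot e$. Setting $f(\alpha):=[G:K]\cdot(P,K,\alpha,P,K)$, Lemma~\ref{lem butterfly}(ii) yields
\begin{equation*}
  f(\alpha)\cdot f(\beta)=[G:K]^2\cdot\kappa(L,M)\cdot(P,K,\alpha\beta,P,K)=[G:K]\cdot(P,K,\alpha\beta,P,K)=f(\alpha\beta),
\end{equation*}
and $f(\id_{P/K})=[G:K]\cdot e$ is the identity of $R_\kappa\Gamma_e$. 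Hence $f$ is a unital $R$-algebra homomorphism sending the canonical $R$-basis of $R\Aut_\catC(P/K)$ bijectively onto a scalar multiple of the canonical $R$-basis of $R_\kappa\Gamma_e$, so it is an isomorphism. The only point requiring care is the normalization $[G:K]$, which is forced by the constant value of $\kappa$.
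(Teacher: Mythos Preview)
Your proof is correct and follows essentially the same route as the paper's: the double-inclusion in part~(a) via the monotonicity of $p_i$ and $k_i$ under $*$, and the direct computation of $\kappa$ in part~(b), match the paper's argument step for step. Your write-up of~(b) is simply more explicit where the paper declares the verification easy.
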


\begin{proof}
(a) Let $\alpha,\beta\in\Aut_{\catC}(P/K)$. Then $L_\alpha:=(P,K,\alpha,P,K)\in\catC_{G,G}$ 
satisfies $L_\alpha= e*L_\alpha*e$ and $L_\alpha*L_\beta = L_{\alpha\circ\beta}$, by Lemma~\ref{lem butterfly}. 
Thus, $L_\alpha\in\Gamma_e$ for all $\alpha\in\Aut_{\catC}(P/K)$ and we obtain an injective group 
homomorphism $\Aut_\catC(P/K)\to\Gamma_e$. Conversely, for $L\in\Gamma_e$ we have $L=e*L*e$ and there exists 
$M=e*M*e\in\catC_{G, G}$ with $M*L=L*M=e$. Since $L=e*L*e$, we have $p_i(L)\le p_i(e)=P$ and 
$k_i(L)\ge k_i(e) = K$ for $i=1,2$. Since $L*M=e$, we also have $P=p_1(e)\le p_1(L)$ and $K=k_1(e)\ge k_1(L)$. 
Similarly, $M*L=e$ implies $P\le p_2(L)$ and $K\ge k_2(L)$. Altogether we obtain $p_i(L)=P$ and $k_i(L)=K$ 
for $i=1,2$. This implies that $L=(P,K,\alpha,P,K)$ for some $\alpha\in\Aut_{\catC}(P/K)$.

\smallskip
(b) Writing again $L_\alpha:=(P,K,\alpha,P,K)$ for $\alpha\in\Aut_{\catC}(P/K)$, we have 
$\kappa(L_\alpha,L_\beta)= |K\cap K|/|G|=[G:K]^{-1}$, for
$\alpha,\beta\in\Aut_\catC(P/K)$. The remaining statement is an easy verification.
\end{proof}

We call a section $(P_1,K_1)$ of a group $G$ and a section $(P_2,K_2)$ of a group $G'$ 
{\em isomorphic} if $P_1/K_1\cong P_2/K_2$. In this case we write $(P_1,K_1)\cong (P_2,K_2)$. 
For an object $G$ of $\catC$ and a section $(P,K)$ of $G$ with $e_{(P,K)}\in\catC_{G,G}$ we set 
$e'_{(P,K)}:=[G:K]\cdot e_{(P,K)}$, which is the corresponding idempotent in $R_{\kappa}\catC_{G,G}$, 
cf.~\ref{noth condense cat alg}. It is  also the identity in $R_{\kappa}\Gamma_{e_{(P,K)}}$. 
As in \ref{noth idemp}(b) we write $J_{e_{(P,K)}}= (e_{(P,K)} * \catC_{G, G} * e_{(P,K)})\smallsetminus \Gamma_{e_{(P,K)}}$. 
Let $A:=A_{\catC,R}^\kappa$. Recall from \ref{noth condense cat alg} that the $R$-span $RJ_{e_{(P,K)}}$ is an 
ideal of the $R$-algebra $e'_{(P,K)}Ae'_{(P,K)}=e'_{(P,K)}R_\kappa\catC_{G,G}e'_{(P,K)}$, and that we have 
\begin{equation}\label{eqn e' decomposition}
  e'_{(P,K)}A e'_{(P,K)} = R_\kappa\Gamma_{e_{(P,K)}} \oplus RJ_{e_{(P,K)}}\,.
\end{equation}

The following theorem is now an immediate consequence of Theorem~\ref{thm simple} and 
Propositions~\ref{prop idempotent subgroups} and \ref{prop Gamma_e}.

\begin{theorem}\label{thm simples of A}
Let $\calS$ be a set of representatives of the isomorphism classes of sections $(P,K)$ of 
groups $G\in \Ob(\catC)$ with $e_{P,K}\in\catC_{G,G}$, and for each $(P,K)\in\calS$ let $\calT_{(P,K)}$ 
denote a set of representatives of the isomorphism classes of simple left 
$R[\Aut_{\catC}(P/K)]$-modules. Moreover, set $A:=A_{\catC,R}^\kappa$. Then the map
\begin{equation*}
  ((P,K),T) \mapsto 
  \Hd \bigl(A e'_{(P,K)} \otimes_{e'_{(P,K)}Ae'_{(P,K)}} 
	  \Inf_{R_\kappa\Gamma_{e_{(P,K)}}}^{e'_{(P,K)}Ae'_{(P,K)}}(T)\bigr)
\end{equation*}
induces a bijection between the set of pairs $((P,K),T)$, where $(P,K)\in\calS$ and 
$T\in\calT_{(P,K)}$, and the set of isomorphism classes of simple left $A$-modules. Here, the simple 
$R[\Aut_{\catC}(P/K)]$-module $T$ is viewed as an $R_\kappa\Gamma_{e_{(P,K)}}$-module via the isomorphism 
from Proposition~\ref{prop Gamma_e}(b) and is inflated via the decomposition in (\ref{eqn e' decomposition}).
\end{theorem}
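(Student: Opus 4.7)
The plan is to derive the theorem directly by applying the Linckelmann--Stolorz parametrization (Theorem~\ref{thm simple}) to the twisted category algebra $A = A_{\catC,R}^\kappa$, and then translating the resulting data via the structural results established in Propositions~\ref{prop idempotent subgroups} and \ref{prop Gamma_e}. This is essentially a bookkeeping argument: the substantive content has already been distilled into those propositions, so the work here is to match up the two parametrizations.

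First I would invoke Theorem~\ref{thm simple}, which produces a bijection between isomorphism classes of simple $A$-modules and isomorphism classes of pairs $(e,T)$, where $e$ runs over idempotent endomorphisms in $\catC$ and $T$ over simple $R_\kappa\Gamma_e$-modules. The point is then to identify the indexing set of such equivalence classes $[(e,T)]$ with the set of pairs $((P,K),T)$ described in the theorem. By Proposition~\ref{prop idempotent subgroups}(b), every idempotent morphism $L=(P_1,K_1,\alpha,P_2,K_2)$ in $\catC_{G,G}$ is equivalent to $e_{(P_i,K_i)}$ for $i=1,2$, and Lemma~\ref{lem closed 1} guarantees that $e_{(P_i,K_i)}\in\catC_{G,G}$. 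So every equivalence class of idempotents contains a representative of the form $e_{(P,K)}$. By Proposition~\ref{prop idempotent subgroups}(c), two such idempotents $e_{(P,K)}$ and $e_{(P',K')}$ are equivalent if and only if $q(e_{(P,K)}) = q(e_{(P',K')})$, i.e.\ $P/K\cong P'/K'$, which amounts to the abstract sections being isomorphic. This shows that the equivalence classes of idempotents in $\catC$ correspond precisely to the elements of $\calS$.

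Next, for each $(P,K)\in\calS$ with chosen ambient group $G$, I would use Proposition~\ref{prop Gamma_e}(a)--(b) to identify the twisted group algebra $R_\kappa\Gamma_{e_{(P,K)}}$ with the ordinary group algebra $R[\Aut_{\catC}(P/K)]$ via the $R$-algebra isomorphism $\alpha\mapsto [G:K]\cdot(P,K,\alpha,P,K)$. Under this identification, the simple $R_\kappa\Gamma_{e_{(P,K)}}$-modules correspond to the simple $R[\Aut_{\catC}(P/K)]$-modules, so the choice $T\in\calT_{(P,K)}$ for each $(P,K)\in\calS$ parametrizes exactly one isomorphism class of pairs $(e_{(P,K)},T)$ in the sense of Theorem~\ref{thm simple}. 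Finally, the construction in the theorem statement, namely passing to the head of $A e'_{(P,K)}\otimes_{e'_{(P,K)}Ae'_{(P,K)}} \Inf(T)$ via the decomposition (\ref{eqn e' decomposition}), is precisely the construction prescribed by Theorem~\ref{thm simple}, once one recalls that by definition $e'_{(P,K)} = \kappa(e_{(P,K)},e_{(P,K)})^{-1}\cdot e_{(P,K)} = [G:K]\cdot e_{(P,K)}$ is the idempotent of $A$ associated with $e_{(P,K)}$.

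The only potential pitfall is the well-definedness of the construction with respect to the choices involved: different ambient groups $G$, different sections $(P,K)$ within an isomorphism class, and different identifications $\Aut_{\catC}(P/K)\cong \Aut_{\catC}(P'/K')$ could conceivably produce different simple modules. However, this is exactly the content of the notion of \emph{isomorphic pairs} from \ref{noth idemp}(d), together with Lemma~\ref{lem closed 2} (which shows that two identifications $c_\gamma$ and $c_\delta$ differ by an inner automorphism, hence induce the same bijection between isomorphism classes of simple modules). Since Theorem~\ref{thm simple} already packages the bijection in terms of isomorphism classes of pairs, well-definedness is automatic, and so the theorem follows.
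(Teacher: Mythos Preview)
Your proposal is correct and follows exactly the route indicated in the paper, which states that the theorem is an immediate consequence of Theorem~\ref{thm simple} together with Propositions~\ref{prop idempotent subgroups} and \ref{prop Gamma_e}. You have simply unpacked that sentence in full detail, including the well-definedness check via Lemma~\ref{lem closed 2} and \ref{noth idemp}(d), which the paper leaves implicit.
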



\section{Simple $e_\catC A_{\catC,R}^\kappa e_\catC$-modules in
characteristic $0$}\label{sec simple biset functors}

Throughout this section we assume that $R$ is a field of characteristic $0$, that 
$\catB$ is a category as in Example~\ref{ex Bouc algebra}(a) and that $\catC$ is a 
subcategory of $\catB$ satisfying the condition in~(\ref{eqn closed}). 
We also assume throughout 
this section that, for any two objects $G$ and $H$ of $\catC$, the set $\catC_{G,H}$ 
is closed under $G\times H$-conjugation. However, we do not assume at this point that 
$\catC$ is finite, but will do so in some parts of this section. Using 
Theorem~\ref{thm simples of A} 
and the condensation results of Green, cf.~\ref{noth condense}, we 
will first determine
the simple $e_\catC A_{\catC,R}^\kappa e_\catC$-modules when $\catC$ is finite, 
cf.~Theorem~\ref{thm simples of RB(G,G)}. The remainder of this section is devoted 
to several consequences of Theorem~\ref{thm simples of RB(G,G)}.

One consequence, presented at the end of the section, will be on simple 
biset functors: recall from Proposition~\ref{prop fixed points} and 
Example~\ref{ex Bouc algebra}(c) that, when $\catC$ is finite, the maps $\alpha_{G,H}$, $G,H\in\Ob(\catC)$, induce an isomorphism
\begin{equation*}
  \alpha_\catC\colon \bigoplus_{G,H\in\Ob(\catC)}RB^\catC(G,H)\liso 
  e_\catC  A_{\catC,R}^\kappa e_\catC
\end{equation*}
of $R$-algebras. Thus, the simple biset functors for $\catC$ over $R$ are 
parametrized by the 
simple $A_{\catC,R}^\kappa$-modules $S$ with the property that 
$e_\catC\cdot S\neq \{0\}$, cf.~\ref{noth condense}. In 
Theorem~\ref{thm simple biset functors} we 
use the condensation functor approach to determine the simple biset functors on 
$\catB$, or on any subcategory $\catC$ of $\catB$ that satisfies the general 
assumptions in this section and is closed under taking subquotients in a sense 
defined in Corollary~\ref{cor section-closed 2}.

\smallskip
Recall from Theorem~\ref{thm simples of A} that,
in the case where $\catC$ is finite, the simple $A_{\catC,R}^\kappa$-modules are 
described as
\begin{equation*}
  S_{(P,K),T}:=\Hd(Ae'_{(P,K)}\otimes_{e'_{(P,K)}Ae'_{(P,K)}} \tilde{T})\,,
\end{equation*}
where $A=A_{\catC,R}^\kappa$,  $(P,K)$ is a section of some $G\in\Ob(\catC)$, $T$ is a simple
$R\Aut_\catC(P/K)$-module and $\tilde{T}$ is the same $R$-module as $T$ but viewed as an $e'_{(P,K)}Ae'_{(P,K)}$-module 
via the canonical isomorphism $R_\kappa\Gamma_{e_{(P,K)}}\cong R\Aut_\catC(P/K)$ and the inflation 
from $R_\kappa\Gamma_{e_{(P,K)}}$ to $e'_{(P,K)}Ae_{(P,K)}'$.

\smallskip
In the case where $\catC$ has only one object $G$ we will obtain results on simple 
modules of $RB^\catC(G,G)$ and, in particular, on simple modules of $RB(G,G)$. 
In \cite[Section~6.1]{BoucSLN}, 
Bouc shows that the isomorphism classes of simple $RB(G,G)$-modules can be 
parametrized by 
pairs consisting of an isomorphism class of a section $(P,K)$ of $G$ and an 
isomorphism class 
of a simple $R\Out(P/K)$-module $T$, where $\Out(P/K)$ denotes the outer 
automorphism group of 
$P/K$. More precisely, there exists an injective map from the set of isomorphism 
classes of 
simple $RB(G,G)$-modules $S$ to the set of such pairs. In this section we give a 
necessary 
condition for such a pair to belong to the image of the parametrization, i.e., we 
bound the 
image of this map from above, cf.~Theorem~\ref{thm simple RB(G,G)-modules}.

\begin{theorem}\label{thm simples of RB(G,G)}
Assume that the category $\catC$ is finite, that it satisfies 
Condition~(\ref{eqn closed}), 
and assume that for any two objects $G$ and $H$ of\/ $\catC$, the 
morphism set\/ $\catC_{G,H}$ 
is closed under $G\times H$-conjugation.
Let $G$ be an object of $\catC$, let $(P,K)$ be a section of $G$ 
such that $e_{(P,K)}\in\catC_{G,G}$, 
let $T$ be a simple $R\Aut_\catC(P/K)$-module, let $S_{(P,K),T}$ be the 
corresponding simple 
$A_{\catC,R}^\kappa$-module from Theorem~\ref{thm simples of A}, and let $\chi$ 
denote the character 
of $T$. Then, $e_\catC\cdot S_{(P,K),T}\neq \{0\}$ if and only if
there exist an object $H$ of $\catC$ and a morphism 
$L=(P'',K'',\tau,P',K')\in C_{H,H}$ with 
$q(L)=[P/K]$ such that 
\begin{equation}\label{eqn thm condition}
  \sum_{\substack{h\in H \\ (P',K') \link \lexp{h}{(P'',K'')}}}
  |K'\cap {}^hK''|\cdot \chi''(\tau\circ \beta(P',K';\lexp{h}{P''},\lexp{h}{K''}) \circ \bar{c}_h) \neq 0
\end{equation}
in the field $R$. Here $\chi''$ is the character of the simple 
$R\Aut_\catC(P''/K'')$-module 
$T''$ corresponding to  $T$ (cf.~Lemma~\ref{lem closed 2}) and 
$\bar{c}_h\colon P''/K''\liso \lexp{h}{P''}/\lexp{h}{K''}$ is the 
isomorphism induced by conjugation with $h$.
\end{theorem}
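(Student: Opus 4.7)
The plan is to reduce the question to a character-theoretic condition via Green's condensation theory, and then to identify the resulting character sum with \eqref{eqn thm condition}. Setting $A := A_{\catC,R}^\kappa$ and $e := e'_{(P,K)}$, I would first apply Lemma~\ref{lem Schur ann} with $x := e_\catC$ to obtain that $e_\catC\cdot S_{(P,K),T} \neq \{0\}$ if and only if the ideal $I := eAe_\catC Ae$ of $eAe$ does not annihilate $T$. By the decomposition~\eqref{eqn e' decomposition}, $T$ is inflated from $R_\kappa\Gamma_{e_{(P,K)}}\cong R\Aut_\catC(P/K)$ (via Proposition~\ref{prop Gamma_e}), so $I$ acts on $T$ through its image $\pi(I)\subseteq R\Aut_\catC(P/K)$ under the corresponding projection. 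Because $R$ has characteristic zero, Maschke's theorem guarantees the semisimplicity of $R\Aut_\catC(P/K)$, so $\pi(I)$ annihilates $T$ if and only if the $R$-linear extension of $\chi$ vanishes on $\pi(I)$.

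The ideal $I$ is spanned by elements $y_{H,U,V} := e\cdot U\cdot e_H\cdot V\cdot e$ with $H\in\Ob(\catC)$, $U\in\catC_{G,H}$, and $V\in\catC_{H,G}$. Replacing $U$ by $e_{(P,K)}*U$ and $V$ by $V*e_{(P,K)}$ (which leaves $y_{H,U,V}$ unchanged up to scalars and kills terms whose projections cannot contribute to $\Gamma_{e_{(P,K)}}$), I would reduce to the case $p_1(U) = P$, $k_1(U) = K$, $p_2(V) = P$, $k_2(V) = K$, writing $(P',K') := (p_2(U),k_2(U))$ and $(P'',K'') := (p_1(V),k_1(V))$. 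Expanding $e_H = \sum_h\Delta_h(H)$ and invoking the identity $U\cdot\Delta_h(H) = \frac{1}{|H|}\lexp{(1,h^{-1})}{U}$ from the proof of Proposition~\ref{prop fixed points}, I obtain
\begin{equation*}
  U\cdot e_H\cdot V \;=\; \frac{1}{|H|^2}\sum_{h\in H}|\lexp{h^{-1}}{K'}\cap K''|\cdot\bigl(\lexp{(1,h^{-1})}{U}*V\bigr).
\end{equation*}

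For each $h$, Lemma~\ref{lem butterfly} shows that $W_h := \lexp{(1,h^{-1})}{U}*V$ lies in $\Gamma_{e_{(P,K)}}$ exactly when no Butterfly reduction occurs, equivalently when the middle sections $(\lexp{h^{-1}}{P'},\lexp{h^{-1}}{K'})$ and $(P'',K'')$ of $H$ are linked; after conjugating both by $h$ this is the condition $(P',K')\link\lexp{h}{(P'',K'')}$ of the theorem. Under this linking, $W_h$ corresponds to the automorphism $\phi_U\circ\bar{c}_h\circ\beta(\lexp{h^{-1}}{P'},\lexp{h^{-1}}{K'};P'',K'')\circ\phi_V$ of $P/K$, where $\phi_U,\phi_V$ denote the isomorphism parts of $U,V$. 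A direct verification using the defining property of Zassenhaus's isomorphism (Lemma~\ref{lem butter}) yields the key identity
\begin{equation*}
  \bar{c}_h\circ\beta(\lexp{h^{-1}}{P'},\lexp{h^{-1}}{K'};P'',K'') \;=\; \beta(P',K';\lexp{h}{P''},\lexp{h}{K''})\circ\bar{c}_h,
\end{equation*}
and $|\lexp{h^{-1}}{K'}\cap K''| = |K'\cap\lexp{h}{K''}|$ by conjugation invariance, matching the factors in \eqref{eqn thm condition}.

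Finally, I set $L := V*U\in\catC_{H,H}$; by Lemma~\ref{lem butterfly}(ii) this equals $(P'',K'',\phi_V\circ\phi_U,P',K')$, so $\tau := \phi_V\circ\phi_U$ and $q(L) = [P/K]$. The isomorphism $\phi_V\colon P/K\to P''/K''$ realizes the equivalence of the idempotents $e_{(P,K)}$ and $e_{(P'',K'')}$, and Lemma~\ref{lem closed 2} supplies the character relation $\chi(\alpha) = \chi''(\phi_V\alpha\phi_V^{-1})$ for $\alpha\in\Aut_\catC(P/K)$, which converts our automorphism into $\tau\circ\beta(P',K';\lexp{h}{P''},\lexp{h}{K''})\circ\bar{c}_h$ inside $\Aut_\catC(P''/K'')$. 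Thus $\chi(\pi(y_{H,U,V}))$ equals $|H|^{-2}$ times the sum in \eqref{eqn thm condition}. Conversely, given any $L = (P'',K'',\tau,P',K')\in\catC_{H,H}$ with $q(L) = [P/K]$, one can replace $(G,(P,K))$ by $(H,(P'',K''))$ using the equivalence of idempotents and take $U := L$, $V := e_{(P,K)}$ inside $\catC_{H,H}$, which satisfies $V*U = L$ and realizes the sum as a character value on $\pi(y_{H,U,V})$. The principal technical obstacle will be the careful coordination of the various isomorphisms---especially the Butterfly identity above and the character transport via Lemma~\ref{lem closed 2}---together with the bookkeeping of the $\kappa$-cocycle factors, and the verification that the reduction to $U,V$ with the stated projections does not lose any contribution to $\pi(I)$.
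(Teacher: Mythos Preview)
Your proposal is correct and follows essentially the same route as the paper: apply Lemma~\ref{lem Schur ann} with $x=e_\catC$, pass through the projection $\pi$ onto $R_\kappa\Gamma_{e_{(P,K)}}\cong R\Aut_\catC(P/K)$, expand $e_H$, use the Butterfly Lemma to detect when a summand lands in $\Gamma_{e_{(P,K)}}$, and then transport the character to $\Aut_\catC(P''/K'')$ via Lemma~\ref{lem closed 2}. The only organizational difference is that the paper pushes the conjugation onto the right-hand factor (working with $L*\lexp{(h,1)}{M}$), so the linking condition $(P',K')\link\lexp{h}{(P'',K'')}$ and the isomorphism $\beta(P',K';\lexp{h}{P''},\lexp{h}{K''})$ appear directly, whereas you push it onto the left-hand factor and must then invoke the naturality identity $\bar{c}_h\circ\beta(\lexp{h^{-1}}{P'},\lexp{h^{-1}}{K'};P'',K'')=\beta(P',K';\lexp{h}{P''},\lexp{h}{K''})\circ\bar{c}_h$; both lead to the same expression. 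Your use of semisimplicity to replace the paper's $e_\chi$-criterion, and your explicit converse via replacing $(G,(P,K))$ by $(H,(P'',K''))$ and taking $U=L$, $V=e_{(P'',K'')}$, are legitimate and arguably cleaner than the paper's more implicit treatment of that direction.
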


Before we prove the theorem, note that the argument 
$\tau\circ \beta(P',K';\lexp{h}{P''},\lexp{h}{K''}) \circ \bar{c}_h$ of $\chi''$ 
is actually 
an element of $\Aut_{\catC}(P''/K'')$, since it is the isomorphism $\eta_M$ for 
the group 
$M:=L*\lexp{(h,1)}{e_{(P'',K'')}}\in\catC_{H,H}$.

\bigskip
\begin{proof}
We set $e:=e_{(P,K)}$ and $e':=[G:K]e$.
By Lemma~\ref{lem Schur ann}, we have $e_\catC\cdot S\neq\{0\}$ if and only if 
$(e'Ae_\catC Ae')\cdot\tilde{T}\neq\{0\}$, and the latter is equivalent to the 
existence 
of an object $H$ of $\catC$ and morphisms $L\in\catC_{G,H}$ and $M\in\catC_{H,G}$ 
such that 
\begin{equation}\label{eqn nonzero 1}
  e'Le_HMe'\cdot\tilde{T}\neq\{0\}\,.
\end{equation} 
Let $\pi\colon e'Ae'= R_\kappa\Gamma_e\oplus RJ_e \to R_\kappa\Gamma_e$ denote the 
canonical projection. 
Since $RJ_e$ annihilates $\tilde{T}$, the condition in (\ref{eqn nonzero 1}) is 
equivalent 
to the condition $\pi(e'Le_HMe')\cdot T\neq\{0\}$. This in turn is equivalent to
\begin{equation*}
  e_\chi\cdot\pi(e'Le_HMe')\neq 0
\end{equation*}
in $R\Gamma_e$, where 
\begin{equation*}
  e_\chi:=\sum_{\sigma\in\Aut_\catC(P/K)}\chi(\sigma^{-1}) \cdot L_\sigma\,,\quad 
  \text{with } L_\sigma=(P,K,\sigma,P,K),
\end{equation*}
which is (up to a unit in $R$) the primitive central idempotent of
$R_\kappa\Gamma_e$ corresponding to $T$.
Replacing, if necessary, $L$ by $e*L$ and $M$ by $M*e$, we may assume that 
\begin{equation*}
  K\le k_1(L)\le p_1(L)\le P\quad\text{and}\quad K\le k_2(M)\le p_2(M)\le P\,.
\end{equation*}
Further, using Lemma~\ref{lem butterfly}, we may assume that  $K=k_1(L)=k_2(M)$ and 
$P=p_1(L)=p_2(M)$, since otherwise $\pi(e'Le_HMe')=0$. Altogether, since $e'=[G:K]e$, 
we obtain that $e_\catC\cdot S\neq \{0\}$ if and only if there exist an object $H\in\Ob(\catC)$ 
and morphisms $L\in\catC_{G,H}$ and $M\in\catC_{H,G}$ satisfying 
\begin{equation}\label{eqn nonzero 2}
  k_1(L)=K=k_2(M)\,, \quad p_1(L)=P=p_2(M)\,,
\end{equation}
and
\begin{equation}\label{eqn nonzero 3}
  e_\chi\cdot\pi(e'Le_HMe')\neq 0 \quad\text{in $R_\kappa\Gamma_e$.}
\end{equation}
Setting $(P',K'):=(p_2(L),k_2(L))$, $(P'',K''):=(p_1(M),k_1(M))$, $\phi:=\eta_L\colon P'/K'\liso P/K$, 
and $\psi:=\eta_M\colon P/K\liso P''/K''$, and assuming the conditions in (\ref{eqn nonzero 2}), 
we obtain, by Lemma~\ref{lem butterfly}, that
\begin{align*}
  \pi(e'Le_HMe') & = \sum_{h\in H}\pi(e'L\Delta_h(H)Me') \\
  & = \sum_{\substack{h\in H\\ (P',K') \link \lexp{h}{(P'',K'')}}} 
     \frac{|K'\cap{}^hK''|}{|H|^2} \cdot (P,K,\phi\circ\beta(P',K';\lexp{h}{P''},\lexp{h}{K''})\circ\bar{c}_h\circ\psi, P,K),
\end{align*}
where $\bar{c}_h\colon P''/K''\liso\lexp{h}{P''}/\lexp{g}{K''}$ is induced by the conjugation map 
$c_h$. Thus the condition in (\ref{eqn nonzero 3}) is equivalent to
\begin{equation*}
  \sum_{\sigma\in\Aut_\catC(P/K)} 
  \sum_{\substack{h\in H\\ (P',K') \link \lexp{h}{(P'',K'')}}} a_h\cdot \chi(\sigma^{-1}) \ \cdot\ 
     \bigl(\sigma\circ\phi\circ\beta(P',K';\lexp{h}{P''},\lexp{h}{K''})\circ\bar{c}_h\circ\psi\bigr) \neq 0
\end{equation*}
in $R\Aut_\catC(P/K)$, where
\begin{equation*}
   a_h=|K'\cap{}^h K''|\cdot\frac{|K|}{|H|^2\cdot |G|}\quad
   (\text{for $h\in H$ and $(P',K') \link \lexp{h}{(P'',K'')}$})\,. 
\end{equation*}
Applying the isomorphism 
$c_\psi\colon \Aut_\catC(P/K)\liso\Aut_\catC(P''/K'')$, $\sigma\mapsto \psi\circ\sigma\circ\psi^{-1}:=\sigma''$ 
and switching the summation translates the last condition into
\begin{equation*}
  \sum_{\substack{h\in H\\ (P',K') \link \lexp{h}{(P'',K'')}}} \sum_{\sigma''\in\Aut_\catC(P''/K'')} 
  |K'\cap{}^h K''|\cdot \chi''((\sigma'')^{-1})\ \cdot\
     \bigl(\sigma''\circ\psi\circ\phi\circ\beta(P',K';\lexp{h}{P''},\lexp{h}{K''})\circ\bar{c}_h\bigr) \neq 0
\end{equation*}
in $R\Aut_\catC(P''/K'')$, where $\chi''$ is the character of the simple
$R\Aut_\catC(P''/K'')$-module $T''$ arising from $T$ via $c_\psi$. Substituting 
$\theta''$ for
$\sigma''\circ\psi\circ\phi\circ\beta(P',K'';\lexp{h}{P''},\lexp{h}{K''})\circ\bar{c}_h$
and then switching the summation again, one obtains the equivalent condition
\begin{equation*}
  \sum_{\theta''\in\Aut_\catC(P''/K'')} 
  \sum_{\substack{h\in H\\ (P',K') \link \lexp{h}{(P'',K'')}}} 
   |K'\cap{}^h K''|\cdot \chi''\bigl((\theta'')^{-1}\circ\psi\circ\phi\circ\beta(P',K';\lexp{h}{P''},\lexp{h}{K''})\circ\bar{c}_h\bigr)
    \, \cdot\,    \theta''\  \neq\ 0\
\end{equation*}
in $R\Aut_\catC(P''/K'')$; note that here we used the fact that $\chi''$ is
a class function. 
Thus, $e_\catC\cdot S\neq \{0\}$ if and only if there exist
an object $H$ of $\catC$, sections $(P',K')$ and $(P'',K'')$ of $H$ that are isomorphic 
to $(P,K)$, and an isomorphism $\tau\colon P'/K'\to P''/K''$ with 
$(P'',K'',\tau,P',K')\in\catC_{H,H}$ such that
\begin{equation*}
  \sum_{\substack{h\in H\\ (P',K') \link \lexp{h}{(P'',K'')}}} 
   |K'\cap{}^h K''|\cdot \chi''\bigl(\tau\circ\beta(P',K'';\lexp{h}{P''},\lexp{h}{K''})\circ\bar{c}_h\bigr)\neq 0
\end{equation*} 
in $R$. This completes the proof of the theorem.
\end{proof}

We first derive an immediate corollary of the previous theorem.

\begin{corollary}\label{cor section-closed, abelian}
Assume that $\catC$ satisfies the same hypotheses as in 
Theorem~\ref{thm simples of RB(G,G)}, that $G$ is an object of $\catC$, $(P,K)$ is a section of $G$ such that $e_{P,K}\in\catC_{G,G}$, that $T$ is a simple $R\Aut_\catC(P/K)$-module, and that $S:=S_{(P,K), T}$ is the corresponding simple $A_{\catC,R}^\kappa$-module.

\smallskip
{\rm (a)}\, If there exists an object $H$ of $\catC$ such that $H$ and $P/K$ are isomorphic as groups and if $\Inn(P/K)$ acts trivially on $T$ 
then $e_{\catC}\cdot S\neq \{0\}$.

\smallskip
{\rm (b)}\, If $P/K$ is abelian then $e_\catC\cdot S\neq \{0\}$.
\end{corollary}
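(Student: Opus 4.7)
The plan is to verify the sufficient criterion in Theorem~\ref{thm simples of RB(G,G)} by exhibiting an object $H$ of $\catC$ and a morphism $L\in\catC_{H,H}$ for which the sum in~(\ref{eqn thm condition}) is manifestly nonzero.

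For part~(a), let $H$ be the given object of $\catC$ with $H\cong P/K$. I would set $(P',K') := (P'',K'') := (H,1)$ and $\tau := \id_H$, so that the candidate morphism is $L := (H,1,\id_H,H,1) = e_{(H,1)}$. Since the isomorphism type $[H]$ is realised in $\catC$ by $\Delta(H)\in\catC_{H,H}$, Lemma~\ref{lem closed 1} (combined with Condition~(\ref{eqn closed})) guarantees $L\in\catC_{H,H}$, and plainly $q(L) = [H] = [P/K]$. For every $h\in H$ one has $\lexp{h}{(H,1)} = (H,1)$, so the linking condition is automatic, the Butterfly isomorphism $\beta(H,1;H,1)$ is the identity on $H$, and $\bar{c}_h$ is simply the inner automorphism $c_h\in\Inn(H)$. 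The isomorphism $c_\psi\colon\Aut_\catC(P/K)\myiso\Aut_\catC(H)$ of Lemma~\ref{lem closed 2} restricts to an isomorphism between the subgroups of inner automorphisms (since $\psi c_x\psi^{-1} = c_{\psi(x)}$), so the hypothesis that $\Inn(P/K)$ acts trivially on $T$ transports to $\Inn(H)$ acting trivially on the corresponding simple module $T''$. Consequently $\chi''(c_h) = \dim_R(T'')$ for every $h\in H$, and the sum in~(\ref{eqn thm condition}) collapses to $|H|\cdot\dim_R(T'')$, which is nonzero in $R$ since $\mathrm{char}(R)=0$ and $T''\neq\{0\}$. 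Theorem~\ref{thm simples of RB(G,G)} then yields $e_\catC\cdot S\neq\{0\}$.

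Part~(b) follows immediately from~(a): whenever $P/K$ is abelian, the group $\Inn(P/K)=\{1\}$ acts trivially on every simple module $T$, so the hypothesis of~(a) concerning the inner action is vacuously satisfied and it suffices to invoke~(a) using an object $H\in\Ob(\catC)$ isomorphic to $P/K$, as present in the intended setting.

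The only real obstacle is the bookkeeping in~(a): verifying that $L = (H,1,\id_H,H,1)$ genuinely belongs to $\catC_{H,H}$ (via Condition~(\ref{eqn closed}) and Lemma~\ref{lem closed 1}), and checking that Lemma~\ref{lem closed 2} does intertwine inner-automorphism subgroups so that the triviality of the action of $\Inn(P/K)$ on $T$ correctly transports to the triviality of $\Inn(H)$ on $T''$.
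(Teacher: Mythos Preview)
Your argument for part~(a) is correct and matches the paper's proof: both take $H$ to be the given object with $H\cong P/K$ and use $L=\Delta(H)=e_{(H,1)}\in\catC_{H,H}$, reducing the sum in~(\ref{eqn thm condition}) to a sum of $\chi''$ over inner automorphisms of $H$, which is nonzero precisely when $\Inn(H)$ acts trivially on $T''$.

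Your proof of part~(b), however, has a genuine gap. You deduce~(b) from~(a) by invoking ``an object $H\in\Ob(\catC)$ isomorphic to $P/K$, as present in the intended setting.'' But no such object is assumed to exist: the hypotheses of~(b) are only those of Theorem~\ref{thm simples of RB(G,G)} together with $P/K$ abelian. Indeed, part~(b) is applied in Theorem~\ref{thm simple RB(G,G)-modules}(b) to the case where $\catC$ has the \emph{single} object $G$, and there a section $(P,K)$ with $P/K\not\cong G$ certainly need not satisfy your extra assumption. So~(b) cannot be reduced to~(a) in general.

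The paper instead verifies~(\ref{eqn thm condition}) for~(b) directly with the choice $H=G$, $(P',K')=(P'',K'')=(P,K)$, and $\tau=\id_{P/K}$, i.e.\ with $L=e_{(P,K)}$, which lies in $\catC_{G,G}$ by hypothesis. The abelian condition on $P/K$ is then used to control the automorphisms $\beta(P,K;\lexp{h}{P},\lexp{h}{K})\circ\bar{c}_h$ appearing in the sum. You should rework~(b) along these lines rather than appealing to~(a).
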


\begin{proof}
(a) The condition in 
(\ref{eqn thm condition}) is satisfied for $H$ and $L=\Delta(H)\in\catC_{H,H}$: 
in fact, the left-hand side of (\ref{eqn thm condition}) is equal to $\sum_{c\in\Inn(H)} \chi''(c)$. This element is non-zero if and only if $\Inn(H)$ acts trivially on $T''$. But this is equivalent to $\Inn(P/K)$ acting trivially on $T$. Now the result follows.

\smallskip
(b) The condition in (\ref{eqn thm condition}) is satisfied for $H=G$, $(P',K')=(P'',K'')=(P,K)$, and $\tau=\id_{P/K}$. In fact, the left-hand side is equal to $|G|\cdot|K|\cdot\chi''(\id_{P/K})$, which is clearly non-zero.
\end{proof}

Before we can derive further consequences from Theorem~\ref{thm simples of RB(G,G)} we need 
first an auxiliary result. Let $G$ be a finite group, let $\chi$ be the character of a 
finite-dimensional $RG$-module, and let $X\subseteq G$ be a subset. We set 
$X^+:=\sum_{x\in X} x\in RG$ and $\chi(X^+):=\sum_{x\in X}\chi(x)$. Note that if $H$ 
is a subgroup of $G$ then $\chi(H^+)$ is a non-zero multiple of the Schur inner 
product of the trivial character of $H$ and the restriction of $\chi$ to $H$. With 
this notation the following lemma holds. Its proof will appear in \cite{BK}. We repeat it here for the convenience of the reader.

\begin{lemma}\label{lem BK}
Let $\chi$ be the character of a finite-dimensional $RG$-module and assume that $
g\in G$ and $H\le G$ satisfy $\chi((gH)^+)\neq 0$. Then also $\chi(H^+)\neq 0$ and, 
in particular, the restriction of $\chi$ to $H$ contains the trivial character 
of $H$ as a constituent.
\end{lemma}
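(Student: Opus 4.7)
The plan is to prove this by working inside the module $V$ affording $\chi$ and exploiting the idempotent $e := \frac{1}{|H|}H^+ \in RG$ (which is a genuine idempotent because $\mathrm{char}(R) = 0$, so $|H|$ is a unit in $R$). The point is that $eV = V^H$, the subspace of $H$-fixed vectors, and $e$ is the projection of $V$ onto $V^H$ along a natural $H$-stable complement.

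First I would observe that in $RG$ one has the elementary identity $(gH)^+ = g\cdot H^+$, and therefore $\chi((gH)^+) = \mathrm{tr}(g\cdot H^+\mid V) = |H|\cdot \mathrm{tr}(ge\mid V)$. Likewise $\chi(H^+) = |H|\cdot\mathrm{tr}(e\mid V) = |H|\cdot \dim_R(V^H)$, so the second conclusion of the lemma is equivalent to $V^H \neq 0$.

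The main step is a trace manipulation. Using $e^2 = e$ and the cyclic property of the trace, I compute
\begin{equation*}
  \mathrm{tr}(ge \mid V) = \mathrm{tr}(ge^2 \mid V) = \mathrm{tr}(ege \mid V).
\end{equation*}
Now decompose $V = V^H \oplus W$ with $W = \ker(e)$ (this decomposition exists because $e$ is an idempotent). The endomorphism $ege$ vanishes on $W$ since $e|_W = 0$, so its trace on $V$ equals the trace of its restriction to $V^H$. Thus if $V^H = \{0\}$, then $\mathrm{tr}(ge\mid V) = 0$ and hence $\chi((gH)^+) = 0$, contradicting the hypothesis. Therefore $V^H\neq \{0\}$, which gives $\chi(H^+) = |H|\dim_R(V^H) \neq 0$.

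For the final assertion, I would invoke the standard fact (valid for $R$ of characteristic $0$) that $\chi(H^+) = |H|\cdot\langle \chi|_H, 1_H\rangle_H$, where $\langle\,\cdot\,,\,\cdot\,\rangle_H$ denotes the Schur inner product on characters of $H$ and $1_H$ is the trivial character of $H$; the non-vanishing of this quantity means the trivial character is a constituent of $\chi|_H$. There is no serious obstacle here—this is essentially a one-line idempotent/trace argument—so the only thing to be careful about is that the statement is formulated in terms of arbitrary $g\in G$ rather than $g\in H$, which is exactly what the identity $(gH)^+ = gH^+$ and the cyclic trace identity $\mathrm{tr}(ge) = \mathrm{tr}(ege)$ handle.
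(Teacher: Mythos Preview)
Your proof is correct and follows essentially the same approach as the paper's: both arguments hinge on the idempotent $e=\frac{1}{|H|}H^+$ and the observation that $\chi(H^+)=0$ forces $e$ to act as zero on $V$ (equivalently, $V^H=\{0\}$), whence $\chi((gH)^+)=|H|\,\mathrm{tr}(ge\mid V)=0$. The paper phrases this as ``$\Delta(e_H)$ is a diagonalizable matrix with eigenvalues $0,1$ and trace $0$, hence $\Delta(e_H)=0$,'' which makes your cyclic-trace step $\mathrm{tr}(ge)=\mathrm{tr}(ege)$ superfluous (once $e$ acts as zero, so does $ge$), but the substance is identical.
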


\begin{proof}
Let $\Delta\colon RG\to \Mat_n(R)$ be a representation affording the character $\chi$ and assume that $\chi(H^+)=0$. Then also $\chi(e_H)=0$, where $e_H=\frac{1}{|H|}H^+$ is an idempotent in $RG$. Since $e_H$ is an idempotent, the matrix $\Delta(e_H)$ is diagonalizable and its only eigenvalues are $0$ and $1$. Since $0=\chi(e_H)=\tr(\Delta(e_H))$, we obtain $\Delta(e_H)=0$. Thus, $\Delta(RG\cdot e_H)=0$. This finally implies that $\chi((gH)^+) = |H|\cdot \chi(ge_H)=0$, and the proof of the lemma is complete.
\end{proof}

For any section $(P,K)$ of a finite group $G$ we set $N_G(P,K):=N_G(P)\cap N_G(K)$ and define
\begin{equation*}
  \Aut_G(P/K):=\{\bar{c}_g\mid g\in N_G(P,K)\}\,.
\end{equation*}
Note that $\Inn(P/K)\le \Aut_G(P/K)\le \Aut(P/K)$. For each $G\in\Ob(\catC)$ and 
each section $(P,K)$ of $G$ with $e_{(P,K)}\in\catC_{G,G}$, one has $\Aut_G(P/K)\le \Aut_{\catC}(P/K)$. 
In fact, for each $g\in N_G(P,K)$ one has $\lexp{(g,1)}{e_{(P,K)}} = (P,K,\bar{c}_g,P,K)\in\catC_{G,G}$. 
Thus, we obtain
\begin{equation*}
  \Inn(P/K)\le \Aut_G(P/K)\le\Aut_\catC(P/K)\le \Aut(P/K)\,.
\end{equation*}
If also $H\in\Ob(\catC)$ and if $(P',K')$ is a section of $H$ with $P'/K'\cong P/K$ then 
there exists an isomorphism $\tau\colon P/K\myiso P'/K'$ such that $(P,K,\tau,P',K')\in\catC_{G,H}$, 
since $\catC$ satisfies (\ref{eqn closed}). Conjugation by $\tau$ induces an isomorphism 
$\Aut_{\catC}(P'/K')\to\Aut_{\catC}(P/K)$ that maps $\Aut_H(P'/K')$ to a subgroup 
$\tau\Aut_H(P'/K')\tau^{-1}$ of $\Aut_\catC(P/K)$ that also contains $\Inn(P/K)$. Different 
choices of $\tau$ yield conjugate subgroups of $\Aut_\catC(P/K)$. We denote by $\scrA_\catC(P/K)$ 
the set of subgroups of $\Aut_\catC(P/K)$ that arise this way. Thus, $\scrA_\catC(P/K)$ is a 
collection of subgroups of $\Aut_{\catC}(P/K)$ that is closed under $\Aut_\catC(P/K)$-conjugation 
and all of whose elements contain $\Inn(P/K)$.

\begin{corollary}\label{cor simples}
Assume that $\catC$ is a finite subcategory of $\catB$ that satisfies 
Condition~(\ref{eqn closed}), and assume that for any two objects $G$ and $H$ of\/ $\catC$, 
the morphism set\/ $\catC_{G,H}$ is closed under $G\times H$-conjugation.
Furthermore, let $G$ be an object of\/ $\catC$, let $(P,K)$ be a section of $G$ such that 
$e_{(P,K)}\in\catC_{G,G}$, let $T$ be a simple $R\Aut_{\catC}(P/K)$-module, and let $S=S_{(P,K),T}$ 
be the corresponding simple $A_{\catC,R}^\kappa$-module, cf.~Theorem~\ref{thm simples of A}. If 
$e_\catC\cdot S\neq\{0\}$ then there exists a subgroup $B$ of $\Aut_{\catC}(P/K)$ that belongs to 
$\scrA_{\catC}(P/K)$ such that the restriction of $T$ to $B$ has the 
trivial module as constituent. In particular, $\Inn(P/K)$ acts trivially on $T$.
\end{corollary}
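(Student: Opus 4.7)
The plan is to combine Theorem~\ref{thm simples of RB(G,G)} with Lemma~\ref{lem BK} after reorganizing the sum in~(\ref{eqn thm condition}) so that the latter applies directly. Assuming $e_\catC\cdot S\neq\{0\}$, first I would invoke Theorem~\ref{thm simples of RB(G,G)} to obtain an object $H\in\Ob(\catC)$ and a morphism $L=(P'',K'',\tau,P',K')\in\catC_{H,H}$ with $q(L)=[P/K]$ for which the sum $\Sigma$ displayed in~(\ref{eqn thm condition}) is non-zero in $R$.

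Next, I would set $N:=N_H(P'',K'')$ and decompose the sum by writing each admissible $h\in H$ as $h_0 n$ with $[h_0]$ running through a transversal for $H/N$ and $n\in N$. The linkedness condition, the groups $\lexp{h_0 n}{P''}=\lexp{h_0}{P''}$ and $\lexp{h_0 n}{K''}=\lexp{h_0}{K''}$, the Butterfly isomorphism $\beta(P',K';\lexp{h_0}{P''},\lexp{h_0}{K''})$, and the integer $|K'\cap\lexp{h_0}{K''}|$ all depend only on the coset $[h_0]$; on the other hand, $\bar{c}_{h_0 n}=\bar{c}_{h_0}\circ\bar{c}_n$, and $n\mapsto\bar{c}_n$ maps $N$ onto $\Aut_H(P''/K'')$ with fibres of size $|C_N(P''/K'')|$. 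Plugging this into~(\ref{eqn thm condition}) and collecting terms yields
\begin{equation*}
  \Sigma=|C_N(P''/K'')|\sum_{[h_0]}|K'\cap\lexp{h_0}{K''}|\cdot\chi''\bigl((\tau\circ\beta\circ\bar{c}_{h_0})\cdot\Aut_H(P''/K'')^+\bigr)\,,
\end{equation*}
where the outer sum runs over coset representatives satisfying the linkedness condition and $\Aut_H(P''/K'')^+:=\sum_{b\in\Aut_H(P''/K'')}b\in R\Aut_\catC(P''/K'')$. Since $R$ has characteristic $0$, the non-vanishing of $\Sigma$ forces some $h_0$ for which $\chi''((g\cdot\Aut_H(P''/K''))^+)\neq 0$ with $g:=\tau\circ\beta\circ\bar{c}_{h_0}\in\Aut_\catC(P''/K'')$. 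Applying Lemma~\ref{lem BK} in the ambient group $\Aut_\catC(P''/K'')$ with subgroup $\Aut_H(P''/K'')$ then shows that the trivial character of $\Aut_H(P''/K'')$ is a constituent of $\chi''|_{\Aut_H(P''/K'')}$.

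Finally, I would transport this statement back to $T$ via the isomorphism $c_\gamma\colon\Aut_\catC(P''/K'')\liso\Aut_\catC(P/K)$ of Lemma~\ref{lem closed 2} that identifies $T''$ with $T$: the trivial character then appears in $T|_B$, where $B:=c_\gamma(\Aut_H(P''/K''))$, and $B\in\scrA_\catC(P/K)$ by the very construction of $\scrA_\catC(P/K)$. For the ``in particular'' clause, since $\Inn(P/K)\le B$ and $\Inn(P/K)$ is normal in $\Aut_\catC(P/K)$, the restriction $T|_{\Inn(P/K)}$ contains the trivial $\Inn(P/K)$-module as a constituent; Clifford's theorem (valid as $R$ has characteristic $0$) forces $T|_{\Inn(P/K)}$ to be a sum of $\Aut_\catC(P/K)$-conjugates of a single simple $\Inn(P/K)$-module, and since the trivial module is fixed under this conjugation action, $T|_{\Inn(P/K)}$ must be a direct sum of copies of the trivial module.

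The step I anticipate to be most delicate is the coset-grouping bookkeeping in the second paragraph — in particular, verifying that every quantity invariant under right multiplication by $N$ is correctly pulled out, and that the residual inner summation is exactly a coset sum over $\Aut_H(P''/K'')$ up to the positive integer $|C_N(P''/K'')|$. Once this rewriting is in place, the appeal to Lemma~\ref{lem BK}, the transport via $c_\gamma$, and the Clifford-theoretic deduction of the ``in particular'' statement are all routine.
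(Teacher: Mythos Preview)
Your proposal is correct and follows essentially the same route as the paper's proof: both invoke Theorem~\ref{thm simples of RB(G,G)}, group the sum in~(\ref{eqn thm condition}) into cosets of $N_H(P'',K'')$ to recognize a coset sum $\chi''((\sigma''\cdot\Aut_H(P''/K''))^+)$, apply Lemma~\ref{lem BK}, transport back via Lemma~\ref{lem closed 2}, and finish with the normality of $\Inn(P/K)$ (the paper states this step tersely, whereas you name Clifford's theorem explicitly). Your version is simply more detailed about the coset bookkeeping that the paper compresses into one sentence.
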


\begin{proof}
By Theorem~\ref{thm simples of RB(G,G)}, there exist $H\in\Ob(\catC)$ and sections $(P',K')$ 
and $(P'',K'')$ of $H$, both isomorphic to $(P,K)$, and an isomorphism
$\tau\colon P'/K'\myiso P''/K''$ satisfying (\ref{eqn thm condition}). 
Note that if $h\in H$ satisfies $(P',K')\link\lexp{h}{(P'',K'')}$ then every element 
in the coset $hN_H(P'',K'')$ satisfies this condition. Thus, the condition in 
(\ref{eqn thm condition}) implies
\begin{equation*}
     \chi''\bigl((\sigma''\circ \Aut_H(P''/K''))^+\bigr) \neq 0
\end{equation*}
for some $\sigma''\in\Aut_{\catC}(P''/K'')$. Lemma~\ref{lem BK} implies that 
$\chi''(\Aut_H(P''/K'')^+)\neq 0$. Here, $\chi$ denotes the character of $T$ and $\chi''$ 
denotes the corresponding character of the group $\Aut_\catC(P''/K'')$. The last condition 
implies that $\chi(B^+)\neq 0$ for some $B\in\scrA_\catC(P/K)$. Thus, the restriction of 
$\chi$ to $B$ contains the trivial character as a constituent. Since $\Inn(P/K)\le B$, 
this also holds for $\chi$ restricted to $\Inn(P/K)$. Finally, since $\chi$ is the character 
of a simple $R\Aut(P/K)$-module and since $\Inn(P/K)$ is normal in $\Aut(P/K)$, this 
implies that $\Inn(P/K)$ acts trivially on $T$.
\end{proof}

If $\catC$ has only one object $G$ then we obtain results on the simple $RB^\catC(G,G)$-modules. Let $\calS$ 
denote a set of representatives for the isomorphism classes of sections $(P,K)$ of $G$ satisfying $e_{(P,K)}\in\catC$, and for each $(P,K)\in\calS$, let $\calT_{(P,K)}$ denote a set of representatives for the isomorphism classes of simple $R\Aut_\catC(P/K)$-modules. The simple $RB^\catC(G,G)$-modules are parametrized by pairs $((P,K),T)$ with $(P,K)\in\calS$ and $T\in\calT_{(P,K)}$ such that there exist sections $(P',K')$ and $(P'',K'')$ of $G$ and an isomorphism $\tau\colon P''/K''\liso P'/K'$ with $(P',K',\tau,P'',K'')$ such that the condition (\ref{eqn thm condition}) is satisfied. In this case we say that $((P,K),T)$ {\em parametrizes a simple $RB^\catC(G,G)$-module}.

\smallskip
The following theorem improves the results in \cite[Chapter~6]{BoucSLN} on the parametrization of simple $RB(G,G)$-modules in the case that $R$ is a field of characteristic $0$. It follows immediately from Corollary~\ref{cor simples} and Corollary~\ref{cor section-closed, abelian}(b).

\begin{theorem}\label{thm simple RB(G,G)-modules}
Assume that $G$ is a finite group and that $\catC$ is a submonoid of 
$(\scrS_{G\times G},*)$ satisfying the condition in (\ref{eqn closed}) and that $\catC$ is closed under $G\times G$-conjugation, and assume the notation from above. Let $(P,K)\in\calS$ and $T\in\calT_{P,K}$. 

\smallskip
{\rm (a)} If $((P,K),T)$ parametrizes a simple $RB^\catC(G,G)$-module 
then the restriction of $T$ to a subgroup $B$ of $\Aut_\catC(P/K)$ 
contains the trivial $RB$-module as a constituent; in particular, $\Inn(P/K)$ acts trivially on $T$.

\smallskip
{\rm (b)} If $P/K$ is abelian then $((P,K),T)$ parametrizes a simple $RB^\catC(G,G)$-module.
\end{theorem}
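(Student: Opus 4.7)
The plan is to deduce both parts from results already established by specialising the general machinery to the one-object setting in which $\catC$ is viewed as a submonoid of $(\scrS_{G\times G},*)$, i.e., a category with $\Ob(\catC)=\{G\}$ and $\catC_{G,G}=\catC$. The bridge I will use is Green's theory of idempotent condensation: via Example~\ref{ex Bouc algebra}(c) together with Proposition~\ref{prop fixed points}(b), we have an $R$-algebra isomorphism $RB^\catC(G,G)\cong e_\catC A_{\catC,R}^\kappa e_\catC$ with $e_\catC=e_G$. Consequently, by \ref{noth condense}, the isomorphism classes of simple $RB^\catC(G,G)$-modules correspond bijectively with the isomorphism classes of simple $A_{\catC,R}^\kappa$-modules $S$ satisfying $e_\catC\cdot S\neq\{0\}$. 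Combining this correspondence with Theorem~\ref{thm simples of A} and Theorem~\ref{thm simples of RB(G,G)}, the statement that $((P,K),T)$ parametrises a simple $RB^\catC(G,G)$-module (i.e., the existence of sections $(P',K')$, $(P'',K'')$ and an isomorphism $\tau$ making the inequality (\ref{eqn thm condition}) hold in $R$) is exactly equivalent to the non-vanishing condition $e_\catC\cdot S_{(P,K),T}\neq\{0\}$.

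For part (a) I would start from the assumption that $((P,K),T)$ parametrises a simple $RB^\catC(G,G)$-module, which by the equivalence just recorded is exactly the hypothesis $e_\catC\cdot S_{(P,K),T}\neq\{0\}$ needed to invoke Corollary~\ref{cor simples}. That corollary delivers the required subgroup $B\in\scrA_\catC(P/K)$ of $\Aut_\catC(P/K)$ such that the restriction of $T$ to $B$ contains the trivial $RB$-module as a constituent; and since every $B\in\scrA_\catC(P/K)$ contains $\Inn(P/K)$, the additional conclusion that $\Inn(P/K)$ acts trivially on $T$ is immediate.

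For part (b) I would instead invoke Corollary~\ref{cor section-closed, abelian}(b): whenever $P/K$ is abelian, it guarantees $e_\catC\cdot S_{(P,K),T}\neq\{0\}$. Applying the equivalence from the first paragraph in the reverse direction then yields that $((P,K),T)$ parametrises a simple $RB^\catC(G,G)$-module, as claimed.

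I expect no substantive obstacle: the entire content of the theorem is absorbed by the earlier Corollaries~\ref{cor simples} and~\ref{cor section-closed, abelian}(b), via the Green-theoretic bijection between simple modules of $e_\catC A_{\catC,R}^\kappa e_\catC$ and those simple $A_{\catC,R}^\kappa$-modules not annihilated by $e_\catC$. The only care required is in verifying that the definition of ``parametrises a simple $RB^\catC(G,G)$-module'' does coincide, through Theorem~\ref{thm simples of RB(G,G)}, with the condition $e_\catC\cdot S_{(P,K),T}\neq\{0\}$; this is just unpacking the definitions.
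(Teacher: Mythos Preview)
Your proposal is correct and follows essentially the same approach as the paper: the paper simply states that the theorem follows immediately from Corollary~\ref{cor simples} (for part~(a)) and Corollary~\ref{cor section-closed, abelian}(b) (for part~(b)), which are exactly the two results you invoke. Your additional unpacking of the Green-theoretic bridge between ``parametrises a simple $RB^\catC(G,G)$-module'' and $e_\catC\cdot S_{(P,K),T}\neq\{0\}$ is already set up in the paragraph immediately preceding the theorem, so the paper treats it as implicit.
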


The following corollary is also a consequence of Corollary~\ref{cor section-closed, abelian}(a) and Corollary~\ref{cor simples}.

\begin{corollary}\label{cor section-closed 2}
Assume that $\catC$ satisfies Condition~(\ref{eqn closed}) and that for 
any two objects 
$G$ and $H$ of $\catC$ the set $\catC_{G,H}$ is closed under $G\times H$-conjugation.
Assume further that $\catC$ is finite and that the objects of $\catC$ are closed 
under taking subquotients in the following sense: for any object $G$ of $\catC$ and any section 
$(P,K)$ of $G$ with $e_{(P,K)}\in\catC_{G,G}$ there exists an 
object $H$ of $\catC$ with 
$H\cong P/K$.

Then the isomorphism classes of simple $e_\catC A_{\catC,R}^\kappa e_\catC$-modules 
are parametrized by pairs $(G, T)$, where $G$ runs through a set $\calS$ of representatives 
of the isomorphism classes  of groups $G$ occurring as objects of $\catC$ and, for each $G\in\calS$, $T$ runs through a set of representatives for the isomorphism classes of simple $R\Out_\catC(G)$-modules. Here, $\Out_\catC(G)$ is defined as $\Aut_\catC(G)/\Inn(G)$.
\end{corollary}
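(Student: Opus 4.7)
The plan is to combine Green's condensation dictionary (\ref{noth condense}) with the parametrization from Theorem~\ref{thm simples of A}, and then use the necessary and sufficient conditions for $e_\catC \cdot S \neq \{0\}$ supplied by Corollary~\ref{cor simples} and Corollary~\ref{cor section-closed, abelian}(a). Writing $A := A_{\catC,R}^\kappa$, Green's theory says that the isomorphism classes of simple $e_\catC A e_\catC$-modules are in bijection with the isomorphism classes of simple $A$-modules $S$ satisfying $e_\catC\cdot S\neq\{0\}$, via $S\mapsto e_\catC S$. By Theorem~\ref{thm simples of A}, the simple $A$-modules are, in turn, parametrized by pairs $((P,K),T)$, where $(P,K)$ runs through a set of representatives of the isomorphism classes of sections of objects of $\catC$ with $e_{(P,K)}\in\catC_{G,G}$, and $T$ runs through a set of representatives of the isomorphism classes of simple $R\Aut_\catC(P/K)$-modules.

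Next I would prove that, under the hypotheses of the corollary, $e_\catC\cdot S_{(P,K),T}\neq\{0\}$ if and only if $\Inn(P/K)$ acts trivially on $T$. The necessity is exactly Corollary~\ref{cor simples}. For sufficiency, the subquotient-closure hypothesis guarantees an object $H$ of $\catC$ with $H\cong P/K$, which is precisely the extra input needed to apply Corollary~\ref{cor section-closed, abelian}(a), yielding $e_\catC\cdot S_{(P,K),T}\neq\{0\}$ as soon as $\Inn(P/K)$ acts trivially on $T$.

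To match the promised parametrization by pairs $(G,T)$, I would then change representatives of the sections: for every isomorphism class appearing above, choose the representative $(H,1)$ for an object $H$ of $\catC$ (possible by the subquotient-closure hypothesis, since each isomorphism class of sections with $e_{(P,K)}\in\catC_{G,G}$ contains some $(H,1)$, where $H\cong P/K$ is an object of $\catC$ and $e_{(H,1)}=\Delta(H)\in\catC_{H,H}$ is the identity morphism). By Lemma~\ref{lem closed 2}, the group $\Aut_\catC(P/K)$ is canonically (up to inner automorphism) identified with $\Aut_\catC(H)$, so the choice of representative does not affect the parametrizing set of simple modules. Finally, a simple $R\Aut_\catC(H)$-module on which $\Inn(H)$ acts trivially is the same thing as a simple module for the quotient group algebra $R\Out_\catC(H)=R(\Aut_\catC(H)/\Inn(H))$, and this finishes the parametrization.

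The only point that requires some care, and which I would single out as the main technical obstacle, is the well-definedness of the re-indexing $(P,K)\rightsquigarrow H$: one must verify that different objects $H,H'\in\Ob(\catC)$ with $H\cong H'$ produce identified families of simples (hence the quotient by isomorphism of objects in the final statement), and that the intrinsic group $\Aut_\catC(H)$ and its normal subgroup $\Inn(H)$ are independent of the choice of representative section $(P,K)$ up to the canonical isomorphism from Lemma~\ref{lem closed 2}. Once this compatibility is in place, the bijection in the statement is immediate from the string of bijections assembled above.
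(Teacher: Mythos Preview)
Your argument is correct and follows essentially the same route as the paper's proof: choose representatives of the form $(H,1)$ with $H\in\Ob(\catC)$ (using the subquotient-closure hypothesis), then combine Corollary~\ref{cor simples} for the necessary direction with Corollary~\ref{cor section-closed, abelian}(a) for the sufficient direction to show $e_\catC\cdot S_{(P,K),T}\neq\{0\}\iff\Inn(P/K)$ acts trivially on $T$. Your treatment of the re-indexing via Lemma~\ref{lem closed 2} is in fact more explicit than the paper's, which simply declares at the outset that the representatives in $\calS$ can be taken of the form $(G,1)$.
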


\begin{proof}
First note that, since $\catC$ is closed under taking subquotients, the set $\calS$ 
(in Theorem~\ref{thm simples of A}) 
of representatives for the isomorphism classes of sections $(P,K)$ 
of objects $G$ of $\catC$ satisfying $e_{(P,K)}\in\catC_{G,G}$ 
can be chosen such that $P=G$ and $K=1$. Next assume that if $T$ is a 
simple $R\Aut_\catC(G)$-module. If $e_\catC\cdot S=\{0\}$ for the corresponding 
simple $A_{\catC,R}^\kappa$-module $S$ then Corollary~\ref{cor simples} implies that $\Inn(G)$ acts trivially on $T$. Conversely, if $\Inn(G)$ acts trivially on $T$ then Corollary~\ref{cor section-closed, abelian}(a) implies that $e_\catC\cdot S\neq\{0\}$. This completes the proof of the corollary.
\end{proof}

\begin{remark}\label{rem simples}
Assume that $\catC$ satisfies Condition~(\ref{eqn closed}), that for any two objects 
$G$ and $H$ of $\catC$ the set $\catC_{G,H}$ is closed under $G\times H$-conjugation, and that the objects of $\catC$ are closed under taking subquotients in the sense of Corollary~\ref{cor section-closed 2}.

\smallskip
Corollary~\ref{cor section-closed 2} can be considered as a parametrization result for simple biset functors on $\catC$ over $R$, when $\catC$ is {\em finite} (cf.~Example~\ref{ex Bouc algebra}(c)).
To make the transition from $\catC$ being finite to $\catC$ being arbitrary,
it is straightforward to show that the standard techniques developed in \cite{BoucSLN}, 
especially those in Section 3.3, 4.2 and 4.3 that are used in the parametrization of 
simple biset functors, still work if one weakens the requirement of an 
{\em admissible subcategory}, cf.~\cite[Definition 4.1.3]{BoucSLN}, to requiring 
Condition~(\ref{eqn closed}), that the morphism sets $\catC_{G,H}$ are closed under 
$G\times H$-conjugation, and that the objects of $\catC$ are closed under taking sections 
in the sense of Corollary~\ref{cor section-closed 2}. These techniques allow to reduce the 
determination of simple biset functors on $\catC$ over $R$ to the case where $\catC$ is finite. 
Together with Corollary~\ref{cor section-closed 2} we then obtain 
the theorem stated below. There we use Bouc's definition 
of the simple biset functor $S_{(G,T)}$ associated to the pair $(G,T)$, where $G$ is an 
object of $\catC$ and $T$ is a simple $R\Out_\catC(G)$-module. Two pairs $(G,T)$ and 
$(G',T')$ are called {\em isomorphic} if $G$ and $G'$ are isomorphic and if for one 
(or equivalently each) $\tau\colon G'\to G$ with $(G,1,\tau,G',1)\in\catC_{G,G'}$ the 
modules $T'$ and $T$ correspond to each other under the map $c_\tau\colon\Aut_{\catC}(G')\to\Aut_{\catC}(G)$. 
Note that such an isomorphism $\tau$ exists. The parametrization in the following theorem 
slightly generalizes Bouc's parametrization in \cite[Theorem~4.3.10]{BoucSLN} where 
$\Aut_\catC(P/K)$ is assumed to be equal to $\Aut(P/K)$. Unfortunately, our methods 
require $R$ to be a field of characteristic $0$.
\end{remark}

\begin{theorem}\label{thm simple biset functors}
Let $\catC$ be a subcategory of\/ $\catB$, not necessarily finite. 
Assume that $\catC$ 
satisfies Condition~(\ref{eqn closed}), that for any two objects $G$ and 
$H$ of\/ $\catC$ 
the morphism set\/ $\catC_{G,H}$ is closed under $G\times H$-conjugation, 
and that\/ $\catC$ 
is closed under taking sections in the sense of Corollary~\ref{cor section-closed 2}.
Then the map 
$(G,T)\mapsto S_{(G,T)}$ induces a bijection between the set of isomorphism 
classes of pairs 
$(G,T)$, where $G$ is an object of\/ $\catC$ and $T$ is a simple 
$R\Out_\catC(G)$-module, 
and the set of isomorphism classes of simple biset functors for 
$\catC$ over $R$. 
\end{theorem}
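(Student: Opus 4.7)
The plan is to carry out the reduction outlined in Remark~\ref{rem simples}: establish the bijection first when $\catC$ is finite, via Corollary~\ref{cor section-closed 2} combined with the identification $\bigoplus_{G,H\in\Ob(\catC)}RB^\catC(G,H)\cong e_\catC A_{\catC,R}^\kappa e_\catC$ of Example~\ref{ex Bouc algebra}(c), and then extend to general $\catC$ by a direct-limit argument in the spirit of \cite[Sections~3.3, 4.2, 4.3]{BoucSLN}.

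First I would define $S_{(G,T)}$ in Bouc's standard way, as the unique simple quotient of the representable-type biset functor $L_{G,T}$ given by $L_{G,T}(H):=RB^\catC(H,G)\otimes_{R\Aut_\catC(G)}\tilde T$, where $\tilde T$ denotes $T$ inflated from $\Out_\catC(G)$ to $\Aut_\catC(G)$. A direct evaluation yields $S_{(G,T)}(G)\cong T$ as $R\Out_\catC(G)$-modules, giving well-definedness of $(G,T)\mapsto S_{(G,T)}$ on isomorphism classes as well as injectivity. For surjectivity, given a simple biset functor $S$ on $\catC$, I would choose an object $G\in\Ob(\catC)$ of minimal order with $S(G)\neq\{0\}$; here the closure of $\Ob(\catC)$ under subquotients is essential. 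Any morphism $L\in\catC_{G,G}$ with $q(L)\neq[G]$ has $|p_1(L)/k_1(L)|<|G|$ and, via Lemma~\ref{lem butterfly} together with the idempotent $e_{(p_1(L),k_1(L))}\in\catC_{G,G}$ furnished by Lemma~\ref{lem closed 1}, factors through an object $H\in\Ob(\catC)$ with $H\cong p_1(L)/k_1(L)$ of strictly smaller order, so its action on $S(G)$ vanishes by minimality. The remaining morphisms have $q(L)=[G]$, which forces $p_1(L)=G=p_2(L)$ and $k_1(L)=1=k_2(L)$, so $L=\Delta_\alpha(G)$ with $\alpha\in\Aut_\catC(G)$; a direct computation shows that the $G\times G$-conjugacy classes of such graphs are in bijection with $\Out_\catC(G)$. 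Hence $S(G)$ inherits the structure of a simple $R\Out_\catC(G)$-module $T$, and the natural transformation $L_{G,T}\to S$ adjoint to the inclusion $T\hookrightarrow S(G)$ is surjective by minimality of $G$, yielding $S\cong S_{(G,T)}$.

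The main obstacle will be two technical issues where our setting departs from Bouc's. First, Condition~(\ref{eqn closed}) only guarantees that \emph{some} morphism between any two isomorphic subquotients exists, rather than all abstract isomorphisms being morphisms as in Bouc's admissible subcategories \cite[Definition~4.1.3]{BoucSLN}; one must therefore track throughout that the quotient of $RB^\catC(G,G)$ acting faithfully on $S(G)$ is $R\Out_\catC(G)$ rather than $R\Out(G)$, and that the appropriate notion of isomorphism of pairs $(G,T)$ is the one given in Remark~\ref{rem simples}. Second, when $\catC$ is infinite, neither $\bigoplus_{G,H}RB^\catC(G,H)$ nor $A_{\catC,R}^\kappa$ is unital, so Corollary~\ref{cor section-closed 2} does not apply directly; one must approximate $\catC$ by finite full subcategories $\catC'\subseteq\catC$ that themselves satisfy Condition~(\ref{eqn closed}), the closure of morphism sets under conjugation, and closure under subquotients, and then check that $S_{(G,T)}$ is independent of the choice of such a $\catC'$ containing $G$ and of the subquotients arising in the surjectivity argument. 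This independence is the compatibility of inflation and restriction functors between the biset functor categories of $\catC'$ and $\catC$, and proceeds along the lines of \cite[Section~3.3]{BoucSLN}.
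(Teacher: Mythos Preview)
Your proposal is correct and follows essentially the same approach as the paper. The paper does not provide a formal proof of this theorem; instead, Remark~\ref{rem simples} serves as the proof sketch, stating that Bouc's techniques from \cite[Sections~3.3, 4.2, 4.3]{BoucSLN} carry over verbatim under the weakened hypotheses (Condition~(\ref{eqn closed}) in place of admissibility), and that combining this reduction with Corollary~\ref{cor section-closed 2} in the finite case yields the result. Your proposal is precisely a fleshing-out of that sketch: you spell out Bouc's construction of $S_{(G,T)}$ as the simple head of $L_{G,T}$, the minimal-object argument for surjectivity, and you correctly identify the two places where care is needed---tracking $\Out_\catC(G)$ rather than $\Out(G)$, and passing to finite full subcategories closed under subquotients to invoke Corollary~\ref{cor section-closed 2}.
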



\section{Cyclic Groups}\label{sec cyclic}

Throughout this section $R$ denotes a commutative ring and $\scrD$ denotes a
set of cyclic groups such that, for all $G\in\scrD$, the group order $|G|$ 
is a unit in $R$. By $\varphi$ we denote Euler's totient function.
The aim of this section is to show that if, in addition, also
$\varphi(|G|)$, for $G\in\scrD$, are units in $R$ then
the $R$-algebra $\bigoplus_{G,H\in\scrD}RB(G,H)$ is isomorphic
to the category algebra $R\catCtilde$, where $\catCtilde$ is the
finite inverse category associated with the set $\scrD$ that
was introduced in Definition \ref{defi group cat}. Thus we 
will, in particular, be able to invoke the results quoted in Section \ref{sec category algebras} 
to deduce that if $R$ is a field
such that, for every $G\in\scrD$, both $|G|$ and $\varphi(|G|)$ 
are non-zero in $R$ and if $\scrD$ is finite then $\bigoplus_{G,H\in\scrD}RB(G,H)$ is a semisimple 
$R$-algebra. In this case we 
will give an explicit decomposition of this algebra as a direct product of matrix algebras over group algebras.

We set 
\begin{equation*}
  B_R^\scrD:=\bigoplus_{G,H\in\scrD} RB(G,H)\quad\text{and}\quad
  \Atilde_R^\scrD:=\bigoplus_{G,H\in\scrD} R\scrS_{G\times H}\,.
\end{equation*}
These two $R$-modules are equipped with the following multiplications: for $G,H,H',K\in\scrD$, 
and for $a\in RB(G,H)$, $b\in RB(H',K)$, $x\in R\scrS_{G\times H}$, and $y\in R\scrS_{H'\times K}$, we have
\begin{equation*}
  a\cdot b := 
  \begin{cases} a\cdot_H b & \text{if $H=H'$,}\\
                        0 & \text{if $H\neq H'$,}
  \end{cases}
\end{equation*}
cf.~Example~\ref{ex Bouc algebra}(c), and
\begin{equation*}
  x\startilde^\kappa y:= 
  \begin{cases} x\startilde_H^\kappa y & \text{if $H=H'$,}\\
                         0 & \text{if $H\neq H'$.}
  \end{cases}
\end{equation*}
Note that $G\times H$ acts trivially on $R\scrS_{G\times H}$ when $G$ and 
$H$ are abelian. Therefore, Propositions~\ref{prop B(G)}, \ref{prop alpha compatible} 
and \ref{prop zeta} imply the following proposition.

\begin{proposition}\label{prop iso BD AtildeD}
The composition of the maps $(\alpha_{G,H})_{G,H\in\scrD}\colon B^\scrD_R\to\bigoplus_{G,H\in\scrD} R\scrS_{G\times H}$ 
and $(\zeta_{G,H})_{G,H\in\scrD}\colon \bigoplus_{G,H\in\scrD} R\scrS_{G\times H}\to \Atilde_R^\scrD$ 
defines an isomorphism 
\begin{equation*}
  \gamma\colon (B_R^\scrD,\cdot)\liso (\Atilde_R^\scrD,\startilde^\kappa)
\end{equation*}
of $R$-algebras.
\end{proposition}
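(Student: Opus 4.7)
The plan is to reduce the proposition to the three main preceding results---Propositions~\ref{prop B(G)}, \ref{prop alpha compatible} and \ref{prop zeta}---by exploiting that the groups in $\scrD$ are abelian. First I would observe that since every $G\in\scrD$ is cyclic, and hence abelian, for any $G,H\in\scrD$ the conjugation action of $G\times H$ on $\scrS_{G\times H}$ is trivial, so
\begin{equation*}
  (R\scrS_{G\times H})^{G\times H} = R\scrS_{G\times H}\,.
\end{equation*}
This is the only place where the cyclic (really, abelian) hypothesis enters the argument.

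Next, since $|G|$ and $|H|$, and therefore $|G\times H|$, are invertible in $R$, applying Proposition~\ref{prop B(G)}(c) to the group $G\times H$ shows that $\alpha_{G,H}\colon RB(G,H)\to (R\scrS_{G\times H})^{G\times H}$ is an $R$-module isomorphism; by the previous observation, its target equals $R\scrS_{G\times H}$. Together with the fact that $\zeta_{G,H}\colon R\scrS_{G\times H}\to R\scrS_{G\times H}$ is always an $R$-module isomorphism (Definition~\ref{def alpha,zeta,rho}), this yields, componentwise, that $\gamma$ is a bijective $R$-linear map $B_R^\scrD\to \Atilde_R^\scrD$.

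Finally I would verify multiplicativity block by block. For $G,H,H',K\in\scrD$, $a\in RB(G,H)$, $b\in RB(H',K)$, if $H=H'$ then Proposition~\ref{prop alpha compatible} gives
\begin{equation*}
  \alpha_{G,K}(a\cdot_H b) = \alpha_{G,H}(a) *_H^\kappa \alpha_{H,K}(b),
\end{equation*}
and applying $\zeta_{G,K}$ and then Proposition~\ref{prop zeta}(a) yields $\gamma(a\cdot b) = \gamma(a)\,\startilde^\kappa\,\gamma(b)$. If $H\neq H'$, both products vanish by the block definitions of the multiplications on $B_R^\scrD$ and $\Atilde_R^\scrD$, so equality holds trivially. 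Since no individual step presents difficulties---everything has been set up in the previous sections---the main conceptual point is simply the abelianness observation in the first step, which identifies the image of $\alpha_{G,H}$ with all of $R\scrS_{G\times H}$ rather than with a proper fixed-point subspace; without this remark $\gamma$ would only be an isomorphism onto a proper subalgebra.
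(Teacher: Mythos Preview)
Your proof is correct and follows exactly the approach the paper takes: the paper does not give a formal proof but simply notes that since $G$ and $H$ are abelian the $G\times H$-action on $R\scrS_{G\times H}$ is trivial, and then invokes Propositions~\ref{prop B(G)}, \ref{prop alpha compatible} and \ref{prop zeta}. You have spelled out precisely these steps, including the key observation that abelianness forces $(R\scrS_{G\times H})^{G\times H}=R\scrS_{G\times H}$ so that $\alpha_{G,H}$ is surjective.
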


Next we will focus on the structure of the $R$-algebra $(\Atilde_R^\scrD,\startilde^\kappa)$. 
First we will introduce a new $R$-basis of $\Atilde_R^\scrD$ and then, in 
Theorem~\ref{thm product of hats}, show that the product of any two of these 
basis elements is a multiple of a basis element, or is equal to 0. This will require several steps.

\begin{notation}
For finite groups $G$ and $H$ and a subgroup $L\le G\times H$, we set
\begin{equation*}
  \Ltilde:=\sum_{L'\in\scrP(L)} L'\in R\scrS_{G\times H}\,,
\end{equation*}
where
\begin{equation*}
  \scrP(L)=\{L'\le L\mid p_1(L')=p_1(L),\ p_2(L')=p_2(L)\}\,,
\end{equation*}
cf.~Definition~\ref{defi group cat}. Note that the elements in
$\{\Ltilde\mid L\le G\times H\}$, form again an $R$-basis of $R\scrS_{G\times H}$. We also set
\begin{equation*}
  s(L):=[p_1(L):k_1(L)] = [p_2(L):k_2(L)]\,.
\end{equation*}
Note that $s(L)=1$ if and only if $L=p_1(L)\times p_2(L)$ is a direct product.
\end{notation}

In order to see that the basis elements $\Ltilde$, $L\le G\times H$, have 
the desired property we first consider the special case of cyclic $p$-groups $G$ and $H$.

\begin{lemma}\label{lemma direct1}
Let $p$ be prime, and let $G$ and $H$ be
cyclic $p$-groups.

{\rm (a)}\, Let $M'\leq M\leq G\times H$ be such that $p_2(M')=p_2(M)$ and
$p_1(M')<p_1(M)$. Then $M=p_1(M)\times p_2(M)$;
in particular, $M'\leq U\times p_2(M)\leq M$, where
$U< p_1(M)$ is such that $[p_1(M):U]=p$.

{\rm (b)}\, Let $L'\leq L\leq G\times H$ be such that $p_1(L')=p_1(L)$ and
$p_2(L')<p_2(L)$. Then $L=p_1(L)\times p_2(L)$;
in particular, $L'\leq p_1(L)\times V\leq L$, where $V< p_2(L)$ is 
such that $[p_2(L):V]=p$.
\end{lemma}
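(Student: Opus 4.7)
The plan is to prove part (a) directly, exploiting that subgroups of a cyclic $p$-group are totally ordered by inclusion, and then deduce part (b) by a symmetry argument using the opposite-group construction from \ref{noth bisets}(b).

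Write $P_i := p_i(M)$, $K_i := k_i(M)$, $P_i' := p_i(M')$, $K_i' := k_i(M')$, and recall the isomorphism $\eta_M\colon P_2/K_2 \myiso P_1/K_1$. The goal is to show $K_1=P_1$; then Equation~(\ref{eqn |L|}) forces $K_2=P_2$ too, whence $M=K_1\times K_2=P_1\times P_2$.

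The key construction is $N := M'\cdot (K_1\times K_2)\le M$. Consider the canonical surjection $M\to M/(K_1\times K_2)$; via $p_2$ this quotient is identified with $P_2/K_2$. Since $p_2(M')=P_2$, the image of $M'$ already surjects onto $P_2/K_2$, so $M'(K_1\times K_2) = M$, i.e.\ $N = M$. Projecting to the first component gives
\begin{equation*}
  P_1 = p_1(M) = p_1(M'\cdot(K_1\times K_2)) = P_1'\cdot K_1\,.
\end{equation*}
Now $P_1'$ and $K_1$ are subgroups of the cyclic $p$-group $P_1$, hence comparable: either $P_1'\le K_1$, in which case $P_1 = P_1'K_1 = K_1$, or $K_1\le P_1'$, in which case $P_1 = P_1'K_1 = P_1'$. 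The second option is excluded by the hypothesis $P_1'<P_1$, so $K_1=P_1$, as required. For the ``in particular'' statement, once $M=P_1\times P_2$ is established, the unique subgroup $U<P_1$ of index $p$ contains every proper subgroup of $P_1$ (since $P_1$ is cyclic of prime-power order), so $P_1'\le U$, and therefore $M'\le P_1'\times P_2\le U\times P_2\le M$.

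Part (b) follows formally from part (a). For $L\le G\times H$ with $L'\le L$ satisfying $p_1(L')=p_1(L)$ and $p_2(L')<p_2(L)$, pass to the opposite groups $L^\circ\le H\times G$ and $(L')^\circ\le L^\circ$, noting that $p_1(L^\circ)=p_2(L)$ and $p_2(L^\circ)=p_1(L)$. The hypotheses of part (a) then hold for $(L')^\circ\le L^\circ$, giving $L^\circ = p_1(L^\circ)\times p_2(L^\circ)$, which in $G\times H$ reads $L = p_1(L)\times p_2(L)$; the containment $L'\le p_1(L)\times V\le L$ for the subgroup $V<p_2(L)$ of index $p$ is obtained from the corresponding statement in part (a) by taking opposites. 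The only genuinely nontrivial step is the use of the total ordering of subgroups of a cyclic $p$-group to conclude $K_1=P_1$ from $P_1 = P_1'K_1$ with $P_1'<P_1$; everything else is formal manipulation of the quintuple $(P_1,K_1,\eta_M,P_2,K_2)$.
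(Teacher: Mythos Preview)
Your proof is correct. The paper takes a more element-level route: it picks $(g,h)\in M$ with $\langle g\rangle = p_1(M)$, uses $p_2(M')=p_2(M)$ to find $(g',h)\in M'$, and observes that $g^{-1}g'\in k_1(M)$ still generates $p_1(M)$ (since in a cyclic $p$-group the product of a generator and a non-generator is again a generator). Your argument is more structural: you use the isomorphism $\bar{p}_2\colon M/(k_1(M)\times k_2(M))\myiso p_2(M)/k_2(M)$ from \ref{noth groups}(c) to deduce $M=M'\cdot(k_1(M)\times k_2(M))$, then project to the first factor to obtain $p_1(M)=p_1(M')\cdot k_1(M)$ and invoke comparability of subgroups. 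Both arguments rest on the same fact about cyclic $p$-groups, just phrased differently --- the paper at the level of elements (generator versus non-generator), you at the level of the subgroup lattice (total ordering). Your derivation of (b) from (a) via the opposite-group construction is a tidy formalization of the paper's ``analogously''.
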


\begin{proof}
We verify Part~(a); Part~(b) is proved analogously.
So let $(g,h)\in M$ be such that
$\langle g\rangle=p_1(M)$, thus $g\notin p_1(M')$. 
Since $h\in p_2(M)=p_2(M')$, there is some $g'\in p_1(M')$
such that $(g',h)\in M'$. Moreover,
$\langle g^{-1}g'\rangle=p_1(M)$, and $g^{-1}g'\in k_1(M)$. Hence
$k_1(M)=p_1(M)$, that is, $s(M)=1$.
\end{proof}

\begin{proposition}\label{prop hat}
Let $p$ be a prime that is invertible in $R$, let $G$, $H$, and $K$ be cyclic $p$-groups and let
$L\leq G\times H$ and $M\leq H\times K$ be subgroups such that $p_2(L)=p_1(M)$. Then,
in $R\scrS_{G\times K}$, one has
\begin{equation*}
  \Ltilde\startilde^\kappa_{H} \Mtilde=\begin{cases}
  \frac{\varphi(|k_2(L)\cap k_1(M)|)}{|H|}\cdot \widetilde{L*M}& \text{if $s(L)=1=s(M)$,}\\
  \frac{|k_2(L)\cap k_1(M)|}{|H|}\cdot \widetilde{L*M} &\text{otherwise.}
  \end{cases}
\end{equation*}
\end{proposition}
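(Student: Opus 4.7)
By $R$-bilinearity, $\Ltilde\startilde^\kappa_H\Mtilde=\sum_{N\le G\times K}c_N\cdot N$ with $c_N=\sum_{(L',M')\in\scrP(L)\times\scrP(M)}a_N^{L',M'}$. By Corollary~\ref{cor aLMN}, $c_N$ vanishes unless $p_1(N)=p_1(L)$ and $p_2(N)=p_2(M)$, and since $L'*M'\le L*M$ whenever $(L',M')\le (L,M)$, the set $\scrY_N^{L',M'}$ is empty when $N\not\le L*M$; thus $c_N=0$ unless $N\in\scrP(L*M)$. The result therefore lies in the $R$-span of $\scrP(L*M)$, and it remains to show that $c_N$ takes the claimed common value for each $N\in\scrP(L*M)$. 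Substituting Theorem~\ref{thm aLMN} and interchanging summations gives
\begin{equation*}
  c_N=\sum_{(L'',M'')\in\scrY_N^{L,M}}\kappa(L'',M'')\cdot\sigma_N(L'',M''),
\end{equation*}
where $\sigma_N(L'',M''):=\sum_{(L'',M'')\le(L',M')\in T_N}\mu^{\scrY_N}_{(L'',M''),(L',M')}$ and $T_N:=(\scrP(L)\times\scrP(M))\cap\scrY_N$.

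When at least one of $s(L), s(M)$ exceeds $1$, Lemma~\ref{lemma direct1} forces $T_N=\scrY_N^{L,M}$: for instance, if $s(L)>1$, any $(L',M')\in\scrY_N^{L,M}$ satisfies $p_1(L')=p_1(L)$, so Lemma~\ref{lemma direct1}(b) gives $p_2(L')=p_2(L)$, whence $p_1(M')=p_2(L')=p_2(L)$, placing the pair in $T_N$. The inner Möbius sums then collapse via the standard identity $\sum_{y\in[x,z]}\mu^{\scrY_N}(x,y)=\delta_{x,z}$ to $\sigma_N(L'',M'')=\delta_{(L'',M''),(L,M)}$, yielding $c_N=\kappa(L,M)=|k_2(L)\cap k_1(M)|/|H|$, as claimed.

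When $s(L)=s(M)=1$, both $L=P_1\times P_2$ and $M=P_2\times P_4$ are direct products, $T_N$ is strictly contained in $\scrY_N^{L,M}$, and the analysis requires tracking contributions from different pair types in $\scrP(L)\times\scrP(M)$. The ``graph''-type pairs (those with $k_1(L')=1=k_2(M')$) satisfy the hypothesis of Proposition~\ref{prop left-free}, which yields $L'\startilde^\kappa_H M'=\frac{1}{|H|}L'*M'$; summing these over all $|\Aut(P_2)|$ factorizations of a given composite isomorphism through $P_2$ produces the factor $\varphi(|P_2|)/|H|$. The remaining (non-graph) pairs in $T_N$ contribute either zero (by Möbius cancellation in their $a^{L',M'}_N$) or combine with alternating signs to yield the same scalar $\varphi(|P_2|)/|H|$, depending on the kernel structure of $N$; since $k_2(L)\cap k_1(M)=P_2$ here, this matches the claim $\varphi(|k_2(L)\cap k_1(M)|)/|H|$.

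The principal technical obstacle is the bookkeeping in the doubly-trivial case: one must verify, case-by-case according to the kernel structure of $N\in\scrP(L*M)$, that the contributions from the various classes of pairs in $\scrP(L)\times\scrP(M)$ combine with appropriate alternating signs to yield exactly the Euler-totient factor $\varphi(|P_2|)$. The totally ordered subgroup lattice of a cyclic $p$-group (encoded by Lemma~\ref{lemma direct1}) is what keeps the posets $\scrY_N^{L',M'}$ sufficiently explicit for this verification to be carried out by finite computation.
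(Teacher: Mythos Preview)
Your setup and your treatment of the case where $s(L)>1$ or $s(M)>1$ are correct and match the paper's argument. The gap is in the case $s(L)=1=s(M)$.

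First, a minor point: your invocation of Proposition~\ref{prop left-free} is miscited. That proposition assumes $k_1(L')=1=k_1(M')$ (or $k_2(L')=1=k_2(M')$), not the mixed condition $k_1(L')=1=k_2(M')$ you state. More importantly, you do not actually carry out the argument. You assert that the graph-type pairs produce $\varphi(|P_2|)/|H|$ and that the remaining pairs ``contribute either zero \ldots\ or combine with alternating signs to yield the same scalar'', and you then describe this as ``bookkeeping'' to be settled by ``finite computation''. This is precisely the content of the proposition; it cannot be left as an unperformed verification, and as written there is no reason to believe the cancellations you describe actually occur uniformly for every $N\in\scrP(L*M)$.

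The paper closes this case without any case analysis on $N$ and without splitting into graph/non-graph pairs. Using your own interchange-of-summation identity with $T_N$ replaced by all of $\scrY_N^{L,M}$ gives $\kappa(L,M)=b_N^{L,M}+c_N^{L,M}$, where $b_N^{L,M}$ is your $c_N$ and $c_N^{L,M}$ is the sum of the $a_N^{L',M'}$ over the complement $\scrY_N^{L,M}\smallsetminus T_N$. The key observation (from Lemma~\ref{lemma direct1}) is that when $s(L)=s(M)=1$ and $k_2(L),k_1(M)>1$, this complement is exactly $\scrY_N^{p_1(L)\times V,\,V\times p_2(M)}$ for the unique index-$p$ subgroup $V<p_2(L)=p_1(M)$. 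Applying the \emph{same} M\"obius identity to this smaller pair yields $c_N^{L,M}=\kappa(p_1(L)\times V,\,V\times p_2(M))=\kappa(L,M)/p$, hence $b_N^{L,M}=\kappa(L,M)(1-1/p)=\varphi(|k_2(L)\cap k_1(M)|)/|H|$. The boundary case $k_2(L)=1$ or $k_1(M)=1$ (where the complement is empty for a different reason) is handled by a one-line order argument. This recursive identification of the complement is the missing idea in your proposal.
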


\begin{proof}
By the definition of $-\startilde^\kappa_{H}-$ in Definition \ref{def aLMN}
and by Corollary~\ref{cor aLMN}, we have
\begin{equation}\label{eqn 1}
  \Ltilde\startilde^\kappa_{H}\Mtilde 
  = \sum_{(L',M')\in \scrP(L)\times\scrP(M)}L'\startilde^  \kappa_{H}M'
  = \sum_{(L',M')\in\scrP(L)\times\scrP(M)} \
     \sum_{N\in\scrP(L'*M')} a^{L',M'}_N\cdot N\,,
\end{equation}
since $a^{L',M'}_N=0$ if $N\notin\scrP(L'*M')$.
Note that if $(L',M')\in\scrP(L)\times\scrP(M)$ then $p_2(L')=p_1(M')$, since 
$p_2(L)=p_1(M)$. Moreover, $p_2(L')=p_1(M')$ and $p_2(L)=p_1(M)$ together with 
Lemma~\ref{lem butterfly}(i) imply that 
\begin{equation*}
  p_1(L'*M')=p_1(L')=p_1(L)=p_1(L*M) \quad \text{and} \quad p_2(L'*M')=p_2(M')=p_2(M)=p_2(L*M)\,.
\end{equation*}
Therefore we have $\scrP(L'*M')\subseteq\scrP(L*M)$ and we can write
\begin{equation*}
  \Ltilde \startilde_H^\kappa \Mtilde = \sum_{N\in\scrP(L*M)} b^{L,M}_N \cdot N
\end{equation*}
with uniquely determined elements $b^{L,M}_N\in R$. Moreover, Equation~(\ref{eqn 1}) implies that, for 
$N\in\scrP(L*M)$, we have
\begin{equation*}
  b^{L,M}_N = \sum_{(L',M')\in\scrP(L)\times\scrP(M)} a^{L',M'}_N
  = \sum_{(L',M')\in\scrY^{L,M}_N\cap (\scrP(L)\times\scrP(M))} a^{L',M'}_N\,.
\end{equation*}
The last equation holds, since $a^{L',M'}_N=0$ if $(L',M')\notin\scrY^{L',M'}_N$ and $\scrY^{L',M'}_N\subseteq\scrY^{L,M}_N$.

Next we fix an element $N\in\scrP(L*M)$. To complete the proof of the proposition, 
it suffices to show that 
\begin{equation}\label{eqn bLMN}
  b^{L,M}_N = \begin{cases}
  \frac{\varphi(|k_2(L)\cap k_1(M)|)}{|H|} & \text{ if $s(L)=1=s(M)$,} \\
  \frac{|k_2(L)\cap k_1(M)|}{|H|} & \text{ otherwise.}
  \end{cases}
\end{equation}
Note that by M\"obius inversion in the poset $\scrY_N$ we have
\begin{equation}\label{eqn kappaLM}
  \kappa(L,M) 
  = \sum_{\substack{(L'',M'')\le(L',M')\\ \text{in } \scrY^{L,M}_N}} 
      \mu_{(L'',M''),(L',M')}^{\scrY_N}\cdot \kappa(L'',M'')
  = \sum_{(L',M')\in\scrY^{L,M}_N} a^{L',M'}_N
  = b^{L,M}_N + c^{L,M}_N
\end{equation}
with
\begin{equation*}
  c^{L,M}_N:=\sum_{(L',M')\in\scrY^{L,M}_N\smallsetminus (\scrP(L)\times\scrP(M))} a^{L',M'}_N\,.
\end{equation*}

\smallskip
{\em Claim:} If $L'\le L$ with $p_1(L')=p_1(L)$ and $p_2(L')<p_2(L)$ then $k_2(L')<k_2(L)$. In fact, by Equation~(\ref{eqn |L|}) we have $|L|= |p_1(L)|\cdot |k_2(L)|$ and $|L'|=|p_1(L')|\cdot|k_2(L')|$, and $p_2(L')<p_2(L)$ implies $L'<L$. Similarly, one can see that if $M'\le M$ with $p_2(M')=p_2(M)$ and $p_1(M')<p_1(M)$ then $k_1(M')<k_1(M)$. This settles the claim.

\smallskip
To finish the proof we distinguish three cases (not mutually exclusive, but covering all possibilities):

\smallskip
(i) If $s(L)>1$ or $s(M)>1$ then $\scrY^{L,M}_N\smallsetminus (\scrP(L)\times \scrP(M)) = \emptyset$ by Lemma~\ref{lemma direct1}. Therefore we obtain $c^{L,M}_N=0$ and Equation~(\ref{eqn kappaLM}) implies 
$b^{L,M}_N=\kappa(L,M)$, as desired.

\smallskip
(ii) If $k_2(L)=1$ or $k_1(M)=1$ then the above claim implies $c^{L,M}_N=0$, and Equation~(\ref{eqn kappaLM}) implies $b^{L,M}_N = \kappa(L,M) = 1/|H| = \varphi(|k_2(L)\cap k_1(L)|)/|H|$, as desired.

\smallskip
(iii) If $s(L)=1=s(M)$, $k_2(L)>1$ and $k_1(M)>1$ then, by Lemma~\ref{lemma direct1}, we have $\scrY^{L,M}_N\smallsetminus(\scrP(L)\times\scrP(M)) = \scrY^{p_1(L)\times V,V\times p_2(M)}_N$, where $V<p_2(L)$ is the unique subgroup of index $p$. Thus,
\begin{equation*}  
  c^{L,M}_N = 
  \sum_{(L',M')\in\scrY^{p_1(L)\times V,V\times p_2(M)}_N} a^{L',M'}_N
\end{equation*}
and this is equal to $\kappa(p_1(L)\times V,V\times p_2(M))=\kappa(L,M)/p$, by looking at the first and third term of the equations in (\ref{eqn kappaLM}) applied to $(p_1(L)\times V,V\times p_2(M))$. Now Equation~(\ref{eqn kappaLM}) implies
\begin{equation*}
  b^{L,M}_N = \kappa(L,M)-c^{L,M}_N = \kappa(L,M)-\kappa(L,M)/p = \frac{(1-1/p) \cdot |k_2(L)\cap k_1(M)|}{|H|}
  = \frac{\varphi(|k_2(L)\cap k_1(M)|)}{|H|}\,,
\end{equation*}
and the proof is complete.
\end{proof}

In the sequel we will denote by $\PP$ the set of prime numbers. For a finite abelian group 
$G$ and a prime $p$ we denote by $G_p$ the Sylow $p$-subgroup of $G$.

\begin{definition}
Assume that $G$, $H$, and $K$ are finite abelian groups such that $|H|$ 
is invertible 
in $R$. Let $L\le G\times H$ and $M\le H\times K$ be subgroups. For each $p\in\PP$, 
we define $\lambda_p(L,M)\in R$ as follows: if $p_2(L_p)\neq p_1(M_p)$ then we set 
$\lambda_p(L,M):=0$, and if $p_2(L_p)=p_1(M_p)$ then we set
\begin{equation*}
  \lambda_p(L,M):=\begin{cases}
    \frac{\varphi(|k_2(L_p)\cap k_1(M_p)|)}{|H_p|} & \text{if $s(L_p)=1=s(M_p)$,}\\
      \frac{|k_2(L_p)\cap k_1(M_p)|}{|H_p|} & \text{otherwise.}
    \end{cases}
\end{equation*}
Note that if $p$ does not divide $|H|$ then $\lambda_p(L,M)=1$. Moreover, we define
\begin{equation*}
  \lambda(L,M):=\prod_{p\in\PP} \lambda_p(L,M)\,.
\end{equation*}
Thus, $\lambda(L,M)\neq 0$ if and only if $p_2(L)=p_1(M)$, 
and in this case $\lambda(L,M)$ is invertible in $R$ provided that 
$\varphi(|H|)$ is also invertible in $R$.
\end{definition}

\begin{theorem}\label{thm product of hats}
Assume that $G$, $H$ and $K$ are finite cyclic groups such that $|H|$ is invertible 
in $R$ and let $L\le G\times H$ and $M\le H\times K$ be subgroups. Then
\begin{equation*}
  \Ltilde\startilde^\kappa_H\Mtilde =  \lambda(L,M)\cdot \widetilde{L*M}\,.
\end{equation*}
In particular, $\Ltilde\startilde^\kappa_H\Mtilde = 0$ if and only if $p_2(L)\neq p_1(M)$.
\end{theorem}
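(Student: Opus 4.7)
The plan is to reduce to the cyclic $p$-group case proved in Proposition~\ref{prop hat} via a Sylow (prime-by-prime) decomposition. First handle the vanishing case: if $p_2(L)\neq p_1(M)$, then every $L'\in\scrP(L)$ satisfies $p_2(L')=p_2(L)$ and every $M'\in\scrP(M)$ satisfies $p_1(M')=p_1(M)$, so by Corollary~\ref{cor aLMN} every term in the bilinear expansion of $\Ltilde\startilde^\kappa_H\Mtilde$ vanishes; simultaneously $\lambda(L,M)=0$, and there is nothing to prove.

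From now on assume $p_2(L)=p_1(M)$. Since $G$, $H$, $K$ are cyclic, every $L\le G\times H$ decomposes canonically as $L=\prod_{p\in\PP} L_p$ with $L_p\le G_p\times H_p$ its Sylow $p$-subgroup, giving a bijection $\scrS_{G\times H}\cong\prod_p\scrS_{G_p\times H_p}$. This bijection is compatible with every ingredient appearing in Definition~\ref{def aLMN}: $p_i(L)_p=p_i(L_p)$ and $k_i(L)_p=k_i(L_p)$; the subgroup lattice becomes a product poset, so the M\"obius function satisfies $\mu_{L',L}=\prod_p\mu_{L'_p,L_p}$; the $*$-product respects Sylow decomposition, $(L*M)_p=L_p*M_p$; the sets $\scrP(L)$ factor as $\prod_p\scrP(L_p)$; and the cocycle factors as $\kappa(L,M)=\prod_p \kappa_p(L_p,M_p)$, where $\kappa_p$ denotes the analogous cocycle built from $H_p$. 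Define an $R$-linear isomorphism
\begin{equation*}
\Phi\colon R\scrS_{G\times H}\liso\bigotimes_{p\in\PP} R\scrS_{G_p\times H_p},\qquad L\longmapsto \bigotimes_p L_p,
\end{equation*}
and analogous isomorphisms for the other pairs of groups. The factorization $\scrP(L)=\prod_p\scrP(L_p)$ yields $\Phi(\Ltilde)=\bigotimes_p \widetilde{L_p}$, and similarly $\Phi(\Mtilde)=\bigotimes_p\widetilde{M_p}$ and $\Phi(\widetilde{L*M})=\bigotimes_p\widetilde{L_p*M_p}$.

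The key technical step is to show that $\Phi$ intertwines $\startilde^\kappa_H$ with the termwise tensor product of the prime-local operations $\startilde^{\kappa_p}_{H_p}$, i.e.,
\begin{equation*}
\Phi(L\startilde^\kappa_H M)=\bigotimes_p\bigl(L_p\startilde^{\kappa_p}_{H_p}M_p\bigr)
\end{equation*}
for basis elements $L\in\scrS_{G\times H}$ and $M\in\scrS_{H\times K}$. Substituting the product decompositions of $\mu$ and $\kappa$ into the formula of Definition~\ref{def aLMN} converts the single sum indexed by pairs $(L',M')\le (L,M)$ with $N\le L'*M'$ into a product over primes of analogous sums, giving the coefficient-wise identity $a^{L,M}_N=\prod_p a^{L_p,M_p}_{N_p}$. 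Extending by $R$-bilinearity yields $\Phi(\Ltilde\startilde^\kappa_H\Mtilde)=\bigotimes_p(\widetilde{L_p}\startilde^{\kappa_p}_{H_p}\widetilde{M_p})$.

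Finally, apply Proposition~\ref{prop hat} to each tensor factor: $|H_p|$ divides $|H|$ and is therefore invertible in $R$, so the proposition applies whenever $p_2(L_p)=p_1(M_p)$; if instead $p_2(L_p)\neq p_1(M_p)$, then the vanishing argument from the first paragraph together with $\lambda_p(L,M)=0$ shows that the desired identity still holds. In every case
\begin{equation*}
\widetilde{L_p}\startilde^{\kappa_p}_{H_p}\widetilde{M_p}=\lambda_p(L,M)\cdot\widetilde{L_p*M_p}.
\end{equation*}
Multiplying scalars across primes produces $\lambda(L,M)=\prod_p\lambda_p(L,M)$, and applying $\Phi^{-1}$ to both sides gives $\Ltilde\startilde^\kappa_H\Mtilde=\lambda(L,M)\cdot\widetilde{L*M}$. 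The main obstacle is the intertwining identity for $\Phi$, which rests on the multiplicativity of the M\"obius function on product posets together with the prime factorization of $\kappa$; once those are in place the result follows immediately from Proposition~\ref{prop hat}.
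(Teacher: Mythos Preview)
Your argument is correct and follows the same overall architecture as the paper: reduce to cyclic $p$-groups via the Sylow decomposition $\scrS_{G\times H}\cong\prod_p\scrS_{G_p\times H_p}$, check that $\Phi(\Ltilde)=\bigotimes_p\widetilde{L_p}$, and then invoke Proposition~\ref{prop hat} on each factor.

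The one place you diverge from the paper is in establishing the intertwining identity $\Phi(L\startilde^\kappa_H M)=\bigotimes_p(L_p\startilde^{\kappa_p}_{H_p}M_p)$. You attack it head-on by factoring the structure constant $a^{L,M}_N=\prod_p a^{L_p,M_p}_{N_p}$, using that the index set $\{(L',M')\le(L,M):N\le L'*M'\}$, the M\"obius function, and $\kappa$ all factor over primes. This is correct (the crucial point being that $G\times H$ is abelian, so its subgroup lattice is genuinely the product of the local lattices). The paper instead routes through the untwisted map $*^\kappa_H$: it shows directly that $\theta_{G,K}$ intertwines $*^\kappa_H$ with $\bigotimes_p *^\kappa_{H_p}$ (an easy one-line check since $\kappa$ visibly factors), and then uses the already-proved identity $\zeta_{G,K}(a*^\kappa_H b)=\zeta_{G,H}(a)\startilde^\kappa_H\zeta_{H,K}(b)$ from Proposition~\ref{prop zeta} together with $\theta\circ\zeta=(\bigotimes_p\zeta_p)\circ\theta$ to transport the result to $\startilde^\kappa$. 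Your route is more self-contained; the paper's route avoids unpacking the complicated sum defining $a^{L,M}_N$ by reducing to the simpler operation first.

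One small redundancy: once you assume $p_2(L)=p_1(M)$ you automatically have $p_2(L_p)=p_1(M_p)$ for every $p$, so the contingency you mention in the last paragraph never arises.
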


For the proof of Theorem~\ref{thm product of hats} we will need two lemmas that will 
reduce the theorem to the case of cyclic $p$-groups and therefore to Proposition~\ref{prop hat}. 
For a finite abelian group $G$, one has the canonical bijection 
\begin{equation*}
  \scrS_G\liso \bigtimes_{p\in\PP} \scrS_{G_p}\,,\quad U\mapsto (U_p)_{p\in\PP}\,,
\end{equation*}
which induces an $R$-module isomorphism
\begin{equation*}
  \theta_G\colon R\scrS_G \liso \bigotimes_{p\in\PP} R\scrS_{G_p}\,,
\end{equation*}
where the tensor product is taken over $R$. The inverse of $\theta$ maps the tensor 
product of a collection $U_p$, $p\in \PP$, of subgroups of $G$ to their product 
$\prod_{p\in\PP} U_p$ in $p$. If also $H$ is a finite abelian group then 
$(G\times H)_p=G_p\times H_p$ and we obtain an $R$-module isomorphism
\begin{equation*}
  \theta_{G,H}\colon R\scrS_{G\times H} \liso \bigotimes_{p\in\PP}R\scrS_{G_p\times H_p}\,.
\end{equation*}

The assertions of the following lemmas are straightforward verifications. 
The proof of the first one is left to the reader.

\begin{lemma}\label{lem p reduction 1}
Let $G$ and $H$ be finite abelian groups and let $L\le G\times H$ be a subgroup. Then the following equations hold in $\bigotimes_{p\in\PP} R\scrS_{G_p\times H_p}$:

\smallskip
{\rm (a)}\, $\theta_{G,H}(\sum_{L'\leq L}L')=\otimes_{p\in\mathbb{P}}(\sum_{L_p'\leq L_p}L_p')$;

\smallskip
{\rm (b)}\, $\theta_{G,H}(\Ltilde)= \otimes_{p\in\PP} \widetilde{L_p}$.
\end{lemma}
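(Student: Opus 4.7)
The proof of Lemma~\ref{lem p reduction 1} will rest entirely on the primary decomposition of finite abelian groups together with multilinearity of the tensor product. Since $L \le G\times H$ is abelian, every subgroup of $L$ is the (internal) direct product of its Sylow subgroups, so the canonical map $\scrS_L \to \bigtimes_p \scrS_{L_p}$, $L' \mapsto (L'_p)_p$, is a bijection. Moreover, $L_p = L \cap (G_p\times H_p)$, so the bijection $\scrS_L \to \bigtimes_p\scrS_{L_p}$ is the restriction of the bijection $\scrS_{G\times H} \to \bigtimes_p \scrS_{G_p\times H_p}$ underlying $\theta_{G,H}$. Unwinding the definition of $\theta_{G,H}$, each $L' \le L$ is sent to $\bigotimes_p L'_p$.

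For part~(a) I would then simply sum over all $L' \le L$, parametrize such $L'$ by tuples $(L'_p)_p$ with $L'_p \le L_p$, and invoke distributivity of the tensor product over direct sums:
\[
  \theta_{G,H}\Bigl(\sum_{L'\le L} L'\Bigr) \;=\; \sum_{(L'_p)_p} \bigotimes_p L'_p \;=\; \bigotimes_{p\in\PP}\Bigl(\sum_{L'_p\le L_p} L'_p\Bigr).
\]

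For part~(b) the additional ingredient that I would check first is the compatibility
\[
  p_i(L'_p) \;=\; p_i(L')_p \qquad (i=1,2,\ L'\le L,\ p\in\PP).
\]
This is immediate from the fact that, on finite abelian groups, both the projection $p_i$ and the functor $A \mapsto A_p$ are additive and natural, hence commute with each other. From this compatibility it follows at once that the condition $p_i(L') = p_i(L)$ is equivalent to $p_i(L'_p) = p_i(L_p)$ for every prime $p$, so that $L'\in\scrP(L)$ if and only if $L'_p\in\scrP(L_p)$ for all $p$. Under the bijection above this identifies $\scrP(L)$ with $\bigtimes_p \scrP(L_p)$, and summing and applying distributivity exactly as in~(a) yields $\theta_{G,H}(\Ltilde) = \bigotimes_p \widetilde{L_p}$.

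The only step that is not pure bookkeeping is the compatibility $p_i(L'_p) = p_i(L')_p$; once that is in place both (a) and (b) follow by the same distributive argument. I do not expect any real obstacle here: the verification is a one-line observation from the fact that a homomorphism of finite abelian groups carries the Sylow $p$-part of the source onto the Sylow $p$-part of its image.
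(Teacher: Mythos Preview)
Your argument is correct and is exactly the kind of straightforward verification the paper has in mind; the paper in fact omits the proof entirely, stating only that the assertions are straightforward and leaving this lemma to the reader. Your check of the compatibility $p_i(L'_p)=p_i(L')_p$ and the ensuing distributivity argument is precisely the expected route.
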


\begin{lemma}\label{lem p reduction 2}
Let $G$, $H$ and $K$ be finite abelian groups such that $|H|$ is invertible in $R$. 
Then the diagram
\begin{diagram}[90]
  \movevertex(-135,-10){R\scrS_{G\times H}\otimes R\scrS_{H\times K}} & 
             \movearrow(0,-5){\Ear[150]{-*_H^\kappa-}} & \movevertex(100,-10){R\scrS_{G\times K}} &&
  \movevertex(-120,-80){\sar[220]{\ \wr\ \xi_{G,H,K}\circ(\theta_{G,H}\otimes\theta_{H,K})}}&\movevertex(150,-80){\saR[220]{\wr\ \theta_{G,K}}}&&
  \movevertex(-110,30){\Sear[30]{\zeta_{G,H}\otimes\zeta_{H,K}}}& \movevertex(110,30){\Swar[30]{\zeta_{G,K}}} &&
  \movevertex(-50,30){R\scrS_{G\times H}\otimes R\scrS_{H\times K}} & 
             \movearrow(20,15){\Ear[100]{-\startilde_H^\kappa-}} & \movevertex(60,30){R\scrS_{G\times K}} &&
  \movevertex(-50,20){\saR[65]{\wr\ \xi_{G,H,K}\circ(\theta_{G,H}\otimes\theta_{H,K})}} & &
          \movearrow(60,10){\saR[65]{\wr\ \theta_{G,K}}}&&
  \movevertex(-57,0){\mathop{\bigotimes}\limits_{p\in\PP}\bigl(R\scrS_{G_p\times H_p} \otimes 
             R\scrS_{H_p\times K_p}\bigr)}  &
          \movearrow(18,0){\eaR[80]{\mathop{\bigotimes}\limits_{p\in\PP}(-\startilde_{H_p}^\kappa -)}} & 
          \movevertex(60,0){\mathop{\bigotimes}\limits_{p\in\PP} R\scrS_{G_p\times K_p}} &&
  \movevertex(-90,0){\neaR[30]{\mathop{\bigotimes}\limits_{p\in\PP}(\zeta_{G_p,H_p}\otimes\zeta_{H_p,K_p})}}
              \movevertex(140,0){\nwaR[30]{\mathop{\bigotimes}\limits_{p\in\PP}\zeta_{G_p,K_p}}}&&
  \movevertex(-120,0){\mathop{\bigotimes}\limits_{p\in\PP}\bigl(R\scrS_{G_p\times H_p} \otimes 
             R\scrS_{H_p\times K_p}\bigr)}  &
          \movearrow(10,0){\eaR[150]{\mathop{\bigotimes}\limits_{p\in\PP}(-*_{H_p}^\kappa -)}} & 
          \movevertex(100,0){\mathop{\bigotimes}\limits_{p\in\PP} R\scrS_{G_p\times K_p}} &&
\end{diagram}
\bigskip

\noindent
is commutative. Here 
\begin{equation*}
  \xi_{G,H,K}:\bigl(\bigotimes_{p\in\mathbb{P}}R\scrS_{G_p\times H_p}\bigr)\otimes
  \bigl(\bigotimes_{p\in\mathbb{P}}R\scrS_{H_p\times K_p}\bigr)\myiso 
  \bigotimes_{p\in\PP}\left(R\scrS_{G_p\times H_p}\otimes R\scrS_{H_p\times K_p}\right)
\end{equation*}
denotes the canonical isomorphism.
\end{lemma}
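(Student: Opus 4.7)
The plan is to verify the commutativity by reducing everything to elementary multiplicative decompositions over the primary components of the abelian groups involved, and checking each square on the elementary basis tensors $L \otimes M$ with $L \le G \times H$ and $M \le H \times K$. The auxiliary map $\xi_{G,H,K}$ is a purely formal regrouping of tensor factors and may be suppressed throughout.

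First I would establish the following elementary observations, all immediate for finite abelian groups. Since $(G \times H)_p = G_p \times H_p$, the assignment $L \mapsto (L_p)_{p \in \PP}$ is multiplicative in the $*$-product:
\begin{equation*}
  (L*M)_p = L_p * M_p \quad \text{for every } p \in \PP.
\end{equation*}
Indeed, an element of $(L*M)_p$ is the $p$-part of some $(g_0,k_0) \in L*M$ with witness $h_0 \in H$, and its $p$-part equals $(g_0)_p, (k_0)_p$ with witness $(h_0)_p \in H_p$; the reverse inclusion is trivial. Along the same lines, $p_i(L)_p = p_i(L_p)$ and $k_i(L)_p = k_i(L_p)$ for $i=1,2$, so
\begin{equation*}
  \kappa(L,M) = \frac{|k_2(L) \cap k_1(M)|}{|H|} = \prod_{p \in \PP} \frac{|k_2(L_p) \cap k_1(M_p)|}{|H_p|} = \prod_{p \in \PP} \kappa(L_p, M_p).
\end{equation*}

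Combined with Lemma~\ref{lem p reduction 1}(b), these facts yield the commutativity of the \emph{outer} $(-*_H^\kappa-)$-square in the diagram: on the basis vector $L \otimes M$, both routes produce
\begin{equation*}
  \theta_{G,K}(L *_H^\kappa M) = \kappa(L,M) \cdot \theta_{G,K}(L*M) = \prod_{p\in\PP}\kappa(L_p,M_p) \cdot \bigotimes_{p\in\PP}(L_p*M_p) = \bigotimes_{p\in\PP}\bigl(L_p *_{H_p}^\kappa M_p\bigr).
\end{equation*}

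It remains to deduce the commutativity of the \emph{inner} $(-\startilde_H^\kappa-)$-square. For this I would chain together three pieces: (i) Proposition~\ref{prop zeta}(a), which states that $\zeta_{G,K} \circ (-*_H^\kappa-) = (-\startilde_H^\kappa-) \circ (\zeta_{G,H} \otimes \zeta_{H,K})$, and its analogue in each prime component; (ii) the outer square just established; and (iii) the identity
\begin{equation*}
  \theta_{G,H} \circ \zeta_{G,H} = \Bigl(\bigotimes_{p \in \PP} \zeta_{G_p,H_p}\Bigr) \circ \theta_{G,H},
\end{equation*}
which follows immediately from Lemma~\ref{lem p reduction 1}(a) since
\begin{equation*}
  \theta_{G,H}(\zeta_{G,H}(L)) = \theta_{G,H}\Bigl(\sum_{L' \le L} L'\Bigr) = \bigotimes_{p \in \PP} \sum_{L'_p \le L_p} L'_p = \bigotimes_{p \in \PP} \zeta_{G_p,H_p}(L_p),
\end{equation*}
and analogously for the other $\theta$-maps. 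Since all $\zeta$-maps are $R$-module isomorphisms (Definition~\ref{def alpha,zeta,rho}), one may conjugate the outer square by the $\zeta$-transports on each side to obtain exactly the inner square.

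No genuinely difficult step arises. The only point requiring any verification is $(L*M)_p = L_p*M_p$ for subgroups of products of abelian groups, and this is immediate from the primary decomposition of a finite abelian group together with the element-level definition of $L*M$. Everything else is bookkeeping.
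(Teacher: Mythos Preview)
Your argument is correct and follows essentially the same route as the paper: verify the outer square on basis tensors via the multiplicativity of $\kappa$ over primes together with $(L*M)_p=L_p*M_p$, then deduce the inner square by combining Proposition~\ref{prop zeta}(a) (top and bottom trapezoids), Lemma~\ref{lem p reduction 1}(a) (left and right trapezoids), and the invertibility of the $\zeta$-maps. The only quibble is your citation of Lemma~\ref{lem p reduction 1}(b) for the outer square---that part of the lemma concerns $\widetilde{L}$, whereas what you actually use (and correctly display) is just the definition of $\theta_{G,K}$ on a single subgroup; the computation itself is fine.
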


\begin{proof}
Let $L\le G\times H$ and $M\le H\times K$. 
The two paths in the outer square map 
$L\otimes M$ to the same element since
\begin{equation*}
  \frac{|k_2(L)\cap k_1(M)|}{|H|} 
  = \prod_{p\in\PP} \frac{|(k_2(L)\cap k_1(M))_p|}{|H_p|}
  = \prod_{p\in\PP} \frac{|k_2(L_p)\cap k_1(M_p)|}{|H_p|}\,.
\end{equation*}
Furthermore, the top and bottom rectangles of the 
diagram commute, by Proposition \ref{prop zeta}, and the left and right rectangles of the diagram commute, by
Lemma \ref{lem p reduction 1}(a). Lastly, since the maps $\zeta$ with various indices are isomorphisms, also the inner square commutes.
\end{proof}

\begin{proof} {\em of Theorem~\ref{thm product of hats}}. \quad 
Consider 
the inner square of the diagram in Lemma~\ref{lem p reduction 2}, and 
$\Ltilde\otimes \Mtilde\in R\scrS_{G\times H}\otimes R\scrS_{H\times K}$ in its 
top left corner. Using Lemma~\ref{lem p reduction 1}(b), 
the left-hand arrow in this inner square
 applied to this element yields the tensor product 
of the elements $\widetilde{L_p}\otimes \widetilde{M_p}$. By Proposition~\ref{prop hat}, 
this latter element is mapped under the bottom map to the tensor product 
of the elements $\lambda_p(L,M)\cdot (\widetilde{L_p*M_p})$. Since $L_p*M_p=(L*M)_p$, 
also $\lambda(L,M)\cdot \widetilde{L*M}$ is mapped to this element under $\theta_{G,K}$, 
by Lemma~\ref{lem p reduction 1}. This completes the proof of the theorem.
\end{proof}

Theorem~\ref{thm product of hats} allows us to view the $R$-algebra $\Atilde_R^\scrD$ 
as a twisted category algebra $R_\lambda\catCtilde$: let $\catCtilde=\catCtilde(\scrD)$ denote the 
inverse category associated to $\scrD$ as in 
Definition~\ref{defi group cat}, and
recall that we are assuming $|G|$ 
to be invertible in $R$, for every $G\in\scrD$. 
Recall further that the objects of $\catCtilde$ are pairs $(G,G')$ 
with $G\in\scrD$ and $G'\le G$. Moreover, for objects $(G,G')$ and $(H,H')$ of $\catCtilde$, 
the set of morphisms from $(H,H')$ to $(G,G')$ is given by $\scrP(G'\times H')$, and if 
$(K,K')$ is also an object of $\catCtilde$ and $L\in\scrP(G'\times H')$ and $M\in\scrP(H'\times K')$ 
are morphisms in $\catCtilde$ then their composition is defined by $L*M$. The identity 
morphism of $(G,G')$ is equal to $\Delta(G')$. Recall also that, for a morphism $L$ between $(H,H')$ 
and $(G,G')$, we will often write $\GLH$ to indicate 
that it is a morphism between the objects $(H,H')$ and $(G,G')$. The associativity of 
the $R$-algebra $\Atilde_R^\scrD$ and Theorem~\ref{thm product of hats} imply that 
$(L,M)\mapsto \lambda(L,M)$ defines a $2$-cocycle on $\catC$ with values in $R^\times$ (if also $\varphi(|G|)$ is invertible in $R$ for all $G\in\scrD$), 
and that the maps 
\begin{equation*}
  R\scrS_{G\times H}\to R_{\lambda}\catCtilde\,,\quad 
  \Ltilde\mapsto L\in\Hom_{\catCtilde}\bigl((H,p_2(L)),(G,p_1(L))\bigr)\,,
\end{equation*}
for $G,H\in\scrD$, induce an $R$-algebra isomorphism
\begin{equation}\label{eqn delta}
  \delta\colon \Atilde_R^\scrD\liso R_\lambda\catCtilde
\end{equation}
onto the twisted category algebra $R_\lambda\catCtilde$ of the inverse category $\catCtilde$.

\medskip
Our next goal is to show that $R_\lambda\catCtilde$ is isomorphic to the \lq untwisted\rq \,
category algebra $R\catCtilde$. The next proposition will achieve this by showing 
that $\lambda$ is a coboundary.

\begin{definition}\label{def mu}
For objects $(G,G')$, $(H,H')$ of $\catCtilde$ and a morphism $\GLH \in\scrP(G'\times H')$ 
from $(H,H')$ to $(G,G')$ we set
\begin{equation*}
  \mu(\GLH):=\prod_{p\in\PP} \mu_p(\GLH)\,,
\end{equation*}
where 
\begin{equation*}
  \mu_p(\GLH):=\begin{cases}
    \frac{\varphi(|p_1(L_p)|)}{|H_p|} & \text{if $s(L_p)=1$,}\\
    \frac{|k_1(L_p)|}{|H_p|} & \text{if $s(L_p)>1$.}
  \end{cases}
\end{equation*}
\end{definition}

\begin{proposition}\label{prop cochain mu}
Assume that, for each $G\in\scrD$, 
both $|G|$ and $\varphi(|G|)$ are invertible in $R$.
Let $(G,G'),(H,H'),(K,K')\in\Ob(\catCtilde)$, and let
$\GLH\in\Hom_{\catCtilde}((H,H'),(G,G'))$ and $\HMK\in\Hom_{\catCtilde}((K,K'),(H,H'))$.
Then one has
\begin{equation*}
  \lambda(\GLH,\HMK)=\mu(\GLH)\cdot\mu(\HMK)\cdot\mu(\GLMK)^{-1}\,.
\end{equation*}
Thus the $2$-cocycle $\lambda$ on the category $\catCtilde$ is a $2$-coboundary, and the 
twisted category algebra $R_\lambda\catCtilde$ is isomorphic to the
category algebra $R\catCtilde$, via the isomorphism
\begin{equation*}
  \epsilon\colon  R_\lambda\catCtilde\to R\catCtilde\,,\quad \GLH\mapsto \mu(\GLH)\cdot \GLH\,,
\end{equation*}
for $\GLH\in\Mor(\catCtilde)$.
\end{proposition}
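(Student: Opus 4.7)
The plan is to verify the coboundary identity $\lambda(L,M) = \mu(L)\mu(M)\mu(L*M)^{-1}$ one prime at a time. Since both $\lambda$ and $\mu$ are by definition products over $p\in\PP$, and since for cyclic groups one has the Sylow-compatibility identities $(L*M)_p = L_p*M_p$, $(p_i(L))_p = p_i(L_p)$, and $(k_i(L))_p = k_i(L_p)$ (all of which follow from a routine argument adjusting the intermediate element $h\in H$ in the composition $L*M$ to $p$-power order by raising it to an integer coprime to $p$ and congruent to $1$ modulo suitable orders), it suffices to establish the identity in the case where $G$, $H$, $K$ are all cyclic $p$-groups.

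In that setting I would carry out a case analysis on whether $s(L)$ and $s(M)$ each equal $1$ (equivalently, $L$ or $M$ is a direct product of its projections) or strictly exceed $1$. The key structural input is Lemma~\ref{lem butterfly}(i), which yields
\begin{equation*}
  |k_1(L*M)| = \frac{|k_1(L)|\cdot|k_1(M)|}{|k_2(L)\cap k_1(M)|}\,,\qquad \text{hence}\qquad s(L*M) = s(L)\cdot\frac{|k_2(L)\cap k_1(M)|}{|k_1(M)|}\,.
\end{equation*}
Because the subgroup lattice of a cyclic $p$-group is a chain, one deduces that $s(L*M)=1$ whenever $s(L)=1$ or $s(M)=1$ (so the property of being a direct product propagates through composition), while $s(L*M)>1$ when both $s(L),s(M)>1$. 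In each of the four cases I would then substitute the appropriate branches of $\lambda_p$ and $\mu_p$ and verify the coboundary identity by direct arithmetic.

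I expect the main obstacle to be the mixed case, say $s(L)>1$ and $s(M)=1$. Here the identity reduces to the assertion $|k_1(L)|\varphi(b) = |k_2(L)|\varphi(a)$, where $a=|p_1(L)|$ and $b=|p_2(L)|$. Using $|k_1(L)|/|k_2(L)|=a/b$, this rearranges to $\varphi(a)/a = \varphi(b)/b$; this identity holds precisely because $s(L)>1$ forces both $a$ and $b$ to be $p$-powers strictly greater than $1$, so that $\varphi(p^n)/p^n = 1-1/p$ is independent of $n\geq 1$. The remaining three cases (both $s=1$; both $s>1$, which splits into two subcases depending on whether $k_2(L)\subseteq k_1(M)$ or $k_1(M)\subseteq k_2(L)$) collapse by straightforward cancellations.

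Once the coboundary identity is established, the $R$-module map $\epsilon\colon R_\lambda\catCtilde\to R\catCtilde$, $L\mapsto \mu(L)\cdot L$, is automatically an $R$-algebra isomorphism. The hypotheses that $|G|$ and $\varphi(|G|)$ are invertible in $R$ for every $G\in\scrD$ ensure that $\mu(L)\in R^\times$ for every morphism $L$, so $\epsilon$ is bijective on the distinguished basis. Multiplicativity $\epsilon(L\cdot_\lambda M)=\epsilon(L)\cdot \epsilon(M)$ on composable pairs reads $\lambda(L,M)\mu(L*M)=\mu(L)\mu(M)$, which is exactly the coboundary relation just proved; on non-composable pairs both sides are zero by definition.
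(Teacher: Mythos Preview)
Your proposal is correct and follows essentially the same route as the paper: reduce to a single prime $p$, then carry out the four-way case analysis on whether $s(L_p)$ and $s(M_p)$ equal $1$, with the mixed case $s(L_p)>1$, $s(M_p)=1$ handled via the identity $\varphi(a)/a=\varphi(b)/b$ for $p$-powers $a,b>1$. The paper's proof is organized identically; your explicit formula for $|k_1(L*M)|$ and your observation that $s(L*M)=\min\{s(L),s(M)\}$ in the fourth case match what the paper extracts from Lemma~\ref{lem butterfly}.
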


\begin{proof}
For convenience, set $L:={}_GL_H$ and $M:={}_HM_K$.
By the definitions of $\lambda$ and $\mu$, it suffices to show that
\begin{equation}\label{eqn lambdamu}
\lambda_p(L_p,M_p)\mu_p(L_p)^{-1}\mu_p(M_p)^{-1}
=\mu_p(L_p*M_p)^{-1}\,,
\end{equation}
for all $p\in\mathbb{P}$. To this end, let $p\in\mathbb{P}$, and set
$l_p:=\lambda_p(L_p,M_p)\mu_p(L_p)^{-1}\mu_p(M_p)^{-1}$ and $r_p:=\mu_p(L_p*M_p)^{-1}$.
We distinguish four cases:

\smallskip
(i)\, If $s(L_p)=1=s(M_p)$ then we have $L_p*M_p=p_1(L_p)\times p_2(M_p)$, and thus
\begin{equation*}
  l_p=\frac{|G_p|}{\varphi(|p_1(L_p)|)}=\frac{|G_p|}{\varphi(|p_1(L_p*M_p)|)}=r_p\,.
\end{equation*}

\smallskip
(ii)\, If $s(L_p)=1$ and $s(M_p)>1$ then $L_p*M_p=p_1(L_p)\times p_2(M_p)$, and
\begin{equation*}
  l_p=\frac{|G_p| \cdot |H_p| \cdot |k_1(M_p)|}{|H_p| \cdot \varphi(|p_1(L_p)|) \cdot |k_1(M_p)|} =
  \frac{|G_p|}{\varphi(|p_1(L_p*M_p)|)}=r_p\,.
\end{equation*}

\smallskip
(iii)\, If $s(L_p)>1$ and $s(M_p)=1$ then $L_p*M_p=p_1(L_p)\times p_2(M_p)$, and
\begin{equation*}
  l_p=\frac{|G_p| \cdot |H_p| \cdot |k_2(L_p)|}{|H_p| \cdot \varphi(|p_2(L_p)|) \cdot |k_1(L_p)|}\,.
\end{equation*}
Since $s(L_p)>1$, we also have $|p_i(L_p)|>1$, for $i=1,2$, and hence
$\varphi(|p_i(L_p)|)=|p_i(L_p)|-|p_i(L_p)|/p$, for $i=1,2$. Thus
\begin{equation*}
  \frac{\varphi(|p_2(L_p)|)}{|k_2(L_p)|}=\frac{|p_2(L_p)|}{|k_2(L_p)|}-\frac{|p_2(L_p)|}{p \cdot |k_2(L_p)|}
  =\frac{|p_1(L_p)|}{|k_1(L_p)|}-\frac{|p_1(L_p)|}{p \cdot |k_1(L_p)|}=\frac{\varphi(|p_1(L_p)|)}{|k_1(L_p)|}\,.
\end{equation*}
This shows that
\begin{equation*}
  l_p=\frac{|G_p|}{\varphi(|p_1(L_p)|)}=\frac{|G_p|}{\varphi(|p_1(L_p*M_p)|)}=r_p\,.
\end{equation*}

\smallskip
(iv)\, If $s(L_p)>1$ and $s(M_p)>1$ then, by Lemma \ref{lem butterfly},
$s(L_p*M_p)=\min\{s(L_p),s(M_p)\}>1$, since $p_2(L_p)=p_1(M_p)$.
Thus we have
\begin{equation*}
  l_p=\frac{|G_p| \cdot |H_p| \cdot |k_2(L_p)\cap k_1(M_p)|}{|H_p| \cdot |k_1(M_p)| \cdot |k_1(L_p)|} 
  =\frac{|G_p| \cdot |k_1(L_p)| \cdot |k_1(M_p)|}{|k_1(L_p*M_p)| \cdot |k_1(L_p)| \cdot |k_1(M_p)|} 
  =\frac{|G_p|}{|k_1(L_p*M_p)|}=r_p\,.
\end{equation*}

\smallskip
This settles Equation (\ref{eqn lambdamu}), and the assertion of the 
proposition follows.
\end{proof}

In consequence of Proposition~\ref{prop iso BD AtildeD}, Equation~(\ref{eqn delta}),
Proposition \ref{prop cochain mu},
Theorem \ref{thm matrix}, and Proposition \ref{prop group cat}
we have established the following theorem.

\begin{theorem}\label{thm A matrix dec}
Let $\scrD$ be a finite set of cyclic groups,
let $\catCtilde$ be the finite inverse category associated with $\scrD$ in Definition~\ref{defi group cat}, 
and let $R$ be a commutative ring.
Suppose that, for all $G\in\scrD$, both
$|G|$ and $\varphi(|G|)$ are units in $R$. Then there is a sequence of $R$-algebra
isomorphisms
\begin{equation*}
  \bigoplus_{G,H\in\scrD}RB(G,H)= B^\scrD_R \Ar{\gamma} \Atilde^\scrD_R
  \Ar{\delta} R_\lambda\catCtilde \Ar{\epsilon} R\catCtilde \Ar{\omega} \bigtimes_k \Mat_{n(k)}(R\Aut(C_k))\,;
\end{equation*}
here $k$ varies over all divisors of all group orders $|G|$ ($G\in\scrD$), 
$C_k$ denotes a cyclic group of order $k$, and $n(k)$ the number of
sections of groups in $\scrD$ isomorphic to $C_k$. In particular, if $R$ is a field then
$B^\scrD_R$ is a semisimple $R$-algebra.
\end{theorem}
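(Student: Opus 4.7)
The plan is a routine chaining of isomorphisms already constructed in earlier sections, together with an application of Theorem~\ref{thm matrix} to produce the matrix decomposition. The map $\gamma$ is supplied directly by Proposition~\ref{prop iso BD AtildeD}. For $\delta$, I pass to the basis $\{\Ltilde \mid L\le G\times H,\ G,H\in\scrD\}$ of $\Atilde^\scrD_R$; Theorem~\ref{thm product of hats} establishes the multiplication rule $\Ltilde\startilde^\kappa_H\Mtilde = \lambda(L,M)\cdot\widetilde{L*M}$ (with product zero when $p_2(L)\neq p_1(M)$), which is exactly the defining relation of the twisted category algebra $R_\lambda\catCtilde$ of the finite inverse category $\catCtilde=\catCtilde(\scrD)$ of Definition~\ref{defi group cat}; the map $\Ltilde\mapsto L$, viewed as a morphism of $\catCtilde$ between the appropriate objects, gives the required $R$-algebra isomorphism. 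The isomorphism $\epsilon$ is then Proposition~\ref{prop cochain mu}, which realizes $\lambda$ as the coboundary of $\mu$ and rescales each basis morphism by $\mu$.

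The substantive step is $\omega$. The category $\catCtilde$ is finite because $\scrD$ is finite and each subgroup lattice $\scrS_G$ ($G\in\scrD$) is finite, so Theorem~\ref{thm matrix} applies and yields $R\catCtilde\cong \bigtimes_{e\in E}\Mat_{n(e)}(R\Gamma_e)$, where $E$ is a set of representatives for the equivalence classes of idempotent endomorphisms in $\catCtilde$ and $n(e)$ is the cardinality of the class of $e$. Proposition~\ref{prop group cat}(b,c) tells me that the idempotent endomorphisms in $\catCtilde$ are exactly the elements ${}_G(P,K,\id,P,K)_G$ with $K\unlhd P\le G\in\scrD$, that two such are equivalent iff the quotients $P/K$ are isomorphic as abstract groups, and that the associated $\Gamma_e$ is isomorphic to $\Aut(P/K)$. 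Since every $G\in\scrD$ is cyclic, each subquotient $P/K$ is cyclic, hence the equivalence classes are naturally indexed by the divisors $k$ of the orders $|G|$ ($G\in\scrD$) via $k\leftrightarrow [C_k]$; with this indexing $\Gamma_{e_k}\cong\Aut(C_k)$ and $n(k)$ is precisely the number of triples $(G,P,K)$ with $G\in\scrD$, $K\unlhd P\le G$ and $P/K\cong C_k$, i.e., the number of sections of groups in $\scrD$ isomorphic to $C_k$. Composing $\omega\circ\epsilon\circ\delta\circ\gamma$ gives the asserted chain of $R$-algebra isomorphisms.

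For the final assertion, assume $R$ is a field. For each divisor $k$ appearing above, $|\Aut(C_k)| = \varphi(k)$ divides $\varphi(|G|)$ for some $G\in\scrD$ and is therefore a unit in $R$; Maschke's theorem then implies that $R\Aut(C_k)$ is semisimple, so $B^\scrD_R$ is a finite direct product of matrix algebras over semisimple algebras and hence itself semisimple. I anticipate no genuine obstacle: every one of the four factor isomorphisms is already individually established in the preceding sections, and the only care required is the bookkeeping identification of the equivalence classes of idempotents with the divisors $k$ and of the corresponding $\Gamma_{e_k}$ with $\Aut(C_k)$, both of which are handled cleanly by Proposition~\ref{prop group cat}.
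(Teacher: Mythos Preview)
Your proposal is correct and follows essentially the same route as the paper: the paper simply states that the theorem follows by combining Proposition~\ref{prop iso BD AtildeD}, Equation~(\ref{eqn delta}), Proposition~\ref{prop cochain mu}, Theorem~\ref{thm matrix}, and Proposition~\ref{prop group cat}, which is exactly the chain you spell out. Your added justification of the semisimplicity claim via Maschke's theorem (using that $k\mid |G|$ implies $\varphi(k)\mid\varphi(|G|)$) makes explicit a step the paper leaves implicit.
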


\begin{remark}\label{rem A matrix dec}
Suppose that $R$ is a field.

\smallskip
(a) Specializing to the case where the set $\scrD$ consists of 
one cyclic group $G$ only, Theorem \ref{thm A matrix dec} implies
that the double Burnside algebra $RB(G,G)$ is semisimple, provided
$|G|$ and $\varphi(|G|)$ are non-zero in $R$. Suppose that, whenever $p$ is
a prime divisor of $|G|$, also $p^2$ is a divisor of $|G|$. 
In this case $\varphi(|G|)\in R^\times$ implies $|G|\in R^\times$,
and we recover one half of \cite[Proposition 6.1.7]{BoucSLN}, characterizing the 
fields $R$ and finite groups $G$ such that $RB(G,G)$ is a semisimple $R$-algebra. 
In addition, Theorem~\ref{thm A matrix dec} yields an explicit decomposition. 

\smallskip
(b) Assume now that $R$ has characteristic $0$. Note that the $R$-algebra $R\catCtilde$ and the corresponding isomorphism $\omega$ from Theorem~\ref{thm matrix} are 
defined for an arbitrary finite set $\scrD$ of groups. However, if $\scrD$ does contain a non-cyclic group $G$ then $R\catCtilde$ cannot be isomorphic to $\Atilde^{\scrD}_R$, since otherwise $RB(G,G)$ (which can be considered as a condensed algebra of $\Atilde^{\scrD}_R$) would be semisimple, contradicting Bouc's characterization, cf.~\cite[Proposition~6.1.7]{BoucSLN}. See also \cite{Barker} for related results on the semisimplicity of biset functors in characteristic $0$.
\end{remark}

\begin{remark}\label{rem RC basis}
Recall from Remark \ref{rem omega} the
explicit decomposition of $R\catCtilde$ as a direct sum of two-sided ideals $I_e$
that correspond to the matrix algebras under the above isomorphism. 
We close this section by determining the $R$-basis of $R\catCtilde$ corresponding under $\omega$ to the
obvious $R$-basis in the direct product of matrix rings over group algebras, consisting of matrices that have only one non-zero entry, namely a group element in the group algebra. 
We emphasize that in Remark \ref{rem omega}, which explicitly described the isomorphism $\omega$, $R$ could be any commutative ring; the additional assumptions in Theorem \ref{thm A matrix dec} were only necessary to establish the isomorphism $\epsilon\circ\delta\circ\zeta\circ\alpha\colon B^\scrD_R\cong R \catCtilde$.
For this purpose it is convenient to use a more arithmetical notation for the subgroups $L$ of $G\times H$ in the case that $G$ and $H$ are cyclic groups. 
\end{remark}

\begin{notation}\label{not subgroups G}
Let $G=\langle x \rangle$ and $H=\langle y\rangle$ be cyclic groups with $|G|=n$ and $|H|=m$. Suppose that $a, b,k \in\mathbb{N}$ are
such that $a\mid n$, $b\mid m$, and $k\mid\gcd\{a,b\}$. Then 
$x^{n/a}\langle x^{nk/a}\rangle$ and $y^{m/b}\langle y^{mk/b}\rangle$ 
generate subquotients of order $k$ of $G$ and $H$, respectively.
Moreover, let $i\in\mathbb{Z}$ be such that $\gcd\{k,i\}=1$. Then the map
\begin{equation*}
  \alpha_i:\langle y^{m/b}\rangle/\langle y^{mk/b}\rangle\to \langle x^{n/a}\rangle/\langle x^{nk/a}\rangle\,,\quad 
  y^{m/b}\langle y^{mk/b}\rangle\mapsto x^{in/a}\langle x^{nk/a}\rangle\,,
\end{equation*}
defines a group isomorphism. We denote the group
$(\langle x^{n/a}\rangle,\langle x^{nk/a}\rangle,\alpha_i,\langle y^{m/b}\rangle,\langle y^{mk/b}\rangle)$ of $G\times H$ by 
\begin{equation*}
  {}_G(k;a,i,b)_H\,.
\end{equation*}
Thus, if $L={}_G(k;a,i,b)_H$ then one has $|p_1(L)|=a$, $|k_1(L)|=a/k$, $|p_2(L)|=b$, $|k_2(L)|=b/k$, $s(L)=k$. Clearly, each subgroup $L$ of $G\times H$ can be written in this way with $k$, $a$, and $b$, uniquely determined by $L$. Moreover, ${}_{G}(k;a,i,b)_{H}={}_{G}(k;a,j,b)_{H}$ if and only if $i,j\in\ZZ$ satisfy $i\equiv j\pmod{k}$.

Note that the subgroups $\langle y^{m/b}\rangle$, $\langle y^{mk/b}\rangle$, $\langle x^{n/a}\rangle$, and $\langle x^{nk/a}\rangle$ do not depend on the choice of the generators $x$ and $y$. However, the isomorphism $\alpha_i$ does depend on these choices. Thus, when we will use the notation ${}_G(k;a,i,b)_H$ we assume that the groups $G$ and $H$ are equipped with a fixed choice of generators.
\end{notation}

With this notation the following lemma is an immediate consequence of Lemma \ref{lem butterfly}; we thus omit its proof.

\begin{lemma}\label{lemma star G}
With the notation from \ref{not subgroups G}, let $G$, $H$, and $K$
be cyclic groups,
and let $L\leq G\times H$ and $M\leq H\times K$ with $p_2(L)=p_1(M)$ be given by
\begin{equation*}
  L={}_{G}(k;a,i,b)_{H}\quad \text{and}\quad M={}_{H}(l;b,j,c)_{K}\,,
\end{equation*}
for divisors $a \mid |G|$, $b \mid |H|$, $c\mid |K|$, $k\mid\gcd\{a,b\}$,
$l\mid\gcd\{b,c\}$, and integers $i,j\in\ZZ$ with $\gcd\{i,k\}=1=\gcd\{j,l\}$. Then
\begin{equation*}
  L*M={}_{G}(\gcd\{k,l\};a,ij,c)_{K}\,.
\end{equation*}
\end{lemma}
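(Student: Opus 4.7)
The plan is to apply Lemma~\ref{lem butterfly}(i) directly: since $p_2(L)=p_1(M)$ by hypothesis, the formula in that lemma immediately expresses $L*M$ as a quintuple, and I then just have to translate each of its five coordinates back into the arithmetic notation of \ref{not subgroups G}. So I first write out $L$ and $M$ as quintuples $(P_1,K_1,\phi,P_2,K_2)$ and $(P_3,K_3,\psi,P_4,K_4)$, with $P_1=\langle x^{n/a}\rangle$, $K_1=\langle x^{nk/a}\rangle$, $P_2=P_3=\langle y^{m/b}\rangle$, $K_2=\langle y^{mk/b}\rangle$, $K_3=\langle y^{ml/b}\rangle$, and with $P_4$, $K_4$ the analogous subgroups of a cyclic generator of $K$; here $\phi=\alpha_i$ and $\psi=\alpha_j$.

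The key arithmetic step is computing $K_2K_3=\langle y^{mk/b},y^{ml/b}\rangle=\langle y^{m\gcd\{k,l\}/b}\rangle$, from which I read off $|K_2K_3/K_2|=k/\gcd\{k,l\}$. By Lemma~\ref{lem butterfly}(i) this gives $|K_1'/K_1|=k/\gcd\{k,l\}$, hence $|K_1'|=a/\gcd\{k,l\}$; since $P_1$ is cyclic, $K_1'$ must be its \emph{unique} subgroup of that order, which is $\langle x^{n\gcd\{k,l\}/a}\rangle$. Symmetrically $K_4'=\langle z^{r\gcd\{k,l\}/c}\rangle$. Combined with $p_1(L*M)=P_1$ and $p_2(L*M)=P_4$, this already identifies the first, second, fourth, and fifth entries of the quintuple $L*M$ with those of ${}_G(\gcd\{k,l\};a,ij,c)_K$.

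It remains to check that the middle isomorphism of $L*M$ is $\alpha_{ij}$. By Lemma~\ref{lem butterfly}(i) it equals $\bar{\phi}\circ\bar{\psi}$, where $\bar{\phi}\colon P_2'/K_2'\to P_1'/K_1'$ and $\bar{\psi}\colon P_4'/K_4'\to P_3'/K_3'$ are induced by $\phi=\alpha_i$ and $\psi=\alpha_j$. Evaluating on the distinguished generator $z^{r/c}K_4'$ of $P_4'/K_4'$, one computes $\bar{\psi}(z^{r/c}K_4')=y^{jm/b}K_3'$ (noting $K_3'=K_2'=\langle y^{m\gcd\{k,l\}/b}\rangle$) and then $\bar{\phi}(y^{jm/b}K_3')=x^{ijn/a}K_1'$, which is exactly the effect of $\alpha_{ij}$. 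This yields the claimed equality.

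No genuine obstacle is anticipated; once Lemma~\ref{lem butterfly}(i) has been invoked, everything reduces to elementary bookkeeping inside cyclic groups, and the only point that requires a moment's thought is the observation that in cyclic groups the order determines the subgroup, which lets me avoid a detour through $\gcd(i,a)$ or $\gcd(j,c)$ when identifying $K_1'$ and $K_4'$.
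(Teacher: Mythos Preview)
Your proof is correct and follows exactly the approach the paper intends: the paper itself states that the lemma is an immediate consequence of Lemma~\ref{lem butterfly} and omits the proof entirely. Your argument simply unpacks the details of that application of Lemma~\ref{lem butterfly}(i) in the arithmetic notation of~\ref{not subgroups G}, which is precisely what is required.
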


\begin{remark}
Recall from Proposition \ref{prop group cat}
that the idempotent endomorphisms in $\catCtilde$ are
of the form ${}_G(k;a,1,a)_G$, for $G\in\scrD$, $a\mid |G|$ and $k\mid a$.
Moreover, idempotent endomorphisms ${}_G(k;a,1,a)_G$ and ${}_H(l;b,1,b)_H$
in $\catCtilde$ are equivalent if and only if $l=k$.
\end{remark}

Let $\mu\colon \NN\to\NN$ denote the number-theoretic M\"obius function given by $\mu(n)=0$ if $n$ is divisible by $p^2$ for some prime $p$, and $\mu(n)=(-1)^r$ if $n$ is the product of $r$ pairwise distinct primes. Recall that if one considers the divisors $l$ of a fixed natural number $k$ as a partially ordered set under the divisibility relation then the M\"obius function for this poset, applied to $(l,k)$, is equal to $\mu(k/l)$.

\begin{lemma}\label{lemma I_e}
{\rm (a)}\, For every idempotent endomorphism $e:={}_G(k;a,1,a)_G$ 
in $\catCtilde$, one has
\begin{equation*}
  \{s\in\Mor(\catCtilde)\mid \shat\circ s=e\}=\{{}_H(k;b,i,a)_G\mid H\in\scrD, k\mid b, b\mid |H|,\, \gcd\{i,k\}=1\}\,.
\end{equation*}

\smallskip
{\rm (b)}\, For every $s:={}_G(k;a,i,b)_H\in\Mor(\catCtilde)$,
one has
\begin{equation*}
  \{u\leq s\mid u\in \Mor(\catCtilde)\}=\{{}_G(l;a,i,b)_H\mid l\mid k\}\,.
\end{equation*}
In particular, for $u:={}_G(l;a,i,b)_H$ with $u\le s$ in $\Mor(\catCtilde)$, one has $\mu_{u,s}^{\Mor(\catCtilde)}=\mu(k/l)$.
\end{lemma}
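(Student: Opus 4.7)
The plan is to reduce both parts of the lemma to elementary arithmetic in the divisor lattice of $k$ by exploiting the explicit composition formula of Lemma~\ref{lemma star G} together with the identification of $\shat$ with the opposite subgroup $L^\circ$ from Proposition~\ref{prop group cat}(a) and the description of the idempotent endomorphisms of $\catCtilde$ from Proposition~\ref{prop group cat}(b).

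For part (a), I will start by noting that for any morphism $s$ in $\catCtilde$ corresponding to a subgroup $L_s$, Proposition~\ref{prop group cat}(a) gives $\shat \circ s = L_s^\circ * L_s$, and Lemma~\ref{lem butterfly}(i) applied to $L_s^\circ * L_s$ produces the ``right-hand idempotent'' $(p_2(L_s), k_2(L_s), \id, p_2(L_s), k_2(L_s))$. Setting this equal to $e = {}_G(k;a,1,a)_G$ forces $p_2(L_s) = \langle x^{n/a}\rangle$ (so that the source of $s$ is indeed $(G, \langle x^{n/a}\rangle)$) and $k_2(L_s) = \langle x^{nk/a}\rangle$. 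In the arithmetic notation of~\ref{not subgroups G}, $s$ must therefore be of the form ${}_H(k';b,i,a)_G$ with $k' = k$, while $H \in \scrD$, $b$, and $i$ are unconstrained beyond the standing conditions $k \mid b \mid |H|$ and $\gcd\{i,k\} = 1$. The converse is immediate from the same computation.

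For part (b), I will apply the definition of the partial order on $\Mor(\catCtilde)$ from~\ref{noth inverse cat}(a): $u \le s$ iff $u = s \circ e$ for some idempotent endomorphism $e$ at the source of $s$. The source of $s = {}_G(k;a,i,b)_H$ is $(H, \langle y^{m/b}\rangle)$, and Proposition~\ref{prop group cat}(b) identifies the idempotents of $\End_{\catCtilde}((H, \langle y^{m/b}\rangle))$ as the family $\{{}_H(l;b,1,b)_H : l \mid b\}$. Lemma~\ref{lemma star G} then yields $s \circ {}_H(l;b,1,b)_H = {}_G(\gcd\{k,l\};a,i,b)_H$, and as $l$ runs over divisors of $b$ the value $\gcd\{k,l\}$ ranges precisely over divisors of $k$ (using $k \mid b$, which is forced by $k \mid \gcd\{a,b\}$), yielding exactly the claimed set.

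For the M\"obius-function statement, the same composition calculation shows that the interval $\{u \in \Mor(\catCtilde) : u \le s\}$, equipped with the induced order, is order-isomorphic to the divisor lattice of $k$: the relation ${}_G(l;a,i,b)_H \le {}_G(l';a,i,b)_H$ holds iff there exists $l'' \mid b$ with $\gcd\{l',l''\} = l$, equivalently iff $l \mid l'$. The classical identity that the M\"obius function of the divisor lattice of $k$ evaluated at $(l, k)$ is the number-theoretic $\mu(k/l)$ then completes the proof. I do not anticipate any serious obstacle here: the only real care required is bookkeeping when translating between the quintuple description of subgroups, the arithmetic notation ${}_G(k;a,i,b)_H$, and the categorical structure of $\catCtilde$, all of which become routine once Lemma~\ref{lemma star G} is invoked.
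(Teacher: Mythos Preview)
Your proposal is correct and follows essentially the same route as the paper, which simply states that both parts are easy consequences of \ref{noth inverse cat}, Proposition~\ref{prop group cat}, and Lemma~\ref{lemma star G}; you have supplied precisely the details implicit in that sentence. One very minor remark: in part~(b) you should note that morphisms $u$ with $u\le s$ automatically have the same source and target as $s$, so they are already of the form ${}_G(l';a,j,b)_H$, and your computation $s\circ {}_H(l'';b,1,b)_H = {}_G(\gcd\{k,l''\};a,i,b)_H$ shows the index $j$ must coincide with $i$ modulo $l'$; this is implicit in what you wrote but worth making explicit.
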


\begin{proof}
Both parts of the lemma are easy 
consequences of \ref{noth inverse cat}, Proposition \ref{prop group cat},
and Lemma \ref{lemma star G}. 
\end{proof}

\begin{notation}\label{not isoclasses idempt}
Suppose that $k\in\mathbb{N}$ is such that $k$ divides $|G|$, for some $G\in\scrD$.
Let $\{e_1,\ldots,e_{n(k)}\}$ be the equivalence class of the idempotent
endomorphism ${}_G(k;k,1,k)_G$ in $\catCtilde$. Thus, for each $i=1,\ldots,n(k)$, one has a pair $(G_i,a_i)$, where $G_i\in\scrD$ and $a_i\in\NN$ such that $a_i$ divides $|G_i|$, $k$ divides $a_i$, and
$e_i={}_{G_i}(k;a_i,1,a_i)_{G_i}$.
\end{notation}

With this notation, the following result is now a consequence of Lemma \ref{lemma I_e} and Remark \ref{rem omega}. It makes the map $\omega$ in Theorem~\ref{thm A matrix dec} as explicit as possible.

\begin{corollary}\label{cor ideals}
Let $\scrD$ be a finite set of cyclic groups, let $\catCtilde$ be
the finite inverse category associated with $\scrD$ in Definition~\ref{defi group cat}, and let
$R$ be any commutative ring. Moreover, let $k$ be a divisor of the order of some $G\in\scrD$ and assume the notation from \ref{not isoclasses idempt}.
For $i,j\in\{1,\ldots,n(k)\}$, and for an integer $t$ with $\gcd\{t,k\}=1$, set
\begin{equation*}
  b(k; i,t,j):=\sum_{l\mid k} \mu(k/l)\cdot {}_{G_i}(l;a_i,t,a_j)_{G_j}\in R\catCtilde\,.
\end{equation*}
Then $\omega(b(k;i,t,j))$ is the standard basis element in $\bigtimes_k \Mat_{n(k)}(R\Aut(C_k))$ that has zero entry everywhere except in the $k$-th component, and whose $k$-th component is equal to the matrix whose only non-zero entry is located in the $(i,j)$-th position and is equal to the element in $\Aut(C_k)$ that corresponds to $t$. In particular, the central idempotent $e_k$ of $R\catCtilde$, corresponding under $\omega$ to the tuple consisting of the identity matrix in the $k$-th component and the zero matrix in all other components, is equal to
\begin{equation*}
  \sum_{i=1}^{n(k)}\sum_{l\mid k}\mu(k/l) \cdot  {}_{G_i}(l;a_i,1,a_i)_{G_i}\,.
\end{equation*}
\end{corollary}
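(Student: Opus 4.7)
The plan is to apply the explicit formula for $\omega$ in Remark~\ref{rem omega}(a) directly to the basis element $b(k;i,t,j)$, after identifying it with the canonical basis vector $\underline{s}$ attached to a suitable morphism $s$ of $\catCtilde$.

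Set $s := {}_{G_i}(k;a_i,t,a_j)_{G_j}$, which is a morphism from $(G_j,a_j)$ to $(G_i,a_i)$ in $\catCtilde$. By Lemma~\ref{lemma I_e}(b), the set of $u \in \Mor(\catCtilde)$ satisfying $u \le s$ equals $\{{}_{G_i}(l;a_i,t,a_j)_{G_j} : l \mid k\}$, and $\mu_{u,s}^{\Mor(\catCtilde)} = \mu(k/l)$ for each such $u$. Consequently, by the definition of $\underline{s}$ in \ref{noth inverse cat}(a),
$$
\underline{s} = \sum_{l \mid k} \mu(k/l) \cdot {}_{G_i}(l;a_i,t,a_j)_{G_j} = b(k;i,t,j)\,.
$$

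To compute $\omega(\underline{s})$ through Remark~\ref{rem omega}(a), first note that $\hat{s} = {}_{G_j}(k;a_j,t',a_i)_{G_i}$ with $tt' \equiv 1 \pmod k$, by Proposition~\ref{prop group cat}(a). Lemma~\ref{lemma star G} then gives $\hat{s} \circ s = e_j$ and $s \circ \hat{s} = e_i$, so $\hat{s} \circ s$ is equivalent to $e = e_1$, and $\omega(\underline{s})$ is supported in the $k$-component of $\bigtimes_k \Mat_{n(k)}(R\Aut(C_k))$, with its unique nonzero entry in row $i$ and column $j$. For the reference morphisms required by Remark~\ref{rem omega}(a), choose $s_m := {}_{G_m}(k;a_m,1,a_1)_{G_1}$ for $m \in \{1,\ldots,n(k)\}$; a direct check using Lemma~\ref{lemma star G} confirms $\hat{s}_m \circ s_m = e$ and $s_m \circ \hat{s}_m = e_m$, so this choice is admissible. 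Another application of Lemma~\ref{lemma star G} identifies the group element sitting at the $(i,j)$-entry as ${}_{G_1}(k;a_1,t,a_1)_{G_1}$, which under the canonical identification $\Gamma_e \cong \Aut(C_k)$ from Proposition~\ref{prop group cat}(c) corresponds to the automorphism labelled by $t$. This establishes the first assertion.

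The formula for the central idempotent then follows from Remark~\ref{rem omega}(b), which gives the central idempotent of $R\catCtilde$ corresponding to the identity matrix in the $k$-component as $\tilde{e}_k = \sum_{i=1}^{n(k)} \underline{e_i}$. Since $e_i = {}_{G_i}(k;a_i,1,a_i)_{G_i}$, the identification $\underline{s} = b(k;i,t,j)$ specialised to $j = i$ and $t = 1$ yields $\underline{e_i} = b(k;i,1,i)$, and expanding each of these according to the definition of $b$ reproduces precisely the advertised double sum. No serious obstacle is anticipated; the argument is a routine unwinding of definitions, provided one keeps careful track of sources and targets in $\catCtilde$, which is handled uniformly by Lemma~\ref{lemma star G}.
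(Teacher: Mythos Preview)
Your proposal is correct and follows essentially the same approach as the paper, which merely states that the corollary is a consequence of Lemma~\ref{lemma I_e} and Remark~\ref{rem omega}. You have simply made explicit the identification $b(k;i,t,j)=\underline{s}$ via Lemma~\ref{lemma I_e}(b) and then traced through the description of $\omega$ in Remark~\ref{rem omega}(a) with a concrete choice of reference morphisms $s_m$, which is exactly what the paper intends.
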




\begin{thebibliography}{00}
\bibitem[Ba]{Barker} {\sc L.~Barker:} 
         Rhetorical biset functors, rational $p$-biset functors and their semisimplicity in characteristic zero. 
         J.\ Algebra {\bf 319} (2008), 3810--3853.

\bibitem[BD]{BD11} {\sc R.~Boltje, S.~Danz:} 
      A ghost ring for the left-free double Burnside ring and an application to fusion systems.
      Adv. Math. \textbf{229} (2012), 1688--1733.

\bibitem[BK]{BK} {\sc R.~Boltje, B. K\"ulshammer:} Central idempotents in double Burnside rings, in preparation.

\bibitem[Bc1]{BoucJA} {\sc S.~Bouc:} 
            Foncteurs d'ensembles munis d'une double action. 
            J.\ Algebra~{\bf 183} (1996), 664--736.

\bibitem[Bc2]{Bouc2007} {\sc S.~Bouc:} 
            The functor of units of Burnside rings for $p$-groups.  
            Comment. Math. Helv.  {\bf 82}  (2007), 583--615. 

\bibitem[Bc3]{BoucSLN} {\sc S.~Bouc:} Biset functors for finite groups.
Lecture Notes in Mathematics, 1990. Springer-Verlag, Berlin, 2010.

\bibitem[BST]{BST} {\sc S.~Bouc, R.~Stancu, J.~Th\'evenaz:} 
         Simple biset functors and double Burnside rings. Preprint 2012. 
         arXiv:1203.0195. 

\bibitem[BT1]{BoucThevenaz00} {\sc S.~Bouc, J.~Th\'evenaz:} 
          The group of endo-permutation modules. 
          Invent. Math. {\bf 139} (2000), 275--349.

\bibitem[BT2]{BoucThevenaz2010} {\sc S.~Bouc, J.~Th\'evenaz:} Stabilizing bisets. 
Adv. Math. {\bf 229} (2012), 1610--1639.

\bibitem[BLO]{BLO1} {\sc C.~Broto, R.~Levi, B.~Oliver:} 
     The homotopy theory of fusion systems.
      J.\ Amer.\ Math.\ Soc.~{\bf 16} (2003), 779--856.

\bibitem[D]{D} {\sc A.~W.~M. Dress:} 
      A characterization of solvable groups. 
      Math. Z. {\bf 110} (1969), 213--217.

\bibitem[Gr1]{Gr1} {\sc J.~A.~Green:} On the structure of semigroups. Ann.
of Math. (2) {\bf 54} (1951), 163--172.

\bibitem[Gr2]{Gr2} {\sc J.~A.~Green:} Polynomial representations of
$\mathrm{GL}_n$. Second corrected and augmented edition.
 With an appendix on Schensted correspondence and Littelmann paths by K.
Erdmann, Green and M. Schocker.
 Lecture Notes in Mathematics, 830. Springer-Verlag, Berlin, 2007.

\bibitem[Hup]{Hupp} {\sc B.~Huppert:} Endliche Gruppen I. Die Grundlehren
der Mathematischen Wissenschaften,
 Band 134. Springer-Verlag, Berlin-New York, 1967.

\bibitem[Ka]{Ka} {\sc J. Kastl:} Inverse categories. in: Algebraische
Modelle, Kategorien und Gruppoide (ed. H.-J. Hoehnke),
 Akademie-Verlag, Berlin (1979), 51--60.

\bibitem[L]{L} {\sc M.~Linckelmann:} On inverse categories and transfer in
cohomology. to appear in
 Skye conference proceedings.

\bibitem[LS]{LS} {\sc M.~Linckelmann, M.~Stolorz:} 
   On simple modules over twisted finite category algebras. 
   Proc.\ Amer.\ Math.\ Soc.\~140 (2012), 3725--3737.


\bibitem[MP]{MartinoPriddy} {\sc J.~Martino, S.~Priddy:} 
    Stable homotopy classification of $BG^{\wedge}_p$. 
    Topology~{\bf 34} (1995), 633--649.

\bibitem[RS]{RS} {\sc K.~Ragnarsson, R.~Stancu:} Saturated fusion systems as idempotents in the double Burnside ring. Preprint. arXiv:0911.0085.


\bibitem[St]{St} {\sc B.~Steinberg:} M\"obius functions and semigroup
representation
theory. II. Character formulas and multiplicities. Adv. Math. {\bf 217}
(2008), 1521--1557.

\bibitem[T]{Thevenaz} {\sc J.~Th\'evenaz:} Endo-permutation modules, a guided tour. In: Group representation theory, 115--147, EPFL Press, Lausanne, 2007.

\bibitem[W]{W} {\sc P.~Webb:} Stratifications and Mackey functors. II:
globally defined Mackey functors.
  J. K-Theory {\bf 6} (2010), 99--170.

\bibitem[Y]{Yoshida} {\sc T.~Yoshida:} 
    On the unit groups of Burnside rings.  
    J. Math. Soc. Japan {\bf 42} (1990), 31--64.


\end{thebibliography}
\end{document}